\numberwithin{equation}{section}
\theoremstyle{plain}
\newtheorem{main}{Theorem}
\newtheorem{mcor}[main]{Corollary}
\newtheorem{theorem}{Theorem}[section]
\newtheorem{lemma}[theorem]{Lemma}
\newtheorem{proposition}[theorem]{Proposition}
\theoremstyle{definition}
\newtheorem{definition}[theorem]{Definition}
\newtheorem{notation}[theorem]{Notation}
\newtheorem{remark}[theorem]{Remark}
\theoremstyle{plain}
\newtheorem{thm}{Theorem}[section]
\newtheorem{prop}[thm]{Proposition}
\newtheorem{lem}[thm]{Lemma}
\theoremstyle{definition}
\theoremstyle{remark}
\newenvironment{Hoff_Commands}{

\newcommand{\id}{{\rm id}}
\newcommand{\<}{\langle}
\renewcommand{\>}{\rangle}
\newcommand{\textt}[1]{\quad\text{##1}\quad}
\newcommand{\y}{{\boldsymbol y}}
\renewcommand{\R}{\mc{R}}
\newcommand{\ii}{{\bf i}}
\newcommand{\CC}{\mathbb{C}}
\newcommand{\deltab}{{\boldsymbol\delta}}
\renewcommand{\E}{\text{E}}
\newcommand{\FF}{\mathbb{F}}
\renewcommand{\H}{\mc{H}}
\newcommand{\ten}{\otimes}
\renewcommand{\U}{\mc{U}}

\newcommand{\on}[1]{\stackrel{##1}{\curvearrowright}}
\newcommand{\ZZ}{\mathbb{Z}}
}{}
    \def\E{{\mathbb{E}}}   \def\H{{\mathbb{H}}}      \def\N{{\mathbb{N}}}    \def\R{{\mathbb{R}}}   \def\U{{\mathbb{U}}}     \def\Z{{\mathbb{Z}}}
  \def\cC{{\mathcal{C}}}   \def\cF{{\mathcal{F}}} \def\cG{{\mathcal{G}}}       \def\cN{{\mathcal{N}}}  \def\cP{{\mathcal{P}}}  \def\cR{{\mathcal{R}}} \def\cS{{\mathcal{S}}}       
                       \def\tX{{\tilde{X}}} \def\tY{{\tilde{Y}}} 
      \def\tPsi{{\widetilde{\Psi}}}       
 \def\tmu{{\widetilde{\mu}}}\def\tnu{{\widetilde{\nu}}}        \def\ttheta{{\widetilde{\theta}}}     
\newcommand\Aut{\operatorname{Aut}}
\newcommand\dom{\operatorname{dom}}
\newcommand\Prob{\operatorname{Prob}}
\def\cc{{\curvearrowright}}
\newcommand{\resto}{\upharpoonright}
\def\tcR{\tilde{\cR}}
\def\tcS{\tilde{\cS}}
\begin{document}

\title[Extensions of non-amenable equivalence relations]{von Neumann's problem and extensions of \\ non-amenable equivalence relations}

\author[Lewis Bowen]{Lewis Bowen}
\thanks{L.B. was partially supported by NSF grant DMS-1500389 and NSF CAREER Award DMS-0954606}
\address{Mathematics Department; University of Texas at Austin.}
\email{lpbowen@math.utexas.edu}

\author[Daniel Hoff]{Daniel Hoff}
\thanks{D.H. was partially supported by the NSF-GRFP under Grant No. DGE-1144086}
\address{Mathematics Department; University of California, San Diego, CA 90095-1555 (United States).}
\email{d1hoff@ucsd.edu}

\author[Adrian Ioana]{Adrian Ioana}
\thanks{A.I. was partially supported by NSF  Grant DMS 1161047, NSF CAREER Award DMS \#1253402, and a Sloan Foundation Fellowship}
\address{Mathematics Department; University of California, San Diego, CA 90095-1555 (United States).}
\email{aioana@ucsd.edu}

\begin{abstract} 

The goals of this paper are twofold. First, we generalize the result of Gaboriau and Lyons \cite{GL07} to the setting of von Neumann's problem for equivalence relations, proving that for any non-amenable ergodic probability measure preserving (pmp) equivalence relation $\mathcal{R}$, the Bernoulli extension over a non-atomic base space $(K, \kappa)$ contains the orbit equivalence relation of a free ergodic pmp action of $\mathbb{F}_2$. Moreover, we provide conditions which imply that this holds for any non-trivial probability space $K$.
Second, we use this result to prove that any non-amenable unimodular locally compact second countable group admits uncountably many free ergodic pmp actions which are pairwise not von Neumann equivalent (hence, pairwise not orbit equivalent).



 \end{abstract}
\maketitle

\section {Introduction and statement of main results}
\subsection*{Background}

The notion of amenability for groups was introduced by J. von Neumann in order to explain the Banach-Tarski paradox \cite{vN29}.  He showed that any countable group that contains the free group $\mathbb F_2$ on two generators is non-amenable. The question of whether any non-amenable group contains $\mathbb F_2$, became known as von Neumann's problem, and was eventually settled in the negative by A. Ol'shanskii \cite{Ol80}.

Remarkably, D. Gaboriau and R. Lyons  proved  that  von Neumann's problem has a positive solution in the context of measurable group theory \cite{GL07} (see also the survey \cite{Ho11}). More precisely, they showed that any countable non-amenable group $\Gamma$ admits $\mathbb F_2$ as a ``measurable subgroup": there exists a free ergodic probability measure preserving (pmp) action $\Gamma \curvearrowright (X, \mu)$ whose associated orbit equivalence relation $\mathcal{R}(\Gamma \curvearrowright X)$ contains the orbit equivalence relation of a free ergodic pmp action $\mathbb{F}_2 \curvearrowright (X, \mu)$. Moreover, the Bernoulli action of $\Gamma$ on $([0,1]^{\Gamma},\lambda^{\Gamma})$ has this property, where $\lambda$ denotes the Lebesgue measure on $[0,1]$. 

Our first goal is to establish the following strengthening of this result:

%

\begin{main}\label{main} Let $\mathcal R$ be an ergodic non-amenable countable pmp equivalence relation, and let $\mathcal{R}_K$ on $(X_K,\mu_{\kappa})$ denote its Bernoulli extension with base space $(K,\kappa)$.

If $(K,\kappa)$ is non-atomic, then there exists a free ergodic pmp action $\mathbb F_2\curvearrowright (X_K,\mu_{\kappa})$ such that $\mathcal R(\mathbb F_2\curvearrowright~X_K)~\le~\mathcal R_K,$ almost everywhere. 
Moreover, this conclusion holds for any non-trivial choice of $(K,\kappa)$, provided that $\mathcal R$ has an ergodic subequivalence relation of infinite index which is non-amenable or normal, or that $\mathcal R$ has an infinite fundamental group.
\end{main}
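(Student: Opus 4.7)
My plan is to adapt the strategy of Gaboriau--Lyons \cite{GL07} from Bernoulli actions of groups to Bernoulli extensions of pmp equivalence relations. The core idea is: fix a countable graphing $\Phi$ of $\cR$, lift it to a graphing $\tilde\Phi$ of $\cR_K$ on $(X_K,\mu_\kappa)$, and perform Bernoulli bond percolation on $\tilde\Phi$ using the randomness built into the Bernoulli extension $\cR_K$. The resulting random subgraphing generates a measurable subequivalence relation of $\cR_K$ whose classes are the percolation clusters, and extracting a free ergodic pmp $\F_2$-action from these clusters is the main goal.

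Assume first that $(K,\kappa)$ is non-atomic. Then the fibers of $\cR_K \to \cR$ carry enough randomness to realize independent Bernoulli$(p)$ variables indexed by the edges of $\tilde\Phi$ for any parameter $p \in (0,1)$. Since $\cR$ (and hence $\cR_K$) is non-amenable, the pmp analog of the Benjamini--Lyons--Peres--Schramm theory should yield a non-uniqueness phase: for $p$ in some open interval, the percolation configuration on $\tilde\Phi$ has infinitely many infinite clusters almost everywhere. Adapting the Lyons--Schramm indistinguishability theorem to pmp equivalence relations then produces, after restricting to spanning subforests of the infinite clusters, a non-amenable treeable subequivalence relation of $\cR_K$; by Hjorth's theorem this contains the orbit equivalence relation of a free ergodic pmp action of $\F_2$. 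An ergodicity-boosting step using the ergodicity of $\cR_K$ finally gives such an action defined on all of $X_K$.

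For the case of an arbitrary non-trivial $(K,\kappa)$, the additional hypotheses are used to effectively simulate non-atomic randomness within $\cR_K$. If $\cR$ admits an infinite-index ergodic subequivalence relation $\cS$ that is non-amenable or normal, I would assemble the Bernoulli values along the infinitely many $\cS$-cosets inside each $\cR$-class to manufacture effectively non-atomic samples per orbit; the case of infinite fundamental group should be handled by an analogous amplification using a self-equivalence of a reduction of $\cR$. The main obstacle throughout is the percolation-theoretic input: establishing the existence of a non-uniqueness phase and the indistinguishability of infinite clusters for Bernoulli percolation on a graphing of a non-amenable pmp equivalence relation, since the classical arguments depend essentially on the vertex-transitivity of Cayley graphs and must be replaced by arguments adapted to the random environment carried by $\cR_K$. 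A secondary technical challenge is the measurable-selection step required to transfer the output of these percolation arguments into a genuine free ergodic pmp $\F_2$-action on $(X_K,\mu_\kappa)$.
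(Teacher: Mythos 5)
Your outline for the main (non-atomic) assertion follows the same route as the paper: lift a finite generating graphing of a non-amenable ergodic subrelation of $\cR$ with spectral gap, perform Bernoulli bond percolation via the randomness in the extension, establish a non-uniqueness phase of infinitely many infinite clusters for a suitable retention parameter, use indistinguishability (adapted to pmp equivalence relations via the Aldous--Lyons unimodular random network formalism) to obtain ergodicity of the cluster subrelation on its infinite locus, show its normalized cost exceeds one, and apply Hjorth's cost-attainability theorem together with an induction argument to get a free ergodic pmp $\F_2$-action on all of $X_K$. The paper uses the cost argument and Hjorth directly rather than extracting an explicit spanning subforest, but that is a minor stylistic difference.

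For the ``moreover'' clause your idea --- amplify the effective base entropy via an infinite-index subrelation or a compression --- is the correct intuition, but as stated it has a genuine gap. What you need, and what the paper proves as nontrivial theorems, is a precise identification of what the lift of a Bernoulli extension to a subrelation or to a compression actually is: the lift of $\cR_K$ along $\cS\le\cR$ is isomorphic (as an extension of $\cS$) to a Bernoulli extension with base entropy $[\cR:\cS]\cdot H(K,\kappa)$, and the restriction $\cR_K\resto\tY$ is a Bernoulli extension of $\cR\resto Y$ with base entropy $H(K,\kappa)/\mu(Y)$. The compression statement rests on Thouvenot's relative isomorphism theorem, and the fact that base entropy determines the Bernoulli extension up to isomorphism of extensions rests on Ornstein's theorem; neither is automatic, and ``assembling Bernoulli values along the cosets'' does not by itself produce an isomorphism of extensions. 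The paper packages the needed entropy threshold as an invariant $\beta(\cR)$ and proves the inequalities $\beta(\cR)\le\beta(\cS)/[\cR:\cS]$ and $\beta(\cR)\le\mu(Y)\,\beta(\cR\resto Y)$, which is what actually forces $\beta(\cR)=0$ under the stated hypotheses. Finally, your sketch quietly assumes the infinite-index subrelation $\cS$ is non-amenable so the first part applies to it; the normal alternative, in which $\cN\vartriangleleft\cR$ may well be amenable while $\cR/\cN$ is non-amenable, requires a separate argument --- the paper passes to finite subrelations $\cF\le\cN$ and uses Kesten's theorem for the countable group $\cR/\cN$ to obtain a uniform operator-norm bound on lifts --- which your plan does not address.
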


\begin{remark} 
Assume that $(K,\kappa)=([0,1],\lambda)$.
If $\mathcal R$ is the orbit equivalence relation of a free pmp action $\Gamma\curvearrowright X$, then  $\mathcal R_K$ is isomorphic to the orbit equivalence relation of the product action $\Gamma\curvearrowright X\times K^{\Gamma}$ (Proposition \ref{exam}).  In this case, Theorem \ref{main} is a consequence of \cite[Theorem 1]{GL07}. Indeed, since $\mathcal R(\Gamma\curvearrowright K^{\Gamma})$ contains the orbit equivalence relation of a free ergodic pmp action of $\mathbb F_2$ by \cite[Theorem 1]{GL07}, it follows that the same is true for $\mathcal R_K$. However, Theorem \ref{main} is new whenever $\mathcal R$ does not arise as the orbit equivalence relation of a free pmp action of a countable group (see \cite{Fu99} for examples of such $\mathcal R$).
Also, note that if $\mathcal R=\mathcal R(\Gamma\curvearrowright [0,1]^{\Gamma})$, then $\mathcal R_K$ is isomorphic to $\mathcal R$.
Theorem \ref{main} implies that $\mathcal R$ contains the orbits of a free ergodic pmp action of $\mathbb F_2$,  for any non-amenable $\Gamma$, and therefore recovers \cite[Theorem 1]{GL07}. 
\end{remark}

\begin{remark} 
At the end of \cite{GL07}, the authors posed the following analogue of von Neumann's problem for equivalence relations: does every ergodic non-amenable countable pmp equivalence relation $\mathcal R$ contain $\mathcal{R}(\mathbb{F}_2 \curvearrowright X)$ for some free ergodic pmp action of $\mathbb F_2$? The main result of \cite{GL07} shows that this is indeed the case if $\mathcal R$ arises from the Bernoulli action with base $([0,1],\lambda)$ of a non-amenable countable group. Theorem \ref{main} shows that, more generally, this holds for the Bernoulli extension with base $([0,1],\lambda)$ of any ergodic non-amenable countable pmp equivalence relation.
\end{remark}

We turn now to the second main result of this paper and to the history motivating it. 
In the early 1980s, D. Ornstein and B. Weiss \cite{OW80}, extending work of H. Dye \cite{Dy59}, showed that any two ergodic pmp actions of countable infinite amenable groups are orbit equivalent. Moreover, as a consequence of \cite{CFW81}, all free properly ergodic pmp actions of a unimodular amenable lcsc group $G$ are pairwise orbit equivalent.
On the other hand, over the next two decades, several families of non-amenable countable groups, including  property (T) groups \cite{Hj02} and non-abelian free groups \cite{GP03}, were shown to admit uncountably many actions which are pairwise not orbit equivalent. 

Unifying many of these results, it was shown in \cite{Io06} that any countable group $\Gamma$ containing a copy of $\mathbb{F}_2$ has uncountably many free ergodic actions which are pairwise not orbit equivalent. Thus nearly three decades after the solution to von Neumann's problem \cite{Ol80}, the relationship between general non-amenable groups and the prototypical example of $\mathbb{F}_2$ came again into focus. Gaboriau and Lyons' result in \cite{GL07} was followed shortly by \cite{Ep07}, in which I. Epstein combined \cite{GL07} with the methods of \cite{Io06} via a new co-induction construction for group actions, proving that any countable non-amenable group $\Gamma$ admits uncountably many non orbit equivalent actions, and settling the question in the case of countable groups.

%


Much less has been established in the case of unimodular lcsc groups $G$. It was shown in \cite[Example 5.2.13]{Zi84} (see also \cite[Corollary A. 10]{GG88}) that any connected semisimple Lie group $G$ with $\mathbb R$-rank$(G)\geq 2$, finite center, and no compact factors has uncountably many mutually non orbit equivalent free ergodic pmp actions. By combining \cite{Ep07} with an induction argument it follows that, more generally, any unimodular non-amenable lcsc group $G$ possessing a lattice has uncountably many non orbit equivalent free ergodic pmp actions. 
However, in spite of these advances, the situation for general non-amenable unimodular lcsc groups $G$ remained open. 

Making use of Theorem \ref{main}, we are able to settle this question: 

\begin{main}\label{lc}
Any unimodular non-amenable lcsc group $G$ admits uncountably many  free ergodic pmp actions which are pairwise not von Neumann equivalent (hence, pairwise not orbit equivalent). 
\end{main}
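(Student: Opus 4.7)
The plan is to combine Theorem \ref{main} with a co-induction construction at the level of equivalence relations, in the spirit of Epstein \cite{Ep07}, and then apply rigidity techniques following \cite{Io06} to distinguish the resulting $G$-actions. The bridge between lcsc groups and countable pmp equivalence relations will be provided by the cross-section construction.

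First I would fix any free ergodic pmp action $G \curvearrowright (X, \mu)$ — for instance a Bernoulli-type action — and choose a cross-section $Y \subset X$. The induced countable pmp equivalence relation $\mathcal{R}_Y$ is ergodic and, since $G$ is non-amenable, non-amenable. Applying Theorem \ref{main} to $\mathcal{R}_Y$ with atomless base $(K,\kappa) = ([0,1],\lambda)$ yields a free ergodic pmp action $\mathbb{F}_2 \curvearrowright (Y_K, \mu_\kappa)$ whose orbit equivalence relation sits inside the Bernoulli extension $(\mathcal{R}_Y)_\kappa$. A routine identification (compare Proposition \ref{exam}) shows that $(\mathcal{R}_Y)_\kappa$ is itself the cross-section equivalence relation of a ``fibered Bernoulli extension'' $G \curvearrowright (\widetilde X, \widetilde \mu)$ of the original $G$-action. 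Consequently, $\mathbb{F}_2$ sits measurably inside the orbit equivalence relation of a genuine free ergodic pmp $G$-action.

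Next I would perform Epstein-style co-induction starting from this measurable $\mathbb{F}_2$: for each free ergodic pmp action $\alpha : \mathbb{F}_2 \curvearrowright (Z, \eta)$, build a free ergodic pmp action $G \curvearrowright (\widetilde Z_\alpha, \widetilde \eta_\alpha)$ whose orbit structure faithfully records $\alpha$. Choosing an uncountable family $(\alpha_s)_{s}$ of pairwise non-vN-equivalent free ergodic pmp $\mathbb{F}_2$-actions — for example, the actions built in \cite{Io06} using property-(T) rigidity — produces an uncountable family of candidate $G$-actions $G \curvearrowright \widetilde Z_{\alpha_s}$.

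The remaining, and most delicate, step is to show that these co-induced $G$-actions are still pairwise non-vN-equivalent, i.e.\ that any isomorphism $L^\infty(\widetilde Z_{\alpha_s}) \rtimes G \cong L^\infty(\widetilde Z_{\alpha_t}) \rtimes G$ descends to a vN-equivalence between $\alpha_s$ and $\alpha_t$. This is the main obstacle. In the countable-group setting of \cite{Ep07} the descent uses a genuine embedded $\mathbb{F}_2$-subgroup supplied by \cite{GL07}; here only a measurable $\mathbb{F}_2$-subrelation, sitting inside a Bernoulli extension of a cross-section, is available. One must therefore show that the ``noise'' contributed by the Bernoulli extension and the cross-section is quotienable or absorbable, via a Popa-type deformation/rigidity analysis of the co-induced crossed product. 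This is precisely where Theorem \ref{main} — rather than merely the original result of \cite{GL07} — enters decisively: the Bernoulli-extension structure it delivers is rigid enough to survive the descent and yield the sought-for pairwise non-vN-equivalence of the $G$-actions, completing the proof.
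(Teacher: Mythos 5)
Your proposal captures the right ingredients---cross sections, Theorem~\ref{main} to plant a measurable $\mathbb{F}_2$, Epstein-style co-induction, and rigidity---but it leaves the decisive step open. You explicitly flag the descent from an isomorphism $L^\infty(\widetilde Z_{\alpha_s})\rtimes G\cong L^\infty(\widetilde Z_{\alpha_t})\rtimes G$ down to a distinction between $\alpha_s$ and $\alpha_t$ as ``the main obstacle'' and wave at an unspecified ``Popa-type deformation/rigidity analysis of the co-induced crossed product.'' No such analysis is given, and it is not clear how it would proceed, because the $\mathbb{F}_2$ one has at hand is only a measurable subrelation inside a cross-section equivalence relation of a Bernoulli extension; it is not a subgroup of $G$, so the naive Epstein argument for countable groups does not transfer.

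The paper sidesteps exactly this issue by rearranging the order of the steps. All of the co-induction and the property-(T) separability argument are carried out entirely at the level of countable pmp equivalence relations: Theorem~\ref{T: ext} produces an uncountable family of ergodic \emph{extensions} $\{\widetilde{\mathcal R}_i\}$ of the cross-section equivalence relation $\mathcal R$ that are pairwise not stably von Neumann equivalent. The transfer back to $G$-actions is then automatic and requires no rigidity at the level of $G$-crossed products. It uses two elementary facts: Proposition~\ref{cross}, which realizes \emph{any} ergodic extension of a cross-section equivalence relation as the cross-section equivalence relation of some other free ergodic pmp $G$-action; and \cite[Lemma~4.5]{KPV13}, which shows that for a co-compact cross section, $L^\infty(\widetilde X_i)\rtimes G$ is an amplification of the II$_1$ factor $L(\widetilde{\mathcal R}_i)$. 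Consequently, if two of the resulting $G$-actions were von Neumann equivalent, the corresponding cross-section factors would be stably isomorphic, contradicting Theorem~\ref{T: ext}. In other words, the ``descent'' you worry about is a soft consequence of the amplification relation between the crossed product and the cross-section algebra; no deformation/rigidity argument at the $G$-level is needed, and your proposal is incomplete without supplying one or replacing it with this observation.
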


This will follow from Theorem \ref{T: ext} below on extensions of equivalence relations, a notion which will be key in the rest of the paper.

\begin{definition}
For countable pmp equivalence relations $\mathcal R$ on $(X, \mu)$ and $\tilde{\mathcal R}$ on $(\tilde X, \tilde \mu)$, we say that $\tilde{\mathcal R}$ is a {\it class-bijective extension} (in short, an {\it extension}) of $\mathcal R$ if there is a Borel map $p: \tilde X \to X$ satisfying
\begin{enumerate}
\item $\mu(E) = \tilde \mu(p^{-1}(E))$, for all Borel sets $E \subset X$,
\item $p|_{[x]_{\tilde{\mathcal R}}}$ is injective, for almost every $x \in \tilde X$, and 
\item $p([x]_{\tilde{\mathcal R}}) = [p(x)]_{\mathcal R}$, for almost every $x \in \tilde X$. \label{A: equal} 
\end{enumerate}
\end{definition}

\begin{remark}
A map $p:\tilde X\to X$ which satisfies conditions (1)-(3) in the above definition is called a {\it local OE}  (or {\it local isomorphism}) of $\tilde{\mathcal R}$, $\mathcal R$ in \cite[Definition 1.4.2]{Po05}.
\end{remark}

Theorem \ref{main} leads to the following characterization of non-amenability for ergodic equivalence relations in terms of actions of $\mathbb F_2$, which can be viewed as a weak version of von Neumann's problem for equivalence relations:

\begin{mcor}\label{cor}
An ergodic countable pmp equivalence relation $\mathcal R$ is non-amenable if and only if it admits an extension which contains almost every orbit of a free ergodic pmp action of $\mathbb F_2$. 
\end{mcor}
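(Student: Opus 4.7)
I would split the biconditional into its two directions. The forward implication ($\mathcal R$ non-amenable $\Rightarrow$ the existence of such an extension) should be essentially a direct appeal to Theorem \ref{main}. Taking $(K,\kappa)$ to be any non-atomic probability space, for instance $([0,1],\lambda)$, one forms the Bernoulli extension $\mathcal R_K$ of $\mathcal R$. This is a class-bijective extension in the sense of the definition above, by construction of the Bernoulli extension (cf.\ the discussion following Theorem \ref{main}). Theorem \ref{main} then supplies a free ergodic pmp action $\mathbb F_2 \curvearrowright (X_K,\mu_\kappa)$ with $\mathcal R(\mathbb F_2 \curvearrowright X_K) \le \mathcal R_K$, so $\mathcal R_K$ is the desired extension.

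For the converse I would argue by contrapositive. Suppose $\mathcal R$ is amenable and let $\tilde{\mathcal R}$ be any class-bijective extension of $\mathcal R$, realized by a Borel map $p:\tilde X \to X$. The key step is to propagate amenability from $\mathcal R$ to $\tilde{\mathcal R}$. By the Connes--Feldman--Weiss theorem, write $\mathcal R = \bigcup_n \mathcal R_n$ as an increasing union of finite Borel subequivalence relations, and set
\[
\tilde{\mathcal R}_n := \{(\tilde x,\tilde y) \in \tilde{\mathcal R} : (p(\tilde x),p(\tilde y)) \in \mathcal R_n\}.
\]
Condition (2) of the definition of an extension forces $p$ to inject each $\tilde{\mathcal R}_n$-class into the corresponding $\mathcal R_n$-class, so each $\tilde{\mathcal R}_n$-class is finite; condition (3) gives $\bigcup_n \tilde{\mathcal R}_n = \tilde{\mathcal R}$. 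Hence $\tilde{\mathcal R}$ is hyperfinite, and therefore amenable (invoking Connes--Feldman--Weiss once more). Since amenability passes to subequivalence relations, any $\mathcal R(\mathbb F_2 \curvearrowright \tilde X) \le \tilde{\mathcal R}$ coming from a free action would itself be amenable, contradicting the well-known fact that the orbit equivalence relation of a free pmp action of a non-amenable countable group is non-amenable.

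I do not expect any serious obstacle: the substantive content sits entirely in Theorem \ref{main}, and the converse amounts to a routine pull-back of a hyperfinite exhaustion through the class-bijective map $p$, together with the two standard permanence properties of amenability (descent to subequivalence relations; non-amenability of orbit relations of free actions of non-amenable groups). The only minor point to verify is the Borel measurability of the $\tilde{\mathcal R}_n$, which is automatic as they are Borel preimages under $p\times p$.
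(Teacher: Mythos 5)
Your proposal is correct and follows essentially the same route as the paper: forward direction by Theorem \ref{main} with a non-atomic base, converse by pulling back a Connes--Feldman--Weiss hyperfinite exhaustion through the class-bijective map $p$. The only cosmetic difference is that you first establish hyperfiniteness of the full extension $\tilde{\mathcal R}$ and then descend to the $\mathbb F_2$-orbit relation $\mathcal S$, whereas the paper pulls the exhaustion back directly into $\mathcal S$ via $\mathcal S_n = \{(x,y)\in\mathcal S \mid p(x)\,\mathcal R_n\,p(y)\}$ -- both work for the same reason (injectivity of $p$ on $\tilde{\mathcal R}$-classes).
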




Combining this result with the co-induction construction of \cite{Ep07} and the methods of \cite{Io06}, we prove:

\begin{main}\label{T: ext}
Let $\mathcal R$ be a non-amenable ergodic countable pmp equivalence relation on a standard probability space.

Then $\mathcal R$ admits uncountably many ergodic extensions which are pairwise not stably von Neumann equivalent (hence, pairwise not stably isomorphic).
\end{main}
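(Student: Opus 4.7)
The plan is to follow the template set by \cite{Ep07}, which combined the Gaboriau--Lyons theorem with Ioana's construction \cite{Io06} to produce uncountably many pairwise non orbit equivalent free ergodic pmp actions of any non-amenable countable group, and to adapt it to the equivalence relation setting using Theorem \ref{main} in place of \cite{GL07}. The first step is to apply Theorem \ref{main} to $\mathcal{R}$ with a non-atomic base $(K,\kappa)$, producing a free ergodic pmp action $\sigma_0 \colon \mathbb{F}_2 \curvearrowright (X_K, \mu_{\kappa})$ whose orbit equivalence relation $\mathcal{S} := \mathcal{R}(\sigma_0)$ is a sub-relation of the Bernoulli extension $\mathcal{R}_K$ of $\mathcal{R}$. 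Since $\mathcal{R}_K$ is itself a class-bijective extension of $\mathcal{R}$, we view $\mathcal{S}$ as sitting inside an extension of $\mathcal{R}$.

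Next, I would select an uncountable family $\{\sigma_t \colon \mathbb{F}_2 \curvearrowright (Y_t, \nu_t)\}_{t \in T}$ of free ergodic pmp actions of $\mathbb{F}_2$ which are pairwise not stably von Neumann equivalent, as produced in \cite{Io06} by co-inducing actions of a suitable rigid quotient of $\mathbb{F}_2$. For each $t \in T$, I would form a co-induced extension $\tilde{\mathcal{R}}_t$ of $\mathcal{R}_K$ from the data $(\mathcal{S} \leq \mathcal{R}_K, \sigma_t)$: on the space $\tilde{X}_t := X_K \times Y_t$, declare $(x, y)\,\tilde{\mathcal{R}}_t\,(x', y')$ iff $(x, x') \in \mathcal{R}_K$ and $y' = \omega(x, x') \cdot y$, where $\omega \colon \mathcal{R}_K \to \mathbb{F}_2$ is a Borel cocycle constructed from a Borel transversal for $\mathcal{S} \leq \mathcal{R}_K$ and extending the canonical cocycle of $\mathcal{S}$ (which exists by freeness of $\sigma_0$). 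The composition $(x, y) \mapsto x \mapsto p(x)$, where $p \colon X_K \to X$ is the Bernoulli projection, realizes $\tilde{\mathcal{R}}_t$ as a class-bijective extension of $\mathcal{R}$; class-bijectivity uses freeness of $\sigma_t$ together with class-bijectivity of $\mathcal{R}_K$ over $\mathcal{R}$, and ergodicity follows from ergodicity of $\mathcal{R}_K$ together with ergodicity of $\sigma_t$.

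The crux is to show that if $\tilde{\mathcal{R}}_s$ and $\tilde{\mathcal{R}}_t$ are stably von Neumann equivalent for $s \neq t$, then so are $\sigma_s$ and $\sigma_t$. By construction, $L(\tilde{\mathcal{R}}_t)$ is a crossed-product-like algebra built from $L(\mathcal{R}_K)$ and the action $\sigma_t$ twisted by the cocycle $\omega$. A $*$-isomorphism of amplifications $L(\tilde{\mathcal{R}}_s)^{\theta} \cong L(\tilde{\mathcal{R}}_t)$ yields an $\mathcal{R}_K$-valued cocycle for $\sigma_s$; Popa's cocycle superrigidity applied to the Bernoulli extension $\mathcal{R}_K \to \mathcal{R}$ then forces this cocycle to untwist modulo inner perturbations, and unwinding this identification yields a stable von Neumann equivalence between $\sigma_s$ and $\sigma_t$. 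Since each stable von Neumann equivalence class among the $\sigma_t$'s is countable while $T$ is uncountable, a standard cardinality argument produces uncountably many pairwise non stably von Neumann equivalent extensions of $\mathcal{R}$.

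The main obstacle is the rigidity step: whereas Popa's cocycle superrigidity and the deformation/rigidity machinery of \cite{Io06} are tailored to group actions, here the ``ambient group'' is replaced by the (non-group-like) equivalence relation $\mathcal{R}_K$. One must verify that the Bernoulli structure of $\mathcal{R}_K \to \mathcal{R}$ provides sufficient malleability to run a deformation/rigidity argument, and that the resulting rigidity statement translates a stable vN equivalence of the $\tilde{\mathcal{R}}_t$'s into one of the $\sigma_t$'s. Carefully tracking how the $\mathbb{F}_2$-factor sits transversally inside the co-induced extension and controlling behaviour under amplification (to obtain \emph{stable}, not merely unital, equivalence) is the heart of the argument.
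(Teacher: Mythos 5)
Your proposal and the paper's proof share the same skeleton (co-induce over the sub-relation $\mathbb F_2 \curvearrowright X_K$ produced by Theorem \ref{main}, then distinguish the resulting extensions), but diverge at the crucial rigidity step, and the route you propose has a genuine gap.

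The paper does not attempt the reduction you describe, namely showing that stable von Neumann equivalence of $\tilde{\mathcal R}_s$ and $\tilde{\mathcal R}_t$ implies stable von Neumann equivalence of the original $\mathbb F_2$-actions. That reduction, as you set it up, would need something like Popa's cocycle superrigidity applied to the Bernoulli extension $\mathcal R_K \to \mathcal R$. No such theorem is available in this generality: Popa's cocycle superrigidity for Bernoulli actions requires the acting object to possess a rigidity property (property (T), w-rigidity, product structure, etc.), none of which hold for an arbitrary non-amenable $\mathcal R$. You flag this yourself as the ``main obstacle,'' but you do not supply a way around it, and as stated the step would fail. A second, related omission: you co-induce the actions $\sigma_t$ directly, without tensoring in the rigid action $\alpha^0 = \mathbb F_2 \curvearrowright \mathbb T^2$. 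This means the extensions $\tilde{\mathcal R}_t$ need not carry any rigid inclusion you could exploit.

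What the paper actually does is bake the rigidity into the construction rather than appeal to an external superrigidity theorem. It co-induces $\alpha^i \times \alpha^0$, where $\alpha^0$ is the restriction of $SL_2(\mathbb Z) \curvearrowright \mathbb T^2$ to $\mathbb F_2$, so the inclusion $L^\infty(\mathbb T^2) \subset L^\infty(\mathbb T^2)\rtimes \mathbb F_2$ has relative property (T). The $\alpha^i$ are not taken from \cite{Io06}; they are Gaussian actions built from an uncountable family of inequivalent mixing irreducible representations $\pi_i$ of $\mathbb F_2$ (from Szwarc), chosen so that $\pi_i \subset \pi^0_{\alpha^i}$. Each $\tilde{\mathcal R}_i$ is then an expansion of $\mathcal R(\alpha^0)$, giving a canonical $\mathbb F_2$-action $\sigma^i$ with a rigid Cartan-type inclusion inside $L(\tilde{\mathcal R}_i)$. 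If uncountably many $L(\tilde{\mathcal R}_i)$ were stably isomorphic to a fixed separable ${\rm II}_1$ factor $M$, the relative property (T) of $L^\infty(\mathbb T^2)q \subset qQq$ combined with $\|\cdot\|_2$-separability of $M$ produces, for uncountably many pairs $(i,j)$, conjugacies between restrictions $\sigma^i|_{E_i}$ and $\sigma^j|_{E_j}$ (Lemma \ref{L: sep}). Since $\pi_i \subset \pi^0_{\alpha^i} \subset \pi_{\sigma^j}$, the Koopman representation on a separable Hilbert space $L^2(E_j)$ would then contain uncountably many inequivalent irreducibles, a contradiction. So the distinguishing invariant is representation-theoretic, not a transferred stable von Neumann equivalence of the base $\mathbb F_2$-actions. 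To repair your proposal you would need to incorporate the rigid factor $\alpha^0$ into the co-induced data and replace the cocycle superrigidity step with a Popa-style intertwining/separability argument of exactly this kind.
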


\begin{remark} Theorem \ref{T: ext} implies the following dichotomy: any ergodic countable pmp equivalence relation has either only one or uncountably many ergodic extensions, up to isomorphism. 
Indeed, if $\mathcal R$ is an amenable countable pmp equivalence relation, then $\mathcal R$ is hyperfinite by \cite{CFW81}.  As a consequence, any two ergodic extensions of $\mathcal R$ are hyperfinite, and thus isomorphic by \cite{Dy59}.  
\end{remark}

\begin{remark}
Let $\Gamma$ be a countable non-amenable group. If $\mathcal R$ is the orbit equivalence relation of some free pmp action of $\Gamma$, then any extension of $\mathcal R$ is the orbit equivalence relation of some other free pmp action of $\Gamma$. Theorem \ref{T: ext} implies that $\Gamma$ admits uncountably many actions which are pairwise not stably von Neumann equivalent,  thereby strengthening the results of \cite{Io06, Ep07}.

\end{remark}

Inspired by \cite{KPV13}, our approach to deducing Theorem \ref{lc} from Theorem \ref{T: ext} is based on the notion of cross section equivalence relations. Specifically, we rely on the following elementary observation: if  $\mathcal R$ is a cross section equivalence relation of some free ergodic pmp action of a unimodular lcsc group $G$, then any ergodic extension of ${\mathcal R}$ can be realized as a cross section equivalence relation of some other free ergodic pmp action of $G$ (see Proposition \ref{cross}).

This observation turns out to also be useful in a different context. 
Very recently, M. Gheysens and N. Monod introduced a measure-theoretic analogue of closed subgroup embeddings for locally compact groups, called {\it tychomorphism} \cite[Definition 14]{GM15}.  Using this notion, they formulated and proved a generalization of the Gaboriau-Lyons theorem for lcsc groups $G$: if $G$ is non-amenable, then there is a tychomorphism from  $\mathbb F_2$ to $G$ (see \cite[Theorem B]{GM15}). 
When combined with Theorem \ref{main}, the observation in Proposition \ref{cross} leads to a proof of this result which bypasses the usage of the structure theory of locally compact groups as in \cite{GM15} (see Subsection \ref{GM}).
 
We note that \cite{GM15} is representative of a lot of recent interest in measure-theoretic versions of von Neumann's problem spawned by the pioneering work \cite{GL07}. Thus, the main result of \cite{GL07} was strengthened in \cite{Ku13}, and its proof was simplified in \cite{Th13}. Very recently, von Neumann's problem was shown to have a positive solution for non-amenable equivalence relations that act on hyperbolic bundles \cite{Bo15}. 

\subsection*{Outline of the proof of Theorem \ref{main}} We end the introduction by outlining the proof of  the main assertion of Theorem \ref{main}.  This  relies on an extension of techniques from \cite{GL07}.
To fix notation, let $\mathcal R$ be an ergodic non-amenable countable pmp equivalence relation on a probability space $(X,\mu)$. Let $(K,\kappa)=([0,1],\lambda)$, and put $\tilde X:=X_K$ and $\tilde{\mathcal R}:=\mathcal R_{K}$. Our goal is to show that $\tilde{\mathcal R}$ contains the orbits of a free ergodic pmp action of $\mathbb F_2$.

To this end, denote by $u:[\mathcal R]\rightarrow\mathcal U(L^2(\mathcal R,m))$ the canonical representation of the full group $[\mathcal R]$.
Since $\mathcal R$ is ergodic and non-amenable, after  replacing $\mathcal R$ with a subequivalence relation, we may assume that $\mathcal R$ is generated by finitely many automorphisms $\theta_1,...,\theta_n\in [\mathcal R]$ such that the operator $T=\sum_{i=1}^n(u(\theta_i)+u(\theta_i^{-1}))$ satisfies $\|T\|<2n$. Moreover, after replacing the set $S=\{\theta_1,...,\theta_n\}$ with a power $S^k=\{\theta_{i_1}...\theta_{i_k}|1\leqslant i_1,...,i_k\leqslant n\}$, for large enough $k$, we may assume that $\|T\|\leqslant n$.

For $x\in X$, we denote by $\mathcal G_x=([x]_{\mathcal R},E_x)$ the graph  on $[x]_{\mathcal R}$ associated to the graphing $\{\theta_1,...,\theta_n\}$. Then we can identify the  $\tilde X$ with the set of pairs $(x,\omega)$, with $x\in X$ and $\omega\in [0,1]^{E_x}$, such that $\tilde{\mathcal R}$ is identified with the equivalence relation  given by: {$(x,\omega)\tilde{\mathcal R}(y,\xi)$ iff  $x\mathcal Ry$ and $\omega=\xi$.

For $p\in [0,1]$ and $x\in X$, we denote by $\pi_p:[0,1]^{E_x}\rightarrow \{0,1\}^{E_x}$ the map $\pi_p(\omega)=({\bf 1}_{[0,p]}(\omega_e))_e$, and
 view  $\pi_p(\omega)\in\{0,1\}^{E_x}$ as a subgraph of $\mathcal G_x$, for every $\omega=(\omega_e)_e\in\{0,1\}^{E_x}$.

In the first part of the proof, we use results from percolation theory, notably \cite{NS81} and \cite{BS96}, to show that if $p$ is in the interval $\big(\frac{1}{2n-\|T\|+1},\frac{1}{\|T\|}\big)$, then the graph $\pi_p(\omega)$ has infinitely many infinite clusters (i.e. connected components), for almost every $(x,\omega)\in\tilde X$. 

In the second part of the proof, we  consider the cluster equivalence relation $\tilde{\mathcal R}_{\text{cl}}$ on $\tilde X$ given by:  $(x,\omega){\tilde{\mathcal R}}_{\text{cl}}(y,\xi)$ iff $(x,\omega)\tilde{\mathcal R}(y,\xi)$ and} $x,y$ belong to the same cluster of $\pi_p(\omega)=\pi_p(\xi)$ \cite{Ga05}. By combining the first part of the proof with results from \cite{LS99,AL06} and \cite{Ga99} we conclude that the restriction ${\tilde{\mathcal R}}_{\text{cl}}$ to its infinite locus is ergodic and has normalized cost $>1$. 

Finally, since ${\tilde{\mathcal R}}_{\text{cl}}\subset\tilde{\mathcal R}$, a combination of results from \cite{Hj06}  and \cite{KM04,Pi05} implies that $\tilde{\mathcal R}$ contains the orbits of a free ergodic pmp action of $\mathbb F_2$.

\subsection*{Organization} 
Besides the introduction, this paper has seven other sections. In Section 2, we collect several facts about equivalence relation. In particular, we prove that we may assume $\|T\|\leqslant n$ and show that the isoperimetric constant of the graph $\mathcal G_x$ satisfies $\iota(\mathcal G_x)\geqslant 2n-\|T\|$. Section 3 contains various general results on Bernoulli extensions of equivalence relations.
 Sections 4 and 5 are devoted to the first and second part of the proof of the main assertion of Theorem \ref{main} described above. In Section 6, we complete the proof of Theorem \ref{main} and deduce Corollary \ref{cor}. Finally, in Sections 7 and 8, we present the proofs of Theorem \ref{T: ext} and Theorem \ref{lc}, respectively.

\subsection*{Acknowledgements} We are grateful to Nicolas Monod for helpful comments.


\section {Preliminaries}  

In this section we recall several general notions and results regarding equivalence relations.

\subsection{Equivalence relations} \label{equiv} Let $(X,\mu)$ be a probability space, always assumed to be standard. Following \cite{FM77}, an equivalence relation $\mathcal R$ on $X$ is called  {\it countable probability measure preserving} ({\it countable pmp}) if it has countable classes, $\mathcal R$ is a measurable subset of $X\times X$,  and any measurable automorphism $\theta:X\rightarrow X$ which satisfies $\theta(x)\in [x]_{\mathcal R}$, for almost every $x\in X$, preserves $\mu$.
Here, for $x\in X$, we denote by $[x]_{\mathcal R}$ its equivalence class. 

 The group of measurable automorphisms $\theta:X\rightarrow X$ satisfying $\theta(x)\in [x]_{\mathcal R}$, for almost every $x\in X$, is called the {\it full group} of $\mathcal R$ and denoted $[\mathcal R]$.
  We also denote by $[[\mathcal R]]$ the set of measurable isomorphisms $\theta:A\rightarrow B$ between  measurable subsets of $X$ which satisfy $\theta(x)\in [x]_{\mathcal R}$, for almost every  $x\in A$.

Here and after, we say that a pmp action $\Gamma\curvearrowright (X,\mu)$ is {\it essentially free} (in short, {\it free}) if the stabilizer $\Gamma_x=\{g\in\Gamma|g\cdot x=x\}$ is trivial, for almost every $x\in X$.
If $\Gamma\curvearrowright (X,\mu)$ is a pmp action of a countable group $\Gamma$, then its {\it orbit equivalence relation} $$\mathcal R(\Gamma\curvearrowright X):=\{(x,y)\in X\times X \;|\; \Gamma x=\Gamma y\}$$ is a countable pmp equivalence relation. Conversely, Feldman and Moore proved that any countable pmp equivalence relation arises this way \cite{FM77}. However, this action cannot always be taken to be free, a question that was settled by Furman \cite{Fu99}.

Let $\mathcal R$ be a countable pmp equivalence relation on $(X,\mu)$. We endow $\mathcal R$ with an infinite Borel measure $m$ given by $$m(A)=\int_{X}|\{y\in [x]_{\mathcal R}\;|\; (x,y)\in A\}|\;\text{d}\mu(x),\;\;\;\text{for every Borel subset}\;\;\;A\subset\mathcal R.$$
Then  $u:[\mathcal R]\rightarrow \mathcal U(L^2(\mathcal R,m))$ given by  the formula $(u({\theta})f)(x,y)=f(\theta^{-1}(x),y)$ defines a unitary representation.
Note that $L^{\infty}(\mathcal R)$ acts on $L^2(\mathcal R,m)$ by pointwise multiplication and the unitary $u(\theta)$ normalizes $L^{\infty}(\mathcal R)$, for every $\theta\in[\mathcal R]$.  We  also have an embedding $L^{\infty}(X)\subset L^{\infty}(\mathcal R)$ which associates to every $a\in L^{\infty}(X)$ the function $(x,y)\rightarrow a(x)$.

The von Neumann algebra of $\mathcal R$ is defined as the strong operator closure of the linear span of $\{a\hskip 0.03in u(\theta)|a\in L^{\infty}(X),\theta\in[\mathcal R]\}$ inside $\mathbb B(L^2(\mathcal R,m))$, and is denoted by $L(\mathcal R)$ \cite{FM77}. Recall that $L(\mathcal R)$ is a finite von Neumann algebra, with its canonical trace given by $$\tau(T)=\big\langle T({\bf 1}_{\Delta}),{\bf 1}_{\Delta}\big\rangle,\;\;\;\text{for every $T\in L(\mathcal R)$},$$
where ${\bf 1}_{\Delta}\in L^2(\mathcal R,m)$ denotes the characteristic function of $\Delta=\{(x,x)|x\in X\}$.

\begin{definition}
Let $\mathcal R$ and $\mathcal S$ be two (not necessarily countable) equivalence relation on probability spaces $(X,\mu)$ and $(Y,\nu)$. Then $\mathcal R$ and $\mathcal S$ are called {\it isomorphic} (resp. {\it stably isomorphic}) if there exist Borel subsets $X_0\subset X, Y_0\subset Y$ which are co-null (resp. complete sections for $\mathcal R$, $\mathcal S$),
and a measure preserving Borel isomorphism $\theta:X_0\rightarrow Y_0$ such that $x\mathcal Rx'$ iff $\theta(x)\mathcal S\theta(x')$, for all $x,x'\in X_0$.
Here, we endow $X_0\subset X$ with the probability measure $\mu(X_0)^{-1}(\mu|X_0)$. Also, we say that a Borel set $X_0\subset X$ is a complete section for $\mathcal R$ if $\{x\in X|\;[x]_{\mathcal R}\cap X_0\not=\emptyset\}$ is a co-null subset of $X$. 
Moreover, if $\mathcal R$ and $\mathcal S$  are countable pmp, then they are called {\it von Neumann equivalent} (resp. {\it stably von Neumann equivalent}) if their von Neumann algebras $L(\mathcal R)$ and $L(\mathcal S)$ are isomorphic (resp. $pL(\mathcal R)p\cong qL(\mathcal S)q$, for some non-zero projections $p\in L(\mathcal R)$, $q\in L(\mathcal S)$). 

Two pmp actions $\Gamma\curvearrowright (X,\mu)$ and $\Lambda\curvearrowright (Y,\nu)$ of two locally compact second countable ({\it lcsc}) groups  $\Gamma$ and $\Lambda$ are called  {\it orbit equivalent} (resp. {\it stably orbit equivalent}) if their orbit equivalence relations are isomorphic (resp. stably isomorphic).
Finally, the actions are called {\it von Neumann equivalent} if the associated crossed product von Neumann algebras (see, e.g.,  \cite[Chapter X]{Ta03} for the definition)  are isomorphic. 
\end{definition}

\subsection{Amenable equivalence relations} Following \cite[Definition 6]{CFW81}, a countable pmp equivalence relation $\mathcal R$ on $(X,\mu)$ is called {\it amenable} if there exists a state $\Phi:L^{\infty}(\mathcal R)\rightarrow\mathbb C$ such that $\Phi(u({\theta})fu({\theta})^*)=\Phi(f)$, for all $f\in L^{\infty}(\mathcal R)$, $\theta\in [\mathcal R]$, and $\Phi(a)=\int_{X}a\;\text{d}\mu$, for all $a\in L^{\infty}(X)$. 
 By \cite[Theorem 10]{CFW81} a countable pmp equivalence relation $\mathcal R$ is amenable if and only if it is {\it hyperfinite}. The latter means that we can write $\mathcal R=\cup_{n\geq 1}\mathcal R_n$, where $\mathcal R_n$ is a countable pmp equivalence relation on $X$ such that $[x]_{\mathcal R_n}$ is finite and $[x]_{\mathcal R_n}\subset [x]_{\mathcal R_{n+1}}$, for almost every $x\in X$ and all $n\geq 1$. 

Next, we record the well-known fact that an ergodic equivalence relation $\mathcal R$ is non-amenable if and only if the unitary representation $u:[\mathcal R]\rightarrow\mathcal U(L^2(\mathcal R,m))$ has {\it spectral gap}:

\begin{lemma}\label{spec} Let $\mathcal R$ be a non-amenable ergodic countable pmp equivalence relation.

Then we can find $n\geq 1$ and $\theta_1,...,\theta_n\in[\mathcal R]$ such that 
$\|\frac{1}{n}\sum_{i=1}^nu(\theta_i)\|<1.$ 
Moreover,  if $c>0$, then we can find $n\geq 1$ and $\theta_1,...,\theta_n\in[\mathcal R]$ such that 
$\|\frac{1}{n}\sum_{i=1}^nu(\theta_i)\|<c.$

\end{lemma}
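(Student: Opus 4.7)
I would prove the first assertion by contrapositive, deriving amenability from absence of spectral gap, and deduce the ``moreover'' part by a standard power trick. Assume for contradiction that $\|\tfrac{1}{n}\sum_{i=1}^n u(\theta_i)\|=1$ for every finite tuple $(\theta_1,\ldots,\theta_n)\in[\mathcal R]^n$. For any finite $F\subset[\mathcal R]$ and $\varepsilon>0$, applying the hypothesis to an enumeration of $F\cup\{\mathrm{id}\}$ and using the identity $\|\sum_i a_i\|^2=n^2-\sum_{i<j}\|a_i-a_j\|^2$ for unit vectors $a_i$, one obtains $\xi\in L^2(\mathcal R,m)$ with $\|\xi\|_2=1$ and $\|u(\theta)\xi-\xi\|_2<\varepsilon$ for each $\theta\in F$. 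Passing from $\xi$ to $|\xi|$ we may assume $\xi\geq 0$. Setting $f:=\xi^2\in L^1(\mathcal R,m)^+$, we have $\|f\|_1=1$ and the pointwise identity $|a^2-b^2|=|a-b|(a+b)$ combined with Cauchy--Schwarz yields $\|u(\theta)f-f\|_1\leq 2\varepsilon$ for every $\theta\in F$.

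To convert $f$ into a probability density on $\mathcal R$ with marginal $\mu$ on $X$, I would fiberwise normalize. First replace $f$ by the perturbation $(f+\delta\mathbf{1}_\Delta)/(1+\delta)$ for a small fixed $\delta>0$, so that $h(x):=\sum_{y\in[x]_\mathcal R}f(x,y)\geq\delta/(1+\delta)$, and then set $g(x,y):=f(x,y)/h(x)$. Then $g\geq 0$ and $\sum_y g(x,y)=1$ a.e., so the state $\phi\mapsto\int\phi\,g\,dm$ on $L^\infty(\mathcal R)$ restricts to $\int\cdot\,d\mu$ on $L^\infty(X)$. Since $h$ is itself almost invariant in $L^1(X,\mu)$ (as the fiber sum of the almost invariant $f$) and $h$ is bounded below, an elementary triangle-inequality estimate gives $\|u(\theta)g-g\|_1=O(\varepsilon/\delta+\delta)$ for every $\theta\in F$. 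Taking a weak-$*$ cluster point along $F\nearrow[\mathcal R]$ with $\delta\to 0$ and $\varepsilon/\delta\to 0$ produces an $[\mathcal R]$-invariant state on $L^\infty(\mathcal R)$ extending $\mu$, so $\mathcal R$ is amenable --- the desired contradiction.

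For the ``moreover'' part, fix $c>0$ and let $\theta_1,\ldots,\theta_n\in[\mathcal R]$ be as in the first assertion with $\rho:=\|\tfrac{1}{n}\sum u(\theta_i)\|<1$. Since $u:[\mathcal R]\to\mathcal U(L^2(\mathcal R,m))$ is a group homomorphism, for every $k\geq 1$,
$$\Bigl(\tfrac{1}{n}\sum_{i=1}^n u(\theta_i)\Bigr)^k=\tfrac{1}{n^k}\sum_{(i_1,\ldots,i_k)\in\{1,\ldots,n\}^k}u(\theta_{i_1}\cdots\theta_{i_k}),$$
so the averaging operator over the $N:=n^k$ elements $\theta_{i_1}\cdots\theta_{i_k}\in[\mathcal R]$ (with repetition, which the statement permits) has norm at most $\rho^k$; choose $k$ so that $\rho^k<c$. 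The main obstacle throughout is the fiberwise normalization step: handling the possible vanishing of $h$ and controlling $\|u(\theta)g-g\|_1$ in terms of $\|u(\theta)f-f\|_1$. Both are resolved by the perturbation by $\mathbf{1}_\Delta$ and the almost invariance of $h$ in $L^1(X,\mu)$, so the heart of the argument is truly the equivalence-relation analogue of Hulanicki's theorem linking non-amenability of $\mathcal R$ with spectral gap of the canonical representation $u$.
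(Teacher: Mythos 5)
Your overall strategy (derive almost-invariant vectors from the failure of spectral gap, pass to $L^1$, produce an invariant state, conclude amenability; then the power trick for the "moreover" part) is the same as the paper's, and the "moreover" part is correct and identical. However, there is a genuine gap in the fiberwise-normalization step, and it is not incidental.

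The problem is the estimate $\|u(\theta)g-g\|_1=O(\varepsilon/\delta+\delta)$. After replacing $f$ by $(f+\delta\mathbf{1}_\Delta)/(1+\delta)$ you do get $h\geq\delta/(1+\delta)$, but $\mathbf{1}_\Delta$ is far from $u$-invariant: $\|u(\theta)\mathbf{1}_\Delta-\mathbf{1}_\Delta\|_1=2\mu(\{x:\theta x\neq x\})$, which is generically $2$. Hence after perturbation $\|u(\theta)f-f\|_1$ and $\|u(\theta)h-h\|_{L^1(X)}$ are only $O(\varepsilon+\delta)$, and writing
$$u(\theta)g-g=\frac{u(\theta)f-f}{u(\theta)h}+\frac{f\,(h-u(\theta)h)}{h\cdot u(\theta)h},$$
each term, after integrating against $m$ and using the lower bound $h\gtrsim\delta$, is $O\bigl((\varepsilon+\delta)/\delta\bigr)=O(\varepsilon/\delta+1)$. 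The $O(1)$ does not vanish as $\delta\to 0$, so the weak-$*$ cluster point need not be $[\mathcal R]$-invariant. The perturbation cost and the normalization blow-up exactly cancel.

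This is also where ergodicity should enter, and your proposal never uses it — which is a second warning sign, since the first assertion of the lemma is false without ergodicity (if $\mathcal R$ has an amenable ergodic component, the averaging operators always have norm $1$). The paper avoids the marginal difficulty entirely: it invokes Haagerup's argument (\cite[Lemma 2.2]{Ha83}) to produce an $\operatorname{Ad}u$-invariant state $\Phi$ on $L^\infty(\mathcal R)$ from the absence of spectral gap, and then uses ergodicity of $\mathcal R$ (via the argument of \cite[Lemma 4.2]{HV12}) to show that any such $\Phi$ automatically restricts to $\int\cdot\,d\mu$ on $L^\infty(X)$. I would recommend replacing the fiberwise normalization with this two-step argument: first get the $\operatorname{Ad}u$-invariant state, then pin down the marginal by ergodicity.
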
 
{\it Proof.} Assume by contradiction that $\|\frac{1}{n}\sum_{i=1}^nu(\theta_i)\|=1$, for all $\theta_1,...,\theta_n\in [\mathcal R]$. Then by arguing as in  the proof of \cite[Lemma 2.2]{Ha83} it follows  that there exists a state $\Phi:L^{\infty}(\mathcal R)\rightarrow\mathbb C$ such that  $\Phi(u({\theta})fu({\theta})^*)=\Phi(f)$, for all $f\in L^{\infty}(R)$ and every $\theta\in [\mathcal R]$. Since $\mathcal R$ is ergodic, any such state $\Phi$ also satisfies that $\Phi(g)=\int_{X}g\; \text{d}\mu$, for all $g\in L^{\infty}(X)$ (see e.g. the proof of \cite[Lemma 4.2]{HV12}). This implies that $\mathcal R$ is amenable, which is a contradiction.

For the moreover assertion, let $\theta_1,...,\theta_n\in [\mathcal R]$ such that $\delta:=\|\frac{1}{n}\sum_{i=1}^nu(\theta_i)\|<1$. Let $c>0$ and choose $m\geq 1$ such that $\delta^m<c$. Then we have that $$\left\|\frac{1}{n^m}\sum_{1\leq i_1,...,i_m\leq n}u(\theta_{i_1}...\theta_{i_m})\right\|=\left\|\left(\frac{1}{n}\sum_{i=1}^nu(\theta_i)\right)^m\right\|\leq\delta^m<c.$$

Thus, the elements $\theta_{i_1}...\theta_{i_m}\in [\mathcal R]$, for $1\leq i_1,...,i_m\leq n$, satisfy the second assertion.
\hfill$\square$

\subsection{Extensions and Expansions of Equivalence Relations}\begin{Hoff_Commands}
Let $\mathcal R$ and $\tilde{\mathcal R}$ be countable pmp equivalence relations on probability space $(X, \mu)$ and on $(\tilde X, \tilde \mu)$, respectively.

\begin{definition} We say that $\tilde{\mathcal R}$ is a {\it class-bijective extension} (in short, an {\it extension}) of $\mathcal R$ if there is a Borel  map $p: \tilde X \to X$ satisfying
\begin{enumerate}
\item $\mu(E) = \tilde \mu(p^{-1}(E))$, for all Borel $E \subset X$,
\item $p|_{[x]_{\tilde \R}}$ is injective, for almost every $x \in \tilde X$, and 
\item $p([x]_{\tilde \R}) = [p(x)]_\R$, for almost every $x \in \tilde X$. \label{A: equal} 
\end{enumerate}

We say that $\tilde \R$ is an {\it expansion} of $\R$ if condition (\ref{A: equal}) is weakened to

\begin{enumerate}
\item[(3')] $p([x]_{\tilde \R}) \supset [p(x)]_\R$ for almost every $x \in \tilde X$.
\end{enumerate}
\end{definition}

\begin{notation} Below we use the notation $\tilde{\mathcal R}\rightarrow\mathcal R$ to mean that $\tilde{\mathcal R}$ is an extension of $\mathcal R$.
\end{notation}

\begin{remark}\label{R: lift}
Assume that $\tilde \R$ is an extension of $\mathcal R$ and let $\mathcal S\leq\mathcal R$ be a subequivalence relation.
Then $\tilde{\mathcal S}:=\{(x,y)\in\tilde \R | (p(x),p(y))\in\mathcal S\}$  is an extension of $\mathcal S$, which we call the {\it lift of $\mathcal S$ to $\tilde{\mathcal R}$}.
\end{remark}

\begin{remark}
Assume that $\tilde \R$ is an expansion of $\mathcal R$. Then $\tilde\R$ contains an extension $\tilde \R_0 \leq \tilde \R$ of $\R$ defined by $\tilde \R_0 = \{(x, y) \in \tilde \R | (p(x), p(y)) \in \R\}.$ Note, however, that containing an extension of $\R$ is not equivalent to being an expansion of $\R$. 
\end{remark}

Suppose that $\tilde \R$ is an expansion of $\R$ and let $p:\tilde X\rightarrow X$ as in the above definition. If $\theta \in [\R]$, then for almost every $x \in \tilde X$, the set $p^{-1}(\theta(p(x)))\cap [x]_{\tilde \R}$ contains exactly one point $x' \in \tilde X$. We may therefore define $\tilde\theta \in [\tilde \R]$ by $\tilde\theta(x) = x'$. 
Note that $\theta\circ p = p \circ \tilde\theta$, for all $\theta \in [\R]$. One can check that $\theta\mapsto \tilde\theta$ is a homomorphism from $[\R]$ into $[\tilde \R]$. 
For $a \in L^\infty(X)$, we let $\tilde a = a \circ p \in L^\infty(\tilde X)$. 

The next result is due to S. Popa \cite[Proposition 1.4.3]{Po05}. For completeness, we include a proof.

\begin{lemma}\emph{\cite{Po05}}\label{L: exp}
There is a trace preserving $*$-homomorphism $\pi: L(\R) \rightarrow L(\tilde \R)$ satisfying
 \begin{enumerate}
\item $\pi(a) = \tilde a$, for every $a \in L^\infty(X)$,
\item $\pi(u(\theta)) = u({\tilde\theta})$, for every $\theta\in [\R]$, and
\item $\pi(L^\infty(X))' \cap L(\tilde \R) = L^\infty(\tilde X).$ \label{LC: commutant}
\end{enumerate}
Moreover, if $\tilde \R$ is an extension of $\R$, then
the linear span of $\{bu({\tilde\theta})| b \in L^\infty(\tilde X), \theta \in [\R]\}$ is dense in $L(\tilde{\mathcal R})$, in the strong operator topology.

\end{lemma}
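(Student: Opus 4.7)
The plan is to define $\pi$ on the dense $*$-subalgebra $\mathcal{A}_0 = \operatorname{span}\{au(\theta) : a \in L^\infty(X), \theta \in [\mathcal R]\}$ of $L(\mathcal R)$ by $\pi(au(\theta)) = \tilde a\, u(\tilde\theta)$, verify it is a trace-preserving $*$-homomorphism, and extend by normality. The main verifications at the algebra level are: (i) the map $\theta \mapsto \tilde\theta$ is a group homomorphism $[\mathcal R] \to [\tilde{\mathcal R}]$, which follows from $p \circ \tilde\theta = \theta \circ p$ combined with injectivity of $p$ on $\tilde{\mathcal R}$-classes (which uniquely characterizes $\widetilde{\theta\sigma} = \tilde\theta\tilde\sigma$); and (ii) the covariance $u(\tilde\theta) \tilde a\, u(\tilde\theta)^{-1} = \widetilde{a \circ \theta^{-1}}$, which is immediate from $(\widetilde{a\circ\theta^{-1}})(x) = a(\theta^{-1}(p(x))) = a(p(\tilde\theta^{-1}(x))) = (\tilde a \circ \tilde\theta^{-1})(x)$. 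From (i) and (ii), $\pi$ extends linearly to a $*$-homomorphism on $\mathcal{A}_0$ satisfying (1) and (2).

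Next, trace preservation: a direct calculation with the formula $\tau(T) = \langle T\mathbf{1}_\Delta, \mathbf{1}_\Delta \rangle$ yields $\tau(au(\theta)) = \int_{\operatorname{Fix}(\theta)} a\, d\mu$ and similarly $\tau(\tilde a u(\tilde\theta)) = \int_{\operatorname{Fix}(\tilde\theta)} \tilde a\, d\tilde\mu$. The relation $p \circ \tilde\theta = \theta \circ p$, together with injectivity of $p$ on classes, gives $\operatorname{Fix}(\tilde\theta) = p^{-1}(\operatorname{Fix}(\theta))$, and then the measure-preserving property of $p$ yields $\tau(\tilde a u(\tilde\theta)) = \tau(au(\theta))$. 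Since $\pi$ is a trace-preserving $*$-homomorphism on a weakly dense $*$-subalgebra, standard GNS arguments give a unique normal $*$-extension $\pi : L(\mathcal R) \to L(\tilde{\mathcal R})$.

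For the commutant property (3), the inclusion $L^\infty(\tilde X) \subset \pi(L^\infty(X))' \cap L(\tilde{\mathcal R})$ is clear. For the converse, fix a countable group $\tilde\Gamma \subset [\tilde{\mathcal R}]$ generating $\tilde{\mathcal R}$ (as in \cite{FM77}), so that every $T \in L(\tilde{\mathcal R})$ admits a Fourier expansion $T = \sum_{\sigma \in \tilde\Gamma} b_\sigma u(\sigma)$ with $b_\sigma \in L^\infty(\tilde X)$ supported on pairwise disjoint portions of $\tilde\Gamma$ (obtained by partitioning according to the ``first'' $\sigma$ representing each orbit element). If $T$ commutes with every $\tilde a$, the orthogonality of distinct Fourier modes forces $b_\sigma \cdot (\tilde a - \tilde a \circ \sigma^{-1}) = 0$ for each $\sigma$ and every $a \in L^\infty(X)$. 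On the support of $b_\sigma$, this yields $a(p(x)) = a(p(\sigma^{-1}(x)))$ for all $a$, hence $p(x) = p(\sigma^{-1}(x))$; since $x$ and $\sigma^{-1}(x)$ both lie in $[x]_{\tilde{\mathcal R}}$ where $p$ is injective, $\sigma^{-1}(x) = x$. Thus only the diagonal mode survives, and $T \in L^\infty(\tilde X)$. The commutant argument is where I expect the most care is required, since one must handle the Fourier decomposition on general $L(\tilde{\mathcal R})$-elements rigorously.

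Finally, for the moreover assertion, assume $\tilde{\mathcal R}$ is an extension of $\mathcal R$ and fix $\tilde\sigma \in [\tilde{\mathcal R}]$. Enumerate a countable group $\{\theta_n\}_{n \geq 1} \subset [\mathcal R]$ generating $\mathcal R$. Since $p([x]_{\tilde{\mathcal R}}) = [p(x)]_{\mathcal R}$, for a.e.\ $x$ the element $p(\tilde\sigma(x)) \in [p(x)]_{\mathcal R}$ equals $\theta_n(p(x))$ for some $n$. Let $A_n = \{x : n \text{ is minimal with } p(\tilde\sigma(x)) = \theta_n(p(x))\}$; these form a measurable partition of $\tilde X$. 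On $A_n$, injectivity of $p$ on $[x]_{\tilde{\mathcal R}}$ gives $\tilde\sigma(x) = \tilde\theta_n(x)$, so writing $B_n = \tilde\sigma(A_n)$ yields $\mathbf{1}_{B_n} u(\tilde\sigma) = \mathbf{1}_{B_n} u(\tilde\theta_n)$, and hence $u(\tilde\sigma) = \sum_n \mathbf{1}_{B_n} u(\tilde\theta_n)$ in the strong operator topology. Since $L(\tilde{\mathcal R})$ is generated as a von Neumann algebra by $L^\infty(\tilde X)$ and the $u(\tilde\sigma)$ with $\tilde\sigma \in [\tilde{\mathcal R}]$, the moreover follows.
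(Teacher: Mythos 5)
Your proposal is correct and follows essentially the same route as the paper's proof (which is itself a reconstruction of Popa's Proposition 1.4.3). The one point where the argument is slightly glib is the claim that ``(i) and (ii) imply $\pi$ extends linearly to a $*$-homomorphism on $\mathcal{A}_0$'': these relations only give a $*$-homomorphism on formal expressions, and one must still check that it descends to $\mathcal{A}_0 \subset L(\mathcal R)$, i.e.\ that $\sum_i a_i u(\theta_i) = 0$ in $L(\mathcal R)$ forces $\sum_i \tilde a_i u(\tilde\theta_i) = 0$. The paper handles this head-on by verifying that the map preserves inner products $\langle a u(\theta), b u(\rho)\rangle$; your later trace-preservation step combined with GNS does address it implicitly (a formal $*$-homomorphism preserving the trace kills the null ideal), but the order of presentation makes it look like well-definedness has been established before trace preservation is discussed. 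For (3), the paper avoids the Fourier decomposition over a generating group $\tilde\Gamma$ by instead fixing an arbitrary $\theta \in [\tilde{\mathcal R}]$ and taking the conditional expectation $b_\theta = \mathbb{E}_{L^\infty(\tilde X)}(y u(\theta)^*)$; this is equivalent to your Fourier-coefficient computation and leads to the same core step $p(x) = p(\theta^{-1}x) \Rightarrow \theta^{-1}x = x$. The moreover argument is identical in both.
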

\begin{proof}
We denote by $\tau$ and $\langle.,.\rangle$  the canonical trace and inner product on $L(\mathcal R)$ (resp. $L(\mathcal{\tilde R})$) and by $\E_{L^{\infty}(X)}$ (resp. $\E_{L^{\infty}(\tilde X)}$) the conditional expectations onto $L^{\infty}(X)$ (resp. $L^{\infty}(\tilde X)$).
Note first that the map $\pi: L^\infty(X) \to L^\infty(\tilde X)$ given by $\pi(a) = \tilde a$ defines a trace preserving $*$-homomorphism. 
Moreover, if $\theta \in [\R]$, then  
\begin{align*}
\pi(\E_{L^{\infty}(X)}(u({\theta}))) = {\bf 1}_{\{x \in X|\theta(x) = x\}} \circ p 
= {\bf 1}_{\{x \in \tilde X| \theta(p(x)) = p(x)\}} 
= {\bf 1}_{\{x \in \tilde X: \tilde\theta(x) = x\}} 
= \E_{L^{\infty}(\tilde X)}(u({\tilde\theta})).
\end{align*}
Let $D \subset L(\R)$ be the $*$-subalgebra consisting of finite sums of the form $\sum_{\theta} a_\theta u({\theta})$. 
Then $D\subset L(\R)$ is dense in the strong operator topology, and for every $a, b \in L^\infty(X)$, $\theta,\rho \in [\R]$, we have
\begin{align*}
\<\pi(a) u({\tilde\theta}), \pi(b) u({\tilde\rho})\> 
&= \tau(\pi(b^*a) u({\tilde\theta \tilde\rho^{-1}})) 
= \tau( \pi(b^*a) \E_{L^{\infty}(\tilde X)}(u({\widetilde{\theta\rho^{-1}}})))
\\ &= \tau(\pi(b^*a) \pi(\E_{L^{\infty}(X)}(u({\theta\rho^{-1}})))) 
= \tau(b^*a\E_{L^{\infty}(X)}(u({\theta\rho^{-1}})))
= \<au({\theta}), b u({\rho})\>. 
\end{align*}
Therefore, the map $\pi: D \to L(\tilde \R)$ defined by $\sum_{\theta} a_\theta u({\theta}) \mapsto \sum_{\theta}\tilde a_\theta u({\tilde\theta})$ is well-defined and trace-preserving. Moreover,  since 
$\pi(u(\theta)a{u(\theta})^*) = \pi(a \circ \theta^{-1}) = \pi(a) \circ \tilde{\theta}^{-1}
= u({\tilde\theta})au({\tilde\theta})^*$
and the maps $a \mapsto \tilde a$ and $u({\theta}) \mapsto u({\tilde\theta})$ are $*$-homomorphisms, $\pi$ is a $*$-homomorphism.
Since $\pi$ is trace-preserving, it extends to a trace-preserving $*$-homomorphism $\pi: L(\R) \to L(\tilde \R)$ satisfying (1) and (2). 

To prove (\ref{LC: commutant}), let $y \in \pi(L^\infty(X))' \cap L(\tilde \R)$. 
Fix $\theta \in [\tilde \R]$ and set $b_\theta= \E_{L^{\infty}(\tilde X)}(yu(\theta)^*)$. 
Then for any $a \in\pi(L^{\infty}(X))$ we have
\begin{align*}
b_{\theta}a = ab_{\theta} = \E_{L^{\infty}(\tilde X)}(ayu({\theta})^*) = \E_{L^{\infty}(\tilde X)}(yau({\theta})^*) = b_{\theta}(u({\theta})au({\theta})^*).
\end{align*}
Thus, for almost every $x \in \text{supp}(b_\theta) \subset \tilde X$, we have $p(x) = p(\theta^{-1}(x))$. Since   $p|_{[x]_{\tilde \R}}$ is injective, we derive that $x = \theta^{-1}(x)$, for almost every $x \in \text{supp}(b_\theta)$. Hence, for any $b \in L^{\infty}(\tilde X)$ and almost every $x \in \tilde X$, we have $b_{\theta}(x)[u(\theta) bu({\theta})^*](x) = b_{\theta}(x)b({\theta}^{-1}x) = b_{\theta}(x)b(x)$. Therefore, we get that $b_{\theta}u({\theta}) \in L^{\infty}(\tilde X)' \cap L(\tilde{\mathcal R})= L^{\infty}(\tilde X)$. Since this holds for any $\theta\in [\tilde{\mathcal{R}}]$, we conclude that $y \in L^{\infty}(\tilde X)$.

Finally, assume that $\tilde \R$ is an extension of $\R$, and let $\theta \in [\tilde \R]$. Then for almost every $x \in \tilde X$ we have $(p(x), p(\theta(x))) \in \R$.  Hence
$\tilde X = \bigcup_{\rho \in [\R]} \{x \in \tilde X| p(\theta(x)) = \rho(p(x))\}
= \bigcup_{\rho \in  [\R]} \{x \in \tilde X|\theta(x) = \tilde\rho(x)\}.$
Thus, we can write $u_\theta = \sum_{n = 1}^\infty z_nu({\tilde\rho_n})$, for some $\{\rho_n\} \subset [\R]$ and projections $\{z_n\} \subset L^{\infty}(\tilde X)$ with $\sum_{n = 1}^\infty z_n = 1$. 
In particular, this gives the moreover conclusion.
\end{proof}
\end{Hoff_Commands}

\subsection{Graphed equivalence relations and isoperimetric constants}\label{isop}
Let $\mathcal R$ be a countable pmp equivalence relation on a probability space $(X,\mu)$. 
  A {\it graphing} of $\mathcal R$ is an at most countable family $\{\theta_i\}_{i\geq 1}\subset [[\mathcal R]]$. A graphing $\{\theta_i\}_{i\geq 1}$ is {\it generating}  if $\mathcal R$ is the smallest equivalence relation which contains the graph of $\theta_i$ for all $i\geq 1$.

Any graphing $\{\theta_i:A_i\rightarrow B_i\}_{i\geq 1}$  gives rise to a graph structure on $\mathcal R$  (see \cite{Ga99}). 
More precisely, for $x\in X$, we define an unoriented (multi-)graph $\mathcal G_{x}=([x]_{\mathcal R},E_x)$ whose vertex set is the equivalence class $[x]_{\mathcal R}$ and whose edge set $E_x$ consists of the pairs $(y,\theta_i(y))$, for every $i\geq 1$ and $y\in [x]_{\mathcal R}\cap A_i$.  Note  that we allow multiple edges between two given vertices.  Therefore, if the graphing is finite and given by $\{\theta_i\}_{i=1}^n$, with $\theta_1,...,\theta_n\in [\mathcal R]$, then $\mathcal G_x$ is a $2n$-regular graph.

Let $\mathcal G=(V,E)$ be an unoriented infinite (multi-)graph with vertex set $V$ and edge set $E$. Given a non-empty finite set $F\subset V$, let $\partial_{E}F$ be the set of edges which have exactly one endpoint in $F$. The {\it edge-isoperimetric} constant of $\mathcal G$ is defined as $$\iota(\mathcal G)=\inf\left\{\frac{|\partial_{E}F|}{|F|}\;\Big |\;\; \emptyset\not=F\subset V\;\text{finite subset}\right\}.$$ 

If $\mathcal R$ is an amenable countable pmp equivalence relation, then $\iota(\mathcal G_x)=0$, for almost every $x\in X$, for any finite graphing $\{\theta_i\}_{i=1}^n$ (see \cite[Theorem 2]{Ka97}). On the other hand, the converse is false. More precisely, \cite[Section 3]{Ka97} provides an example of a non-amenable equivalence relation $\mathcal R$ which admits a finite generating graphing $\{\theta_i\}_{i=1}^n$ such that  $\iota(\mathcal G_x)=0$, for almost every $x\in X$. 

Nevertheless, the combination of Lemma \ref{spec} and Lemma \ref{iso} below shows that if $\mathcal R$ is non-amenable and ergodic, then we can find a graphing $\{\theta_i\}_{i=1}^n$ with $\theta_1,...,\theta_n\in [\mathcal R]$ such that  the associated graphs satisfy $\iota(\mathcal G_x)>0$, for almost every $x\in X$. 

\begin{lemma}\label{iso}
Let $\mathcal R$ be a countable pmp equivalence relation on a probability space $(X,\mu)$ and $\theta_1,...,\theta_n\in [\mathcal R]$. For every $x\in X$, consider the unoriented graph $\mathcal G_{x}=([x]_{\mathcal R},E_x)$ defined as above.

Then $\iota(\mathcal G_x)\geq 2n-\|\sum_{i=1}^n(u(\theta_i)+u(\theta_i^{-1}))\|$, for almost every $x\in X$.
\end{lemma}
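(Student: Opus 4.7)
The plan is to reduce the inequality to the standard spectral bound on the isoperimetric constant of a regular multigraph, applied classwise via a direct-integral decomposition of $L^2(\mathcal R, m)$. Using the symmetry of $m$ and its disintegration as counting measure on the fibers of the second-coordinate projection $\mathcal R \to X$, one has the natural identification $L^2(\mathcal R, m) \cong \int_X^\oplus \ell^2([y]_{\mathcal R})\, d\mu(y)$, in which a function $f$ corresponds to the measurable field $y \mapsto f_y := f(\cdot, y)$. The formula $(u(\theta)f)(x, y) = f(\theta^{-1}(x), y)$ shows that $u(\theta)$ is decomposable along this direct integral, with fiber $u_y(\theta)\colon \xi \mapsto \xi \circ \theta^{-1}$, a unitary on $\ell^2([y]_{\mathcal R})$ (since $\theta \in [\mathcal R]$ restricts to a bijection of every $\mathcal R$-class). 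Consequently $T = \int_X^\oplus T_y\, d\mu(y)$ with $T_y = \sum_{i=1}^n(u_y(\theta_i) + u_y(\theta_i^{-1}))$, and hence $\|T_y\| \leq \|T\|$ for almost every $y \in X$.

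Next, I would identify $T_y$ with the adjacency operator $A_y$ of the multigraph $\mathcal G_y = ([y]_{\mathcal R}, E_y)$. By the construction of $E_y$, the neighbors of a vertex $x \in [y]_{\mathcal R}$ in $\mathcal G_y$, counted with multiplicity, are precisely $\theta_1(x), \ldots, \theta_n(x), \theta_1^{-1}(x), \ldots, \theta_n^{-1}(x)$, so for every $\xi \in \ell^2([y]_{\mathcal R})$, $(A_y \xi)(x) = \sum_{i=1}^n\bigl(\xi(\theta_i(x)) + \xi(\theta_i^{-1}(x))\bigr) = (T_y \xi)(x)$; thus $A_y = T_y$ and $\|A_y\| \leq \|T\|$ almost surely.

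Finally, for each such $y$ I would apply the standard spectral-isoperimetric inequality for $2n$-regular multigraphs to $\mathcal G_y$: given any non-empty finite $F \subset [y]_{\mathcal R}$ and setting $\xi = {\bf 1}_F \in \ell^2([y]_{\mathcal R})$, partitioning $\sum_{x \in F}\deg_{\mathcal G_y}(x)$ according to whether the other endpoint of each incident edge lies in $F$ or $F^c$ yields $2n|F| = \sum_{x \in F}\deg_{\mathcal G_y}(x) = \langle A_y \xi, \xi\rangle + |\partial_E F|$. Using $\langle A_y \xi, \xi\rangle \leq \|A_y\|\cdot\|\xi\|^2 \leq \|T\|\cdot|F|$ gives $|\partial_E F| \geq (2n - \|T\|)|F|$, and taking the infimum yields $\iota(\mathcal G_y) \geq 2n - \|T\|$ almost surely. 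The only delicate point is verifying the direct-integral decomposition — that $u(\theta)$ and hence $T$ are decomposable with respect to it — but this is immediate from the formula defining $u(\theta)$, so I do not anticipate a substantial obstacle.
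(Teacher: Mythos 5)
Your proof is correct, and it arrives at the conclusion by a genuinely different route than the paper. The paper argues by contradiction: it assumes the set $S$ of $y$ with $\iota(\mathcal G_y) < 2n - \|T\|$ has positive measure, makes a measurable selection of finite witnesses $A_y \subset [y]_{\mathcal R}$ of uniformly bounded size with $|\partial_{E_y}(A_y)| < (2n-\|T\|)|A_y|$, forms the single test vector $\mathbf{1}_A \in L^2(\mathcal R, m)$ from $A = \{(x,y) : y \in S,\ x \in A_y\}$, and computes $\langle T\mathbf{1}_A, \mathbf{1}_A\rangle > \|T\|\cdot\|\mathbf{1}_A\|^2$ directly — a contradiction. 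Your argument instead diagonalizes $T$ as a decomposable operator over the direct integral $L^2(\mathcal R, m) \cong \int_X^\oplus \ell^2([y]_{\mathcal R})\,d\mu(y)$ (using the flip-invariance of $m$ to disintegrate over the second coordinate), identifies each fiber $T_y$ with the adjacency operator $A_y$ of $\mathcal G_y$, reads off $\|A_y\| \le \|T\|$ a.e.\ from $\|T\| = \operatorname*{ess\,sup}_y \|T_y\|$, and then applies the elementary classwise bound $\iota(\mathcal G_y) \ge 2n - \|A_y\|$ via the identity $2n|F| = \langle A_y\mathbf{1}_F,\mathbf{1}_F\rangle + |\partial_E F|$. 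The underlying combinatorial identity is the same in both proofs; what differs is where the "almost everywhere" is manufactured. The paper handles it by an explicit measurable selection of isoperimetric witnesses (a small but non-automatic step — one must bound $\sup_y |A_y|$ on a positive-measure subset), whereas you outsource it to the ess-sup characterization of the norm of a decomposable operator, at the cost of having to set up the measurable field of Hilbert spaces $y\mapsto \ell^2([y]_{\mathcal R})$ (straightforward via Feldman--Moore or Lusin--Novikov, but not free). Your version is cleaner conceptually and makes transparent that the statement is really a classwise spectral estimate promoted to an a.e.\ statement; the paper's version is more self-contained and avoids direct-integral machinery. Both are sound.
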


{\it Proof.}  Denote $\delta=2n-\|\sum_{i=1}^n(u(\theta_i)+u(\theta_i^{-1}))\|$. Let $S$ be the set of $y\in X$ such that $\iota(\mathcal G_y)<\delta$.  Assume by contradiction that $\mu(S)>0$. For all $y\in S$ we can find a finite set $A_y\subset [y]_{\mathcal R}$ satisfying $|\partial_{E_y}(A_y)|<\delta |A_y|$ in such a way that the set $A:=\{(x,y)\in\mathcal R|y\in S,x\in A_y\}$ is Borel. Moreover, after replacing $S$ with a non-null Borel subset, we may assume that $\sup_{y\in S}|A_y|<\infty$.

If we view ${\bf 1}_A\in L^2(\mathcal R,m)$,  then $\langle u(\theta)({\bf 1}_A),{\bf 1}_A\rangle=\int_{S}|\{x\in A_y|\theta^{-1}(x)\in A_y\}|\;\text{d}\mu(y)$, for all $\theta\in[\mathcal R]$. 
By using this identity we derive that \begin{align*}&\left\langle \left(\sum_{i=1}^n(u(\theta_i)+u(\theta_i^{-1}))\right)({\bf 1}_A),{\bf 1}_A\right\rangle \\&=\int_{S}\sum_{x\in A_y}\big(|\{1\leq i\leq n|\theta_i^{-1}(x)\in A_y\}|+|\{1\leq i\leq n|\theta_i(x)\in A_y\}|\big)\;\text{d}\mu(y)\\&=\int_{S}\big(2n|A_y|-|\partial_{E_y}(A_y)|\big)\;\text{d}\mu(y)>(2n-\delta)\int_{S}|A_y|\;\text{d}\mu(y)\\&=(2n-\delta)\; m(A)=(2n-\delta)\langle {\bf 1}_A,{\bf 1}_A\rangle.\end{align*}

This contradicts the fact that $\|\sum_{i=1}^n(u(\theta_i)+u(\theta_i^{-1}))\|=2n-\delta.$\hfill$\square$

\subsection{Cost of equivalence relations} 
Let $\mathcal R$ be a countable pmp equivalence relation on a probability space $(X,\mu)$. The cost of a graphing $\{\theta_i:A_i\rightarrow B_i\}_{i\geq 1}$  is the sum of the measures of the domains: $\sum_{i\geq 1}\mu(A_i)$.  The {\it cost} of $\mathcal R$ is defined as  the infimum of the cost of all generating graphings  of $\mathcal R$ \cite[Defintion I.5]{Ga99}.

Let $A\subset X$ be a Borel set of positive measure and denote by $\mathcal R\resto A:=\mathcal R\cap (A\times A)$ the {\it restriction of $\mathcal R$ to $A$}. Then the {\it normalized cost} of $\mathcal R\resto A$ is defined as the cost of $\mathcal R\resto A$ with respect to the  probability measure on $A$ given by $\mu_A(B)=\mu(B)/\mu(A)$, for any Borel set $B\subset A$. 

In the proof of our main result we will use the following theorem. 

\begin{theorem}\label{hj}
Assume that $\mathcal R$ is ergodic and has cost in $(1,\infty)$.
Then there exists a free ergodic pmp action $\mathbb F_2\curvearrowright (X,\mu)$ such that $\mathcal R(\mathbb F_2\curvearrowright X)\subset\mathcal R$, almost everywhere.
\end{theorem}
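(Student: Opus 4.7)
The plan is to invoke the combination of results from the cost-theoretic framework of Kechris-Miller \cite{KM04} and Pichot \cite{Pi05} together with Hjorth's theorem \cite{Hj06}, which together imply exactly that any ergodic pmp equivalence relation of cost strictly greater than $1$ (and finite) contains the orbit equivalence relation of a free ergodic pmp action of $\mathbb{F}_2$.

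The argument proceeds in two steps. First, I would use the techniques of \cite{KM04, Pi05}, which refine Gaboriau's foundational work on cost, to produce inside $\mathcal{R}$ an ergodic subequivalence relation $\mathcal{S} \le \mathcal{R}$ on $(X,\mu)$ which is simultaneously \emph{treeable} and of integer cost $n \ge 2$. The input here is a generating graphing of $\mathcal{R}$ whose cost is close to the cost of $\mathcal{R}$; from such a graphing one carves out, by careful surgery on partial isomorphisms in $[[\mathcal{R}]]$, a free sub-graphing (i.e., a treeing) whose total mass is exactly an integer $\ge 2$, and which generates an ergodic relation. Pichot's theorem on the interval structure of costs of ergodic subrelations guarantees the necessary flexibility in choosing the cost, while the cost-attaining constructions of Kechris-Miller provide the treeability.

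Second, I would apply Hjorth's theorem \cite{Hj06}: any ergodic treeable pmp equivalence relation of integer cost $n \ge 2$ is isomorphic to the orbit equivalence relation of a free ergodic pmp action of $\mathbb{F}_n$ on $(X,\mu)$. Applied to $\mathcal{S}$, this produces a free ergodic pmp action $\mathbb{F}_n \curvearrowright (X,\mu)$ with $\mathcal{R}(\mathbb{F}_n \curvearrowright X) = \mathcal{S} \subseteq \mathcal{R}$. If $n = 2$ there is nothing more to do; if $n > 2$, one restricts to an embedded copy $\mathbb{F}_2 \hookrightarrow \mathbb{F}_n$, chosen so that the restricted action remains ergodic (which can be arranged, for instance, by noting that the $\mathbb{F}_n$-action produced by Hjorth's construction can be taken to be mixing, so that its restriction to any non-amenable subgroup is still ergodic). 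The restricted action is automatically free, its orbits are contained in those of $\mathcal{R}$, and ergodicity has been arranged, completing the proof.

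The principal obstacle is the first step: producing the treeable, ergodic subrelation of prescribed integer cost. Treeability, ergodicity, and exact integer cost must be achieved simultaneously inside an a priori arbitrary (possibly non-treeable) ergodic relation of cost $\in (1,\infty)$, and this is precisely the delicate cost-manipulation that the combined results of \cite{KM04, Pi05} provide.
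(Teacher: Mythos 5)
The paper's own ``proof'' of Theorem~\ref{hj} is a bare citation of Propositions~13 and~14 of~\cite{GL07}, which in turn invoke Hjorth~\cite{Hj06} and Kechris--Miller/Pichot~\cite{KM04,Pi05}. Your attempt to reconstruct that argument uses the right cast of references, but the reduction you describe in Step~1 has a genuine gap.

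The hypothesis is only that $\mathcal{C}(\mathcal{R}) \in (1,\infty)$, and in the applications made in this paper (Propositions~\ref{lem:2} and~\ref{lem:4}) the cost is typically close to~$1$. If $\mathcal{C}(\mathcal{R}) < 2$, then a generating graphing of $\mathcal{R}$ whose cost is ``close to the cost of $\mathcal{R}$'' has total mass strictly less than~$2$, and therefore \emph{no} sub-graphing of it can have total mass equal to an integer $n \geq 2$. Your recipe---carve a free (tree) sub-graphing of integer mass $\geq 2$ out of such a graphing---is vacuous precisely in the range $\mathcal{C}(\mathcal{R}) \in (1,2)$, which cannot be excluded. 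Neither the cost-flexibility results of Pichot nor the cost-attainment lemmas of Kechris--Miller repair this: whatever they say, they cannot extract a mass-$\geq 2$ treeing from a graphing whose total mass is under~$2$. The actual resolution in~\cite{GL07} is qualitatively different: one first produces a treeable ergodic subrelation of cost in $(1,2]$, then \emph{compresses} to a Borel subset $A \subseteq X$ of measure $\mathcal{C}-1$ so that the normalized restricted cost becomes exactly~$2$, applies Hjorth's theorem on $A$, and then uses a further device to transfer back to a free ergodic $\mathbb{F}_2$-action on all of $(X,\mu)$. The compression/induction step is not optional, and it is not the same as taking a sub-graphing on the full space.

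Your Step~2 also has an unjustified point. Hjorth's theorem~\cite{Hj06} produces a free \emph{ergodic} action of $\mathbb{F}_n$ generating the given treeable relation of integer cost~$n$; it does not say that this action can be taken to be mixing, and I know of no result that would let one simply choose a mixing generator of a fixed treeable equivalence relation. For $n > 2$ the subgroup $\mathbb{F}_2 \leq \mathbb{F}_n$ necessarily has infinite index, so ergodicity of the restricted $\mathbb{F}_2$-action is genuinely at risk, and ``can be arranged, for instance, by noting that \ldots\ can be taken to be mixing'' is an assertion that needs justification. This second issue disappears if one targets cost exactly~$2$ from the start, which is what the Gaboriau--Lyons argument does, but that brings one back to the first gap: your method does not produce cost exactly~$2$ on the full space when $\mathcal{C}(\mathcal{R}) < 2$.
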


This theorem is the combination of Propositions 13 and 14 from \cite{GL07}. Its proof relies on  a theorem due to G. Hjorth \cite{Hj06}  and on a result from \cite{KM04,Pi05} (see \cite{GL07} for details).


\section{Bernoulli extensions of equivalence relations}

In this section, we first recall the construction of Bernoulli extensions and prove that Bernoulli extensions preserve ergodicity. 
We then study isomorphisms of Bernoulli extensions and their behavior with respect to restrictions to subequivalence relations and compressions.

\subsection{Bernoulli extensions and ergodicity} 
Let $\mathcal R$ be a countable pmp equivalence relation on a probability space $(X,\mu)$. Let $(K,\kappa)$ be a probability space.

We denote by $X_K$ the set of pairs $(x,\omega)$ with $x\in X$ and $\omega\in K^{[x]_{\mathcal R}}$. We endow $X_K$ with the smallest $\sigma$-algebra of sets which makes the maps $(x,\omega)\mapsto x$ and $(x,\omega)\mapsto\omega(\theta(x))$ measurable, for every $\theta\in [\mathcal R]$. We also endow $X_K$ with the probability measure $\mu_{\kappa}$ given by
$$d\mu_\kappa(x,\omega) = d\kappa^{[x]_{\mathcal R}}(\omega) ~d\mu(x).$$
Lastly, we denote by $\cR_K$ the equivalence relation on $X_K$ given by $(x,\omega) \cR_K (y,\xi)$ iff $x\cR y$ and $\omega =\xi$, and call it the {\it Bernoulli extension of $\mathcal R$ with base space $(K,\kappa)$} (see \cite[Section 11]{Bo14}).

 \begin{lemma}\label{ergodic} If $\mathcal R$ is ergodic, then $\mathcal R_K$ is ergodic. 
 \end{lemma}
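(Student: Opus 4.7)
The plan is to show that any $\mathcal R_K$-invariant Borel subset $A\subset X_K$ has $\mu_\kappa$-measure $0$ or $1$. First I would disintegrate: set $f(x)=\kappa^{[x]_{\mathcal R}}(\{\omega:(x,\omega)\in A\})$. The $\mathcal R_K$-invariance of $A$ gives $(x,\omega)\in A\Leftrightarrow (\theta(x),\omega)\in A$ for every $\theta\in[\mathcal R]$, and since $[\theta(x)]_{\mathcal R}=[x]_{\mathcal R}$ this forces $f(\theta(x))=f(x)$. By ergodicity of $\mathcal R$, $f$ is $\mu$-a.e.\ equal to some constant $c=\mu_\kappa(A)\in[0,1]$, so it remains to rule out $0<c<1$.

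To this end I would exploit the i.i.d.\ product structure of $\kappa^{[x]_{\mathcal R}}$ together with a Feldman--Moore realization $\mathcal R=\mathcal R(\Gamma\curvearrowright X)$ for a countable group $\Gamma$. Because $\mathcal R$ is ergodic on a non-atomic standard probability space its classes are a.e.\ infinite, so $\Gamma$ may be taken infinite. Suppose first that the $\Gamma$-action is free. Then the Borel map $\Psi:X\times K^{\Gamma}\to X_K$ defined by $\Psi(x,\tilde\omega)=(x,\omega)$ with $\omega(gx)=\tilde\omega(g)$ is a measure-preserving isomorphism under which $\mathcal R_K$ corresponds to the orbit equivalence relation of the diagonal action $\Gamma\curvearrowright X\times K^{\Gamma}$, where $\Gamma$ acts on $K^{\Gamma}$ by a Bernoulli-type shift (conjugate to the standard Bernoulli action via the inversion $g\mapsto g^{-1}$). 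Since this Bernoulli action of the infinite group $\Gamma$ is weakly mixing, combining it diagonally with the ergodic action $\Gamma\curvearrowright X$ yields an ergodic action, and hence $\mathcal R_K$ is ergodic, forcing $c\in\{0,1\}$.

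The main obstacle is the general case in which the Feldman--Moore representation cannot be chosen to be free. One remedy is to pass to an auxiliary free ergodic action on a separate standard space, take the product and then descend back to $\mathcal R$. Alternatively, one can argue directly via a Fock/chaos decomposition $L^2(X_K)=\bigoplus_{n\geq 0} H^{(n)}$ along the fibers, where $H^{(n)}$ corresponds to ``degree $n$'' polynomials in the $\omega$-coordinates. The ergodicity of $\mathcal R$ handles the layer $H^{(0)}=L^2(X)$, while for $n\geq 1$ the absence of a Borel transversal of $\mathcal R$ (a consequence of ergodicity with infinite classes) prevents any $\mathcal R_K$-invariant vector in $H^{(n)}$ from measurably singling out a canonical $n$-subset of each orbit, forcing the higher-order components to vanish.
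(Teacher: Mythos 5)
Your disintegration step and the analysis in the free Feldman--Moore case are both correct, and this first half is essentially the content of the paper's Proposition~\ref{exam} together with the standard fact that the product of an ergodic action with a weakly mixing (Bernoulli) action is ergodic. You also correctly identify the genuine obstacle: the Feldman--Moore realization $\mathcal R=\mathcal R(\Gamma\curvearrowright X)$ need not be free, in which case the map $\Psi$ is not well defined (distinct $g$ may have $gx=g'x$ but $\tilde\omega(g)\neq\tilde\omega(g')$). However, neither of the two fixes you sketch is carried out to the point of being a proof. The first (``pass to an auxiliary free ergodic action... and descend back'') is only a heuristic: it is not explained how passing to a product relation would allow one to descend invariant sets back to $\mathcal R_K$. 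The second (Fock/chaos decomposition) is a workable idea, but the stated reason --- that an invariant vector in $H^{(n)}$ would ``single out a canonical $n$-subset of each orbit'' --- is not correct as stated: an invariant vector determines a square-summable \emph{weight} over $n$-subsets of each class, not a canonical subset, and ruling this out requires an actual argument (e.g.\ a mass-transport computation showing that a non-negative Borel function with finite sum along $\mathcal R$-classes must vanish a.e.\ when classes are infinite and $\mathcal R$ is pmp). As written, that step is a gap.

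The paper avoids the freeness issue with a cleaner device: rather than working with the whole full group, it chooses a single \emph{ergodic} $\theta\in[\mathcal R]$ (which exists by \cite[Theorem 3.5]{Ke10}), lets $\tilde\theta(x,\omega)=(\theta(x),\omega)$, and proves that $\tilde\theta$ alone is ergodic on $X_K$. Using choice functions $\{\theta_i\}_{i=1}^N$ for the subrelation $\mathcal S=\langle\theta\rangle\leq\mathcal R$ (where $N=[\mathcal R:\mathcal S]$), one conjugates $\tilde\theta$ to the product $\theta\times\tau$, where $\tau$ is the Bernoulli $\mathbb Z$-shift on $(K^{\{1,\dots,N\}})^{\mathbb Z}$. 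Since $\theta$ generates a free $\mathbb Z$-action on the non-atomic space $X$, the conjugation is legitimate, and ergodicity of $\theta$ combined with weak mixing of $\tau$ gives ergodicity of $\tilde\theta$, hence of $\mathcal R_K\supseteq\mathcal R(\tilde\theta)$. In short: your instinct to reduce to ``ergodic $\times$ Bernoulli'' is exactly right, but the paper's route through a single ergodic element and choice functions is what makes the reduction rigorous; you should replace your gesture at the non-free case with this argument.
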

 
 {\it Proof.} Assume that $\mathcal R$ is ergodic. Then we can find $\theta\in [\mathcal R]$ which acts ergodically on $(X,\mu)$ (see \cite[Theorem 3.5]{Ke10}). We define $\tilde\theta\in [\mathcal R_K]$ by letting $\tilde{\theta}(x,\omega)=(\theta(x),\omega)$, for $(x,\omega)\in X_K$. Then, in order to conclude that $\mathcal R_K$ is ergodic, it suffices to prove that $\tilde\theta$ acts ergodically on $X_K$. 
 
 Let $\mathcal S\leq\mathcal R$  be the subequivalence relation generated by $\theta$. Since $\theta$ and hence $\mathcal S$ is ergodic, we can find $\{\theta_i\}_{i=1}^N\in [\mathcal R]$ such that for almost every $x\in X$ we have $\theta_i([x]_{\mathcal S})\cap\theta_j([x]_{\mathcal S}) = \emptyset$, for all $i\not=j$, and $[x]_{\mathcal R}=\cup_{i=1}^N\theta_i([x]_{\mathcal S})$ (see \cite[Lemma 1.1]{Io09}). Here, $N\in\mathbb N\cup\{\infty\}$ is the index of $\mathcal S$ in $\mathcal R$. 
 
 Now, we define $\sigma:X\times K^{\{1,...,N\}\times\mathbb Z}\rightarrow X_K$ by letting $\sigma(x,(k_{i,j})_{i\in\{1,...,N\},j\in\mathbb Z})=(x,\omega)$, where $\omega \in K^{[x]_{\mathcal R}}$ is given by $\omega(\theta_i\theta^j(x))=k_{i,j}$, for all $i\in\{1,...,N\}$ and $j\in\mathbb Z$. Further, we endow $X\times K^{\{1,...,N\}\times\mathbb Z}$ with the probability measure $\mu\times\kappa^{\{1,...,N\}\times\mathbb Z}$. Then it is clear that $\sigma$ is an isomorphism of probability spaces and that $$(\sigma^{-1}\circ\tilde\theta\circ\sigma)(x,k)=(\theta(x),(k_{i,j+1})),\;\;\;\text{for all}\;\; x\in X,\; k=(k_{i,j})_{i\in\{1,...,N\},j\in\mathbb Z}\in K^{\{1,...,N\}\times\mathbb Z}.$$ 
 
 Thus, $\tilde\theta$ is conjugate to the product $\theta\times\tau$ between $\theta$ and the Bernoulli shift  $\tau$ of $\mathbb Z$ on $(K^{\{1,...,N\}})^{\mathbb Z}$. Since $\theta$ is ergodic and $\tau$ is weakly mixing, we conclude that $\tilde\theta$ is ergodic. \hfill$\square$

Let us also note that if $\mathcal R$ is the orbit equivalence relation of some free action, then the Bernoulli extensions of $\mathcal R$ can be described explicitly.

 \begin{proposition}\label{exam}
 Assume that $\mathcal R=\mathcal R(\Gamma\curvearrowright X)$, for some essentially free pmp action $\Gamma\curvearrowright (X,\mu)$.

Then $\mathcal R_K$ is isomorphic to $\mathcal R(\Gamma\curvearrowright X\times K^{\Gamma})$.
 \end{proposition}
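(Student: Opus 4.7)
The plan is to exhibit an explicit isomorphism $\Phi: X\times K^\Gamma \to X_K$ of probability spaces that carries the orbit equivalence relation $\mathcal R(\Gamma\curvearrowright X\times K^\Gamma)$ onto $\mathcal R_K$. Let $X_0\subset X$ be the conull $\Gamma$-invariant Borel subset on which the action is free, so that for every $x\in X_0$ the map $\Gamma\to [x]_{\mathcal R}$, $g\mapsto g\cdot x$ is a bijection. For $(x,f)\in X_0\times K^\Gamma$ I define $\omega_{x,f}\in K^{[x]_{\mathcal R}}$ by
\[
\omega_{x,f}(g\cdot x)\; :=\; f(g^{-1}),\qquad g\in\Gamma,
\]
and set $\Phi(x,f)=(x,\omega_{x,f})$. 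Freeness ensures that $\omega_{x,f}$ is well-defined, and the inverse is given by $(x,\omega)\mapsto(x,f)$ with $f(g):=\omega(g^{-1}\cdot x)$. The Borel structure is respected because the map $(x,f)\mapsto \omega_{x,f}(\theta(x))=f(\theta(x)^{-1}\text{'s unique representative})$ can be written, after restricting to sets where $\theta\in[\mathcal R]$ acts as a fixed element of $\Gamma$, as $(x,f)\mapsto f(g^{-1})$ for a fixed $g$; summing over a countable Borel partition of $X$ according to which element of $\Gamma$ implements $\theta$ gives measurability.

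Next I check that $\Phi$ is measure-preserving. For each fixed $x\in X_0$, the bijection $\Gamma\to [x]_{\mathcal R}$, $g\mapsto g^{-1}\cdot x$, induces a measurable isomorphism $(K^\Gamma,\kappa^\Gamma)\to (K^{[x]_{\mathcal R}},\kappa^{[x]_{\mathcal R}})$ that sends $f\mapsto \omega_{x,f}$. Fubini then yields
\[
\Phi_*(\mu\times\kappa^\Gamma)\;=\;\mu_\kappa,
\]
since $d(\mu\times\kappa^\Gamma)(x,f)=d\mu(x)\,d\kappa^\Gamma(f)$ is pushed forward to $d\mu(x)\,d\kappa^{[x]_{\mathcal R}}(\omega)=d\mu_\kappa(x,\omega)$.

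Finally I verify the equivariance. For $g\in\Gamma$ and $(x,f)\in X_0\times K^\Gamma$, the diagonal action gives $g\cdot(x,f)=(g\cdot x,g\cdot f)$ with $(g\cdot f)(h)=f(g^{-1}h)$. Then for any $h\in\Gamma$,
\[
\omega_{g\cdot x,\,g\cdot f}(h\cdot(g\cdot x))\;=\;(g\cdot f)(h^{-1})\;=\;f(g^{-1}h^{-1})\;=\;f((hg)^{-1})\;=\;\omega_{x,f}((hg)\cdot x),
\]
so $\omega_{g\cdot x,g\cdot f}=\omega_{x,f}$ as elements of $K^{[x]_{\mathcal R}}$. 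Thus $\Phi(g\cdot(x,f))=(g\cdot x,\omega_{x,f})$ is $\mathcal R_K$-equivalent to $\Phi(x,f)=(x,\omega_{x,f})$. Conversely, if $\Phi(x,f)\,\mathcal R_K\,\Phi(y,h)$ then $y=g\cdot x$ for some (unique, by freeness) $g\in\Gamma$, and equality of the second coordinates forces $h=g\cdot f$ by reversing the computation above; hence $(y,h)=g\cdot(x,f)$. This establishes the isomorphism.

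The only potentially fiddly point is the bookkeeping for measurability of $\Phi$, but it is entirely routine once one partitions $X_0$ by which element of $\Gamma$ implements a given $\theta\in[\mathcal R]$; no substantial obstacle arises, since everything reduces to Fubini and the freeness of the action on a conull set.
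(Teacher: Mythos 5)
Your proposal is correct and takes essentially the same approach as the paper: the map $\Phi$ you define is exactly the inverse of the paper's isomorphism $\theta:X_K\to X\times K^\Gamma$, $\theta(x,\omega)=(x,\eta)$ with $\eta(g)=\omega(g^{-1}x)$, and the paper declares the verification immediate. You simply spell out the measurability, measure-preservation, and equivariance checks that the paper omits; all of them are correct.
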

{\it Proof.} Let $\theta:X_K\rightarrow X\times K^{\Gamma}$ be given by $\theta(x,\omega)=(x,\eta)$, where $\eta(g)=\omega(g^{-1}x)$, for every $g\in\Gamma$. It is immediate to see that $\theta$ is an isomorphism of probability spaces which implements an isomorphism between $\mathcal R_K$ and $\mathcal R(\Gamma\curvearrowright X\times K^{\Gamma})$.
\hfill$\square$

\subsection{Isomorphisms of Bernoulli extensions}\label{sec:isom}
Let $\mathcal R$ be a countable pmp equivalence relation on a probability space $(X,\mu)$.
Next, we study the isomorphism problem for Bernoulli extensions. For this we need the following definition:
\begin{definition}[Isomorphism of extensions]
Let $\tcR$, $\tcS$ be countable pmp equivalence relations on probability spaces $ (\tX,\tmu)$ and $(\tY,\tnu)$. Suppose that  $\pi:\tX \to X$, $\phi:\tY \to X$ are Borel maps which give  extensions of $\tcR$ and $\tcS$ over $\cR$, respectively. We say that the extensions $\tcR \to \cR$ and $\tcS \to \cR$  are {\it isomorphic} if there is an isomorphism $\psi:\tX \to \tY$ of $\tcR$ with $\tcS$ such that $\pi = \phi\circ \psi$. 
\end{definition}
Let $(K,\kappa)$ be a standard probability space. If $(K,\kappa)$ is purely atomic then we define its {\bf Shannon entropy} by
$$H(K,\kappa):=\sum_{k\in K} - \mu(\{k\}) \log (\mu(\{k\})).$$
By convention $0\cdot \log(0)=0$. Otherwise, we set $H(K,\kappa):=+\infty$. 

\begin{thm}
\label{thm:isomorphism} 
 Let $(K,\kappa), (L,\lambda)$ be probability spaces with the same Shannon entropy. 
 
 Then the corresponding Bernoulli extensions  of $\cR$ are isomorphic.  
\end{thm}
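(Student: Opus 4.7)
Plan: An extension-isomorphism $\psi:X_K\to X_L$ must commute with the projection to $X$, hence necessarily takes the form $\psi(x,\omega)=(x,\psi_x(\omega))$. Because $(x,\omega)\,\cR_K\,(y,\xi)$ holds exactly when $x\,\cR\,y$ and $\omega=\xi$ (as functions on $[x]_\cR=[y]_\cR$), compatibility with $\cR_L$ forces $\psi_x$ to depend only on the $\cR$-orbit $[x]_\cR$, and to be a measure-preserving bijection $K^{[x]_\cR}\to L^{[x]_\cR}$. The task thus reduces to producing, Borel-measurably, a family $\{\psi_c:K^c\to L^c\}$ indexed by orbits $c$, in such a way that $\psi_c$ does not depend on the base point $x\in c$.

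In the easy case, when $(K,\kappa)$ and $(L,\lambda)$ are isomorphic as measure spaces---which in particular covers the non-atomic case, where both are isomorphic to $([0,1],\mathrm{Leb})$ (as happens whenever $H(K)=H(L)=+\infty$ and neither is purely atomic)---one fixes a measure-preserving isomorphism $\pi:K\to L$ and sets $\psi(x,\omega)=(x,\pi\circ\omega)$; all of the required properties are immediate. For the general case, my plan is to apply the Feldman--Moore theorem to write $\cR=\cR(\Gamma\curvearrowright X)$ for a countable group $\Gamma$, which---after enlarging---we may take to be a free group of countable rank, hence sofic. Proposition \ref{exam} identifies $X_K$ with $X\times K^\Gamma$ equivariantly (for free actions), and a Bernoulli-isomorphism theorem in the sofic setting (Bowen's extension of Ornstein's theorem) yields a $\Gamma$-equivariant measure-preserving bijection $\phi:K^\Gamma\to L^\Gamma$ precisely under the hypothesis $H(K)=H(L)$; setting $\psi(x,\eta):=(x,\phi(\eta))$, the $\Gamma$-equivariance automatically ensures $\psi$ sends $\cR_K$-orbits to $\cR_L$-orbits while commuting with projection to $X$.

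The main obstacle is the possibility that $\cR$ does not arise from a free action of any group (\cite{Fu99}), so one must work with non-free $\Gamma$-actions. In that setting $X_K$ embeds into $X\times K^\Gamma$ as the subset of $(x,\eta)$ for which $\eta\in K^\Gamma$ is invariant under the stabilizer $\Gamma_x$, and since the $\Gamma$-equivariance of $\phi$ automatically preserves this stabilizer-invariant subspace, the construction still descends to the Bernoulli extensions. Borel measurability of the resulting $\psi$ and verification that the induced map is a measure-preserving bijection between $X_K$ and $X_L$ are the remaining technical points, which should follow routinely from the properties of $\phi$ and the standard Borel structure on $X$.
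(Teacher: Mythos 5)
Your proposal takes a genuinely different route from the paper's, and unfortunately the part of it that is supposed to handle the general case has a gap that I don't see how to repair.

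The paper's proof avoids groups entirely: fix a single ergodic $\theta\in[\cR]$ (exists by \cite[Theorem~3.5]{Ke10}), so that each $\cR$-class $[x]_{\cR}$ partitions into bi-infinite $\theta$-orbits, and hence $K^{[x]_{\cR}}$ is a countable product of copies of $K^{\Z}$. Applying a fixed Ornstein isomorphism $\Phi\colon K^{\Z}\to L^{\Z}$ independently to each $\theta$-orbit gives a fiberwise measure-preserving bijection $\omega\mapsto\omega'$ that manifestly depends only on $\omega$ (not on the basepoint $x$), and therefore defines an isomorphism $\Psi(x,\omega)=(x,\omega')$ of the two extensions. This uses nothing beyond the classical $\Z$-Bernoulli isomorphism theorem.

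Your route, by contrast, realizes $\cR$ as an orbit equivalence relation via Feldman--Moore and invokes a Bernoulli isomorphism theorem over a countable free (hence sofic) group. In the \emph{free-action} case this works: Proposition~\ref{exam} identifies $\cR_K\to\cR$ with the product extension $X\times K^{\Gamma}\to X$, a $\Gamma$-equivariant $\phi\colon K^{\Gamma}\to L^{\Gamma}$ induces $\psi(x,\eta)=(x,\phi(\eta))$, and this commutes with projection. It is a valid argument, but it already trades the elementary Ornstein theorem for the considerably heavier sofic machinery. More importantly, as you note citing \cite{Fu99}, one cannot always realize $\cR$ by a free action, and your proposed repair for the non-free case is where the argument breaks. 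When $\Gamma_x$ is nontrivial, the embedding of the fiber $K^{[x]_{\cR}}$ into $K^{\Gamma}$ lands in the set $\mathrm{Fix}(\Gamma_x)$ of configurations constant on right $\Gamma_x$-cosets, and this set is $\kappa^{\Gamma}$-\emph{null}. The Bernoulli isomorphism $\phi$ is only defined $\kappa^{\Gamma}$-almost everywhere and is only measure-preserving for $\kappa^{\Gamma}$; there is no reason it should be defined on the null set $\mathrm{Fix}(\Gamma_x)$ at all, let alone preserve the (singular) measure $\kappa^{\Gamma_x\backslash\Gamma}$ living there. Saying that ``$\Gamma$-equivariance of $\phi$ automatically preserves this stabilizer-invariant subspace'' conflates an a.e.\ statement with a pointwise one; equivariance up to null sets tells you nothing about the restriction of $\phi$ to a null set. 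In effect you would need an Ornstein-type theorem over each coset space $\Gamma_x\backslash\Gamma$ simultaneously and measurably in $x$, which is not supplied by the sofic Bernoulli theorem and is not obviously available. The paper's construction sidesteps the problem entirely because the decomposition into $\theta$-orbits works inside every class regardless of whether $\cR$ comes from a free action.
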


\begin{proof}
By Ornstein's Isomorphism Theorem \cite{Or70a, Or70b} the Bernoulli shifts $\Z \cc (K,\kappa)^\Z$ and $\Z \cc (L,\lambda)^\Z$ are isomorphic. Let $\Phi:K^\Z \to L^\Z$ be such an isomorphism. 

Fix an aperiodic element  $\theta \in [\cR]$ (see \cite[Theorem 3.5]{Ke10} for the proof of existence). Let $x\in X$ and $\omega\in K^{[x]_{\cR}}$. For $y \in [x]_\cR$, let $\omega^y \in K^\Z$ be the map given by
$$\omega^y(n) = \omega(\theta^n y).$$
Also, define
$\omega':[x]_\cR \to L$ by
$$\omega'(y) = \Phi(\omega^y)_0.$$
That is, $\omega'$ is the time 0 coordinate of $\Phi(\omega^y)$. Finally, define $\Psi: X_K \to X_L$ by 
$$\Psi(x,\omega)= (x,\omega').$$ 
The proof that $\Psi$ gives the desired isomorphism is similar to the proof of \cite[Lemma 3.1]{Bo12}. For the reader's convenience, the proof is sketched below.

To prove invertibility, given $\omega \in L^{[x]_\cR}$, define $\omega'':[x]_\cR \to K$ by
$$\omega''(y) = \Phi^{-1}(\omega^y)_0.$$
We claim that $(\omega')''=\omega$. Indeed:
$$(\omega')''(y) = \Phi^{-1}( (\omega')^y)_0 = \Phi^{-1}(n \mapsto \omega'(\theta^n y))_0 = \Phi^{-1}(n \mapsto \Phi(\omega^{\theta^n y})_0)_0.$$
Because
$$(\omega^{\theta^n y})_m = \omega(\theta^{m+n}y) = \omega^y_{n+m}$$
and $\Phi$ is shift-equivariant,
$$ \Phi(\omega^{\theta^n y})_0 = \Phi(\omega^{y})_n.$$
Plug this back into the formula for $(\omega')''(y)$ to obtain
$$(\omega')''(y) = \Phi^{-1}(n \mapsto \Phi(\omega^{y})_n)_0 = \Phi^{-1}(\Phi(\omega^{y}))_0=\omega^y_0 = \omega(y).$$
Now define $\tPsi:X_L \to X_K$ by $\tPsi(x,\omega)=(x,\omega'')$. By the previous computation,
$$\tPsi(\Psi(x,\omega)) = (x,(\omega')'') = (x,\omega).$$
Similarly, $\Psi \tPsi$ is also the identity so $\tPsi$ is the inverse of $\Psi$.

Fix $x\in X$. It suffices to show that $\Psi$ maps the fiber measure $\kappa^{[x]_\cR}$ to the fiber measure $\lambda^{[x]_\cR}$. So let $\omega:[x]_\cR \to K$ be random with law $\kappa^{[x]_\cR}$. The restrictions of $\omega$ to the orbits of $\theta$ are jointly independent. Because $\omega'(y)$ depends only on the $\theta$-orbit of $y \in [x]_\cR$, the restrictions of $\omega'$ to the orbits of $\theta$ are also jointly independent. The law of any $\theta$-orbit is $\kappa^\Z$. Since $\Phi_*\kappa^\Z=\lambda^\Z$, it follows that $\Psi $ maps the fiber measure $\kappa^{[x]_\cR}$ to $\lambda^{[x]_\cR}$ as required.

\end{proof}

\begin{definition}
Theorem \ref{thm:isomorphism} allows us to define the {\it Bernoulli extension of $\cR$ with base entropy $t \in (0,\infty]$} to be any Bernoulli extension of $\cR$ with base space $(K,\kappa)$ satisfying $H(K,\kappa)=t$. 
\end{definition}

\subsection{Bernoulli extensions restricted to subequivalence relations} 

\begin{thm}
\label{thm:subeq}
Let $\cR$ be a countable ergodic pmp equivalence relation on a probability space $(X,\mu)$. Let $(K,\kappa)$ be a probability space and $\cR_K \to \cR$ the corresponding Bernoulli extension of $\cR$. Let $\cS\le \cR$ be an ergodic subequivalence relation and $\tcS \le \cR_K$ the lift of $\cS$ to $\cR_K$. 

Then the extension $\tcS \to \cS$ is isomorphic to the Bernoulli extension of $\cS$ with base space entropy equal to $H(K,\kappa)[\cR:\cS]$. 
\end{thm}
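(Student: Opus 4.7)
The plan is to give an explicit isomorphism of extensions by reparametrizing the Bernoulli labels over an $\cR$-class as tuple-valued labels over a single $\cS$-class. The combinatorial input needed is a Borel decomposition of each $\cR$-class into its $\cS$-cosets.

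First I would apply \cite[Lemma 1.1]{Io09} (used already in the proof of Lemma \ref{ergodic}) to obtain $\{\theta_i\}_{i=1}^N \subset [\cR]$, where $N = [\cR:\cS] \in \mathbb{N}\cup\{\infty\}$, such that $[x]_\cR = \bigsqcup_{i=1}^N \theta_i([x]_\cS)$ for almost every $x \in X$. Denoting by $X^{\cS}_{K^N}$ the Bernoulli extension space of $\cS$ with base $(K^N, \kappa^N)$, I would define a Borel map $\Psi: X_K \to X^{\cS}_{K^N}$ by $\Psi(x,\omega) = (x,\omega')$, where $\omega':[x]_\cS \to K^N$ is determined by $\omega'(z)_i := \omega(\theta_i(z))$.

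The main content of the proof is then to verify that $\Psi$ is an isomorphism of extensions over $\cS$. Bijectivity (modulo null sets) is immediate: the disjoint-union decomposition lets one invert $\Psi$ via $\omega(\theta_i(z)) = \omega'(z)_i$. Measure preservation follows because, conditionally on $x$, the collection $\{\omega(\theta_i(z))\}_{(i,z)}$ is i.i.d.\ $\kappa$ (the map $(i,z)\mapsto\theta_i(z)$ being a bijection onto $[x]_\cR$), so the $K^N$-valued random variables $\omega'(z)$ for $z \in [x]_\cS$ are i.i.d.\ $\kappa^N$. Compatibility with the equivalence relations is then automatic: $(x_1,\omega_1)\tcS(x_2,\omega_2)$ means $x_1 \cS x_2$ and $\omega_1 = \omega_2$ as functions on $[x_1]_\cR = [x_2]_\cR$, which under the bijection $\omega \leftrightarrow \omega'$ translates to $x_1 \cS x_2$ and $\omega'_1 = \omega'_2$, i.e.\ $\Psi(x_1,\omega_1)\,\cS_{K^N}\,\Psi(x_2,\omega_2)$. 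The projection to $X$ is preserved trivially, so $\Psi$ is an isomorphism of extensions over $\cS$.

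To conclude, I would compute $H(K^N, \kappa^N) = N \cdot H(K,\kappa) = [\cR:\cS]\cdot H(K,\kappa)$, checking the degenerate cases $N = \infty$ and $H(K,\kappa) \in \{0,\infty\}$ directly from the paper's conventions on Shannon entropy. Applying Theorem \ref{thm:isomorphism} then identifies the Bernoulli extension of $\cS$ with base $(K^N,\kappa^N)$ with \emph{any} Bernoulli extension of $\cS$ whose base has Shannon entropy $H(K,\kappa)[\cR:\cS]$, which gives the claim. The one point requiring care is ensuring that the $\theta_i$'s and the induced decomposition are jointly Borel on a common conull subset so that $\Psi$ is globally Borel and measure-preserving; this is furnished by \cite[Lemma 1.1]{Io09}, and the remainder is routine bookkeeping.
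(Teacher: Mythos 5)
Your proof is correct and follows essentially the same route as the paper: both use \cite[Lemma 1.1]{Io09} to decompose each $\cR$-class into $\cS$-cosets via $\{\theta_i\}_{i=1}^N$, define the isomorphism $(x,\omega)\mapsto(x,\omega')$ with $\omega'(z)_i=\omega(\theta_i(z))$, and conclude via the entropy computation and Theorem \ref{thm:isomorphism}. You merely fill in the verification the paper leaves as ``straightforward.''
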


\begin{proof}
Let $N=[\mathcal R:\mathcal S]$.
Since $\mathcal S$ is ergodic, we can find  $\{\theta_i\}_{i=1}^N\in [\mathcal R]$ such that for almost every $x\in X$ we have $\theta_i([x]_{\mathcal S})\cap\theta_j([x]_{\mathcal S}) = \emptyset$, for all $i\neq j$, and $[x]_{\mathcal R}=\cup_{i=1}^N\theta_i([x]_{\mathcal S})$ (see \cite[Lemma 1.1]{Io09}). 

Let $(L,\lambda)=(K,\kappa)^{N}$. We denote by $(Y_L,\nu_{\lambda})$ the underlying space of the Bernoulli extension $\mathcal S_L$ the Bernoulli extension of $\mathcal S$ with base space $(L,\lambda)$. Specifically, $Y_L=\{(x,\omega')|x\in X,\omega'\in L^{[x]_{\mathcal S}}\}$.

Define the isomorphism $\Phi:X_K \to Y_L$ by letting $\Phi(x,\omega) = (x, \omega')$, where $\omega': [x]_\cS \to L$ is defined by $\omega'(y)_n = \omega(\theta_n(y))$. We will prove that  $\Phi$ is an isomorphism between the extensions $\tcS \to \cS$ and $\cS_L \to \cS$. Since $H(L,\lambda)=N\;H(K,\kappa)=[\mathcal R:\mathcal S]\;H(K,\kappa)$, the conclusion follows.

For $(x,\omega') \in Y_L$, define $\omega'':[x]_\cR \to K$ by $\omega''(\theta^n(y)) =\omega'(y)_n$ for $y\in [x]_\cS$. This is well-defined because $\{\theta_n(y):~y\in [x]_\cS\}$ partitions $[x]_\cR$. Define $\Psi:Y_L \to X_K$ by $\Psi(x,\omega')=\omega''$. Then $\Psi$ is the inverse of $\Phi$, so $\Phi$ is invertible.

Fix $x\in X$ and let $\omega$ be a random variable with law $\kappa^{[x]_\cR}$. Since $\{\omega(y)\}_{y \in [x]_\cR}$ are i.i.d. random variables and $\{\theta_n(y):~y\in [x]_\cS\}$ partitions $[x]_\cR$, it follows that $\{\omega'(y)\}_{y \in [x]_\cS}$ are also i.i.d. random variables. So $\Phi$ maps the fiber measure $\kappa^{[x]_\cR}$ to the fiber measure $\lambda^{[x]_\cS}$ and therefore, it maps $\mu_\kappa$ to $\mu_\lambda$.

\end{proof}

\subsection{Compressions of Bernoulli extensions}

\begin{thm}
\label{thm:compression}
Let $\cR$ be a countable ergodic pmp equivalence relation on a probability space $(X,\mu)$. Let $(K,\kappa)$ be a probability space and $\cR_K \to \cR$ the corresponding Bernoulli extension of $\cR$. Let $Y \subset X$ be a non-null Borel set and let $\tY \subset X_K$ be the corresponding lift.

 Then the extension $\cR_K \resto \tY \to \cR \resto Y$ is isomorphic to the Bernoulli extension of $\cR \resto Y$ with base space entropy equal to $H(K,\kappa)/\mu(Y)$. 
\end{thm}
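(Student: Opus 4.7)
The strategy parallels the proofs of Theorems~\ref{thm:isomorphism} and~\ref{thm:subeq}, replacing the finite-index subequivalence relation used in Theorem~\ref{thm:subeq} by a Kakutani tower decomposition of $X$ over $Y$. By ergodicity of $\cR$, fix an ergodic $\theta \in [\cR]$ and let $r:Y \to \N_{\ge 1}$ be the first return time $r(y)=\min\{n \ge 1 : \theta^n y \in Y\}$. Kac's formula gives $\int_Y r\,d\mu_Y = 1/\mu(Y)$, and the Kakutani skyscraper decomposition yields $X = \bigsqcup_{k \ge 0} \theta^k(\{r > k\})$. Consequently, every $\cR$-class $[y]_{\cR}$ splits as $\bigsqcup_{y' \in [y]_{\cR \resto Y}} \{\theta^k y' : 0 \le k < r(y')\}$.

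Using this tower structure, define a Borel bijection
$\Phi: \tY \to \{(y,\xi) : y \in Y,\ \xi(y') \in K^{r(y')}\text{ for } y' \in [y]_{\cR \resto Y}\}$
by $\Phi(y,\omega) = (y,\xi)$ with $\xi(y')_k = \omega(\theta^k y')$. Then $\Phi$ pushes $\mu(Y)^{-1}\mu_{\kappa}|_{\tY}$ forward to $\mu_Y \otimes \prod_{y'} \kappa^{r(y')}$, the fibers being independent across $y' \in [y]_{\cR \resto Y}$, and $\Phi$ intertwines $\cR_K \resto \tY$ with the natural equivalence relation on the target. Thus the extension $\cR_K \resto \tY \to \cR \resto Y$ is realized as a ``variable-alphabet Bernoulli extension'' over $\cR \resto Y$ whose fiber at the coordinate $y'$ is $(K^{r(y')},\kappa^{r(y')})$.

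To convert this into a constant-alphabet Bernoulli extension of $\cR \resto Y$ with base entropy $t := H(K,\kappa)/\mu(Y)$, fix an ergodic $\psi \in [\cR \resto Y]$ and work along $\psi$-orbits, following the template of Theorem~\ref{thm:isomorphism}. Along each $\psi$-orbit, the pushforward extension is a $\Z$-shift on $\prod_{n \in \Z} K^{r(\psi^n y)}$ that is relatively Bernoulli over the base $(Y,\mu_Y,\psi)$, with relative entropy $\int_Y r \cdot H(K,\kappa)\,d\mu_Y = t$ by Abramov's formula. The relative form of Ornstein's Isomorphism Theorem then produces a $\psi$-equivariant isomorphism onto the ordinary $\Z$-Bernoulli shift on $L^\Z$ for any standard probability space $(L,\lambda)$ with $H(L,\lambda)=t$; as in the proof of Theorem~\ref{thm:isomorphism}, an orbit-by-orbit transport of this isomorphism assembles into the desired $\cR \resto Y$-equivariant isomorphism of extensions.

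The main technical obstacle is the appeal to the relative form of Ornstein's theorem, since the alphabet $K^{r(y')}$ has unbounded size depending measurably on $y'$. A cleaner route, valid when $N := 1/\mu(Y) \in \N$, avoids relative Ornstein entirely: by ergodicity there exist $\theta_1=\mathrm{id}, \theta_2, \ldots, \theta_N \in [\cR]$ with $X = \bigsqcup_i \theta_i(Y)$, and then $\Phi(y,\omega)=(y,\omega')$ with $\omega'(y')_i=\omega(\theta_i y')$ exhibits $\cR_K \resto \tY$ directly as the Bernoulli extension of $\cR \resto Y$ with base $(K^N,\kappa^N)$, in exact analogy with Theorem~\ref{thm:subeq}. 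The general case can then be reduced to this integer case by combining it with Theorem~\ref{thm:isomorphism} through a suitable approximation.
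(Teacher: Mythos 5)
Your first route is essentially the paper's route, but you stop just short of the device that makes it go through. You correctly reduce the problem, via a tower decomposition over $Y$, to showing that a ``variable-alphabet'' Bernoulli extension of $\cR\resto Y$ (with fiber $K^{r(y')}$ at the coordinate $y'$) is isomorphic to an ordinary Bernoulli extension with the right base entropy, and you correctly flag the obstacle: the relative Ornstein/Thouvenot machinery wants a \emph{fixed} target space. The paper resolves exactly this obstacle by embedding all the variable alphabets into a single fixed complete metric space. Concretely, it sets $\Delta = K_*^\N$ where $K_* = K\cup\{*\}$, identifies $K^S$ (for $S\subset\N$) with sequences in $\Delta$ that equal $*$ off $S$, and thereby trades a variable alphabet for a variable \emph{distribution} $\phi(x)=\kappa^{S(x)}\in\Prob(\Delta)$ on a fixed space. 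This is what the paper calls an inhomogeneous Bernoulli extension, and Lemmas \ref{lem:inhomog} and \ref{lem:inhomog2} (built on Thouvenot's Relative Isomorphism Theorem, with the alphabet generalized to an arbitrary complete metric space) do the rest. The only other cosmetic difference is the decomposition: you use the Kakutani return-time skyscraper of a single ergodic $\theta$, while the paper uses a countable family $\theta_1,\theta_2,\ldots\in[[\cR]]$ with $\mathrm{dom}(\theta_i)\subset Y$, $\theta_i(Y)$ disjoint, and $\bigsqcup_i\theta_i(Y)$ co-null; both yield $\sum_i\mu(\mathrm{dom}(\theta_i))=1$ and produce the same entropy count $\int_Y H(\phi(x))\,d\mu(x)/\mu(Y)=H(K,\kappa)/\mu(Y)$.

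Your fallback plan, however, has a genuine gap. The integer case $1/\mu(Y)\in\N$ is fine and is a clean analogue of Theorem~\ref{thm:subeq}. But ``the general case can then be reduced to this integer case by combining it with Theorem~\ref{thm:isomorphism} through a suitable approximation'' is not an argument. If $\mu(Y)$ is irrational there is no smaller cross-section $Y'\subset Y$ with both $1/\mu(Y')$ and $1/\mu_Y(Y')$ integers, so you cannot sandwich $Y$ between two instances of the integer case. Even for rational $\mu(Y)=p/q$, taking $Y'\subset Y$ with $\mu(Y')=1/q$ only tells you that the \emph{further} restriction $\cR_K\resto\tY'\to\cR\resto Y'$ is Bernoulli of the right entropy; this does not by itself let you conclude that the intermediate extension $\cR_K\resto\tY\to\cR\resto Y$ is Bernoulli, which is a ``deduction'' step that would itself require relative-Bernoulli theory. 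So the approximation route does not avoid the relative Ornstein/Thouvenot input; it merely relocates it. The fix is to carry out your first approach to completion using the fixed-space encoding described above.
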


To prove this we first need to study the classification of inhomogeneous Bernoulli shifts.

Let $T \in \Aut(X,\mu)$ be an ergodic automorphism of a probability space. Let $\Omega$ be a complete metric space and $\Prob(\Omega)$ the set of probability measures on $\Omega$ endowed with the weak* topology. Suppose that $\phi:X \to \Prob(\Omega)$ is a Borel map. For $x\in X$, let $\kappa_x$ be the probability measure on $\Omega^\Z$ obtained as the direct product of the measures $\phi(T^nx)$ ($n\in \Z$).



Define the measure $\tmu$ on $X \times \Omega^\Z$ by 
$$d\tmu(x,\omega) = d\kappa_x(\omega)~d\mu(x).$$
We let $\sigma$ denote the shift map from $\Omega^\Z$ to itself given by $\sigma(\omega)_n = \omega_{n+1}$. Then $\tmu$ is $T\times \sigma$-invariant. The automorphism $T\times \sigma$ is called the {\it inhomogeneous Bernoulli shift over $T$ with data $\phi$}.

\begin{lem}\label{lem:inhomog}
The inhomogeneous Bernoulli shift defined above is measurably conjugate to a direct product $T \times U$, where $U$ is a Bernoulli shift with  $h(U) = \int H(\phi(x))~d\mu(x).$ Moreover, the conjugacy can be chosen to be the identity on the $X$ coordinate. 
\end{lem}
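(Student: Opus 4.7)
The plan is to identify the inhomogeneous Bernoulli shift as a \emph{relatively Bernoulli} extension of $(X,\mu,T)$ via the projection $\pi\colon X\times\Delta^\Z\to X$, and then invoke Thouvenot's relative version of Ornstein's isomorphism theorem. Recall that an ergodic extension $\pi\colon(Y,\rho,S)\to(X,\mu,T)$ is relatively Bernoulli over $(X,\mu,T)$ if there is a measurable generating partition $\xi$ of $Y$ whose iterates $\{S^n\xi\}_{n\in\Z}$ are mutually independent conditionally on $\pi^{-1}(\mathcal{B}_X)$, where $\mathcal{B}_X$ denotes the Borel $\sigma$-algebra of $X$. Thouvenot's theorem asserts that two relatively Bernoulli extensions of a fixed ergodic base are conjugate via an isomorphism intertwining the projections to $X$ (equivalently, fixing the $X$-coordinate) if and only if their relative entropies $h(S\mid T)$ agree. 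Since the conclusion requires exactly such a conjugacy, this is the natural tool.

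The first, essentially bookkeeping, step is to check that $\tmu$ is indeed $T\times\sigma$-invariant, which reduces to $\sigma_{*}\kappa_x=\kappa_{Tx}$ and follows from the product formula $\kappa_x=\prod_{n}\phi(T^n x)$ together with $T$-equivariance. Next, I would verify that $(X\times\Delta^\Z,\tmu,T\times\sigma)\to(X,\mu,T)$ is relatively Bernoulli: the time-zero coordinate map $\omega\mapsto\omega_0$ (or, when $\Delta$ is uncountable, a refining sequence of finite partitions of $\Delta$) generates the fiber $\sigma$-algebra under the shift, and conditional independence of its iterates given $\pi^{-1}(\mathcal{B}_X)$ is immediate from $d\kappa_x=\prod_{n}d\phi(T^n x)$. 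The relative Kolmogorov--Sinai formula then yields
$$h(T\times\sigma\mid T)=H(\omega_0\mid\pi^{-1}(\mathcal{B}_X))=\int H(\phi(x))\,d\mu(x),$$
where the last equality uses the conditional law $\omega_0\mid x\sim\phi(x)$.

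To conclude, let $U$ be the standard Bernoulli shift on $(\Delta',\nu)^\Z$ for any probability space $(\Delta',\nu)$ with $H(\nu)=\int H(\phi(x))\,d\mu(x)$ (with $H(\nu)=+\infty$ permitted). The product system $T\times U$ is manifestly relatively Bernoulli over $(X,\mu,T)$ with relative entropy $h(U)=H(\nu)$, matching the value just computed. Thouvenot's theorem then produces an isomorphism of $T\times\sigma$ with $T\times U$ which is the identity on the $X$-coordinate, giving the lemma.

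The main technical obstacle is the correct setup of the relative Ornstein framework rather than any novel step: verifying the relative Bernoulli property with an uncountable alphabet $\Delta$ requires a refining sequence of finite partitions, and the case $\int H(\phi(x))\,d\mu(x)=+\infty$ requires the version of Thouvenot's theorem allowing infinite relative entropy. A self-contained alternative avoiding Thouvenot is to first treat $\phi$ with finite image by an inhomogeneous cutting-and-stacking argument modelled on Ornstein's original proof, and then pass to general $\phi$ by approximating $\phi$ by finitely-valued maps and extracting a martingale limit of the resulting partial isomorphisms; this is much longer but uses only Ornstein's original theorem cited in the paper.
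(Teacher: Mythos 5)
Your proof is correct and follows essentially the same route as the paper: both reduce the lemma to Thouvenot's relative Ornstein theory \cite{Th75}, the key observation being that the coordinate partition of the fiber $\Delta^{\mathbb{Z}}$ has $S$-iterates that are conditionally i.i.d.\ given the base $\sigma$-algebra, and both handle the uncountable-alphabet and infinite-entropy issues in the same way (refining finite partitions, respectively inverse limits of Bernoulli shifts \cite{Or74}). The only cosmetic difference is that you invoke the relative isomorphism theorem (two relatively Bernoulli extensions with equal relative entropy are conjugate over the base), whereas the paper invokes the structural theorem (conditionally VWB implies the system splits as base $\times$ Bernoulli), verifying VWB trivially from the same conditional independence; these are two facets of the same result of Thouvenot, and either yields the conjugacy fixing the $X$-coordinate.
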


\begin{proof}
This is a straightforward consequence of Thouvenot's Relative Isomorphism Theorem \cite{Th75}. We provide some details here, guided by the formulation of Thouvenot's Theorem presented in \cite{Ki84}. 

Let $(B,\rho)$ be a complete separable metric space. If $\nu_1,\nu_2$ are probability measures on $B^m$ (for some integer $m>0$), then we define the $\bar{d}$-distance between them by:
$$\bar{d}(\nu_1,\nu_2)= \inf_J \int m^{-1}\sum_{i=1}^m \rho(x_i,y_i)~dJ(x,y)$$
where the infimum is over all probability measures $J$ on $B^m \times B^m$ with marginals $\nu_1$ and $\nu_2$.

Let $(Y,\cC,\nu)$ be a standard probability space and $S:Y \to Y$ a measure-preserving automorphism. Also let $\cF \subset \cC$ be an $S$-invariant sub-$\sigma$-algebra and $\psi:Y \to B$ a measurable map. Let $\psi_1^m:Y \to B^m$ be the map 
$$\psi_1^m(y)=( \psi(Sy),\ldots, \psi(S^my)).$$
Also let $\psi^0_{-\infty}:Y \to B^\N$ be the map
$$\psi^0_{-\infty}(y) = (\psi(y),\psi(S^{-1}y), \psi(S^{-2}y),\ldots).$$

Then $(S,\psi,\nu)$ is {\bf $\cF$-conditionally very weak Bernoulli (VWB)} if whenever $y\in Y$ is random with $\textrm{Law}(y)=\nu$ then
$$\lim_{m\to\infty} \E [\bar{d}( \textrm{Law}( \psi^m_1(y)|  \cF),  \textrm{Law}( \psi^m_1(y)| \cF, \psi^0_{-\infty}) ] = 0.$$
A word about this expression is in order. $\textrm{Law}(\psi^m_1(y))$ is just the pushforward measure $(\psi^m_1)_*\mu$ (since $y$ has law $\mu$).  $\textrm{Law}(\psi^m_1(y)| \cF)$ is the distribution of $\psi^m_1(y)$ conditioned on $\cF$. In other words, it is the conditional expectation of $(\psi^m_1)_*\mu$ relative to $\cF$. Similarly, $\textrm{Law}( \psi^m_1(y)| \cF, \psi^0_{-\infty})$ is the conditional expectation of $(\psi^m_1)_*\mu$ relative to $\cF$ and the $\sigma$-algebra generated by $\psi^0_{-\infty}$. The expected value in the expression above is over $y$. 

In the special case in which $B$ is a finite set, Thouvenot proved that if $(S,\psi,\nu)$ is $\cF$-conditionally VWB then $(S,\psi,\nu)$ is $\cF$-relatively Bernoulli. The latter means there is an $S$-invariant sub-$\sigma$-algebra $\cG$ such that:
\begin{itemize}
\item $\cF$ is independent of $\cG$ (so for any $A \in \cF, A' \in \cG$, $\mu(A \cap A') = \mu(A)\mu(A')$),
\item the factor corresponding to $\cG$ is isomorphic to a Bernoulli shift. The entropy rate of this factor is necessarily equal to $h(S, \cG) = h(S, \cG \vee \cF|\cF)$,
\item the $\sigma$-algebras $\cF$ and $\cG$ generate the Borel $\sigma$-algebra of $Y$ up to sets of measure zero.
\end{itemize}
It is straightforward to generalize the proof to the case in which $B$ is an arbitrary complete metric space. Alternatively, one can use the fact that inverse limits of Bernoulli shifts are Bernoulli \cite{Or74}. 

Now let $T\in \Aut(X,\mu)$, $\phi:X \to \Prob(\Omega)$ and $\tilde{\mu}$ be as before this lemma. We set $Y=X\times \Omega^\Z$, $S = T\times \sigma$, $\nu = \tilde{\mu}$ and let $\psi: X \times \Omega^\Z \to \Omega$ be the map $\psi(x, y)=y_0$ (where $y=(y_i)_{i\in \Z}$). Also let $\cF$ be the $\sigma$-algebra generated by projection to the $X$-coordinate. It is straightforward to check that $(S,\psi,\nu)$ is $\cF$-conditionally VWB. Indeed, in this case, 
$$\textrm{Law}( \psi^m_1(y)|  \cF) =   \textrm{Law}( \psi^m_1(y)| \cF, \psi^0_{-\infty}).$$
To finish the lemma, it suffices to observe
$$h(S|\cF) = \int H(\phi(x))~d\mu(x).$$
Indeed, if $\cP_0$ is any finite partition on $\Omega$ we may let $\cP$ be the partition on $X\times \Omega^\Z$ given by: $(x,y)$ and $(x',y')$ are in the same parts of $\cP$ if and only if $y_0$ and $y'_0$ are in the same parts of $\cP_0$. Then the translates $\{S^n \cP\}_{n\in \Z}$ are independent relative to $\cF$. Therefore
$$h(S, \cP|\cF) = \int H_{\phi(x)}(\cP_0)~d\mu(x).$$
The entropy rate $h(S|\cF)$ is the supremum of $h(S,\cP|\cF)$ over all such $\cP$. This is because the translates $\{\psi \circ S^n\}_{n\in \Z}$ together with $\cF$ generate the Borel $\sigma$-algebra on $Y$. By the Monotone Convergence Theorem,
$$\int H(\phi(x))~d\mu(x) = \sup_{\cP_0} \int H_{\phi(x)}(\cP_0)~d\mu(x).$$

\end{proof}


Next, we extend the previous result to equivalence relations. Let $\cR$ be a countable ergodic pmp equivalence relation on $(X,\mu)$. Let $\phi:X \to \Prob(\Omega)$ be a Borel map. For $x\in X$, let $\kappa_x$ denote the probability measure on $\Omega^{[x]_\cR}$ given by: $\kappa_x$ is the direct product of $\phi(y)$ over all $y\in [x]_\cR$. 

Let $X_\phi$ be the set of all pairs $(x,\omega)$ with $x\in X$ and $\omega\in\Omega^{[x]_\cR}$. We endow $X_\phi$ with the smallest $\sigma$-algebra of sets which makes the maps $(x,\omega)\mapsto x$ and $(x,\omega)\mapsto\omega(\theta(x))$ measurable, for every $\theta\in [\mathcal R]$. Also we endow $X_\phi$ with the probability measure $\mu_\phi$ defined by
 $$\mu_\phi(x,\omega) = d\kappa_x(\omega)~d\mu(x).$$
 Let $\cR_\phi$ be the equivalence relation on $\cR$ given by $(x,\omega)\cR_\phi (x',\omega')$ if and only if $x \cR x'$ and $\omega=\omega'$.   We call $\cR_\phi$ the {\it inhomogeneous Bernoulli extension over $\cR$ with data $\phi$}. 

\begin{lem}\label{lem:inhomog2}
The inhomogeneous Bernoulli extension $\cR_\phi \to \cR$ is isomorphic to the Bernoulli extension of $\cR$ with base space entropy equal to $\int H(\phi(x))~d\mu(x)$. 
\end{lem}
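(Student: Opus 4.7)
The plan is to imitate the proof of Theorem \ref{thm:isomorphism}, substituting Ornstein's isomorphism theorem by its inhomogeneous, relative version, Lemma \ref{lem:inhomog}. Fix an ergodic element $\theta \in [\cR]$. For $(x,\omega) \in X_\phi$, define $\omega^x \in \Delta^\Z$ by $\omega^x(n) = \omega(\theta^n x)$. Under $\mu_\phi$, the pair $(x, \omega^x)$ has exactly the distribution of the inhomogeneous Bernoulli shift over $(\theta, \mu)$ with data $\phi$ in the sense of Lemma \ref{lem:inhomog}, because the coordinates $\omega(\theta^n x)$ are independent with $\omega(\theta^n x)\sim \phi(\theta^n x)$. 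Lemma \ref{lem:inhomog} then produces a measurable conjugacy
$$\Phi : (X \times \Delta^\Z, \tmu) \to (X \times K^\Z, \mu \times \kappa^\Z)$$
of the form $\Phi(x, \eta) = (x, \Phi_x(\eta))$, intertwining $\theta \times \sigma$ with $\theta \times \sigma_K$, where $(K, \kappa)$ is a standard probability space of Shannon entropy $\int H(\phi(x))~d\mu(x)$ and $\sigma_K$ is the shift on $K^\Z$.

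Using $\Phi$, I then define the candidate isomorphism $\Psi : X_\phi \to X_\kappa$ by $\Psi(x, \omega) := (x, \omega')$, where
$$\omega'(y) := \Phi_y(\omega^y)_0 \qquad (y \in [x]_\cR).$$
Because the right-hand side depends only on $\omega$ viewed as a function on $[x]_\cR$ and on $y$, and not on the chosen base point $x$, the map $\Psi$ is the identity on the $X$-coordinate and automatically intertwines $\cR_\phi$ with $\cR_\kappa$; in particular, it is a candidate isomorphism of extensions over $\cR$.

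For the measure-preservation, decompose $[x]_\cR$ into $\theta$-orbits. If $y_1, y_2 \in [x]_\cR$ lie in distinct $\theta$-orbits, then $\omega^{y_1}$ and $\omega^{y_2}$ depend on disjoint coordinates of the product measure $\kappa_x$, so $\omega'(y_1)$ and $\omega'(y_2)$ are independent. If instead $y_2 = \theta^k y_1$, then $\omega^{y_2} = \sigma^k \omega^{y_1}$, and the $(\theta \times \sigma)$-equivariance of $\Phi$ gives
$$\Phi_{y_2}(\omega^{y_2}) = \sigma_K^k \Phi_{y_1}(\omega^{y_1}),$$
so that $\omega'(\theta^k y_1) = \Phi_{y_1}(\omega^{y_1})_k$. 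Thus, read through any base point, the restriction of $\omega'$ to a single $\theta$-orbit equals $\Phi_{y_1}(\omega^{y_1})$, which has law $\kappa^\Z$ by Lemma \ref{lem:inhomog}. Combining the two observations yields $\omega' \sim \kappa^{[x]_\cR}$ conditional on $x$, so $\Psi_* \mu_\phi = \mu_\kappa$.

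The inverse $\Psi^{-1}$ is constructed analogously from $\Phi^{-1}$, and the same orbit-by-orbit equivariance computation shows $\Psi^{-1} \circ \Psi = \mathrm{id}$ and $\Psi \circ \Psi^{-1} = \mathrm{id}$ almost everywhere, so $\Psi$ is an isomorphism of extensions over $\cR$, as required. The main technical point that will need care is the bookkeeping in the third paragraph: one must verify that the canonical, basepoint-free formula $\omega'(y) = \Phi_y(\omega^y)_0$ delivers the Bernoulli measure on each $\theta$-orbit independently of how that orbit is parametrized, and it is precisely the product structure of $\kappa_x$ together with the $(\theta\times\sigma)$-equivariance of $\Phi$ that do the work.
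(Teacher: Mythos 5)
Your proposal is correct and follows essentially the same route as the paper: fix an ergodic $\theta\in[\cR]$, feed $\omega^x$ into the conjugacy $\Phi$ supplied by Lemma \ref{lem:inhomog}, and set $\Psi(x,\omega)=(x,\omega')$ with $\omega'(y)=\Phi(y,\omega^y)_0$. The paper leaves the verification that $\Psi$ is a measure isomorphism of extensions ``as an exercise,'' and your orbit-by-orbit decomposition (independence across distinct $\theta$-orbits from the product structure of $\kappa_x$, plus the $(\theta\times\sigma)$-equivariance of $\Phi$ along a single orbit giving $\omega'(\theta^k y)=\Phi_y(\omega^y)_k$) is exactly the right bookkeeping for that exercise.
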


\begin{proof}
Let $(K,\kappa)$ be a probability space with entropy equal to $\int H(\phi(x))~d\mu(x)$. Let $\theta\in [\cR]$ be an ergodic element (see \cite[Theorem 3.5]{Ke10}). By Lemma \ref{lem:inhomog}, the inhomogeneous Bernoulli shift over $\theta$ with data $\phi$ is isomorphic to $\theta \times S$, where $S$ is the Bernoulli shift with base space $(K,\kappa)$. Let $\Phi: X \times \Omega^\Z \to X \times K^\Z$ be such an isomorphism.

Given $x\in X$ and $\omega: [x]_\cR \to \Omega$, let $\omega^x \in \Omega^\Z$ be the map 
$$\omega^x(n) = \omega(\theta^n x).$$
Also, define
$\omega':[x]_\cR \to K$ by
$$\omega'(y) = \Phi(y,\omega^y)_0.$$
That is, $\omega'$ is the time 0 coordinate of $\Phi(y,\omega^y)$. Finally, define $\Psi: X_\phi \to X_K$ by 
$$\Psi(x,\omega)= (x,\omega').$$ 
The proof that $\Psi$ is the desired isomorphism is similar to the proof of Theorem \ref{thm:isomorphism}.
\end{proof}

\begin{proof}[Proof of Theorem \ref{thm:compression}]

Without loss of generality, we may assume $K$ is a compact metrizable space. 
Let $*$ be an element not contained in $K$. Let $K_*=K\cup\{*\}$ be the disjoint union and $\Omega=K_*^\N$. For $S\subset\mathbb N$, we identify the product space $K^S$ with the set of sequences $\alpha=(\alpha_1,\alpha_2,...)\in\Omega$ such that $\alpha_i\in K$, if $i\in S$, and $\alpha_i=*$, if $i\notin S$. We also view the product measure $\kappa^S$ as a measure on $\Omega$ be letting $\kappa^S(\Omega\setminus K^S)=0$.

Since $\mathcal R$ is ergodic, we can find $\theta_1,\theta_2,...\in [[\mathcal R]]$ such that dom$(\theta_i)\subset Y$, for all $i$, $\theta_i(Y)\cap\theta_j(Y)=\emptyset$, for all $i\not=j$, and $\cup_i\theta_i(Y)$ is co-null in $X$. Then $\sum_i\mu(\text{dom}(\theta_i))=1$. 

For $x\in Y$, let $S(x)$ be the set of all $i \in \N$ with $x\in\text{dom}(\theta_i)$. 
Define $\phi: Y \to \Prob(\Omega)$ by $\phi(x) =\kappa^{S(x)}$. 
Let $(X_K,\mu_\kappa)$ be the underlying space of the Bernoulli extension $\mathcal R_K$. We denote by $\tY$ the lift of $Y$ to $X_K$. 
Let $(X_\phi,\mu_\phi)$ be the underlying space of the Bernoulli extension of $\cR \resto Y$ by $\phi$. 

Define the isomorphism $\Phi:\tilde Y \to X_\phi$ by $\Phi(x,\omega) = (x, \omega')$ where $\omega': [x]_{\cR \resto Y} \to \Omega=K_*^{\mathbb N}$ is defined by $\omega'(y)_n=*$,  if $n \notin S(y)$, and $\omega'(y)_n = \omega(\theta_n(y))$, otherwise. Below, we prove that $\Phi$ is an isomorphism between the extension $\cR_K \resto\tY \to \cR \resto Y$ and the extension $(\cR \resto Y)_\phi \to \cR \resto Y$.   Lemma \ref{lem:inhomog2} implies that the extension $(\cR \resto Y)_\phi \to \cR \resto Y$ is isomorphic to the Bernoulli extension of $\cR\resto Y$ with base space entropy equal to
$$\mu(Y)^{-1}\int_Y H(\phi(x))~d\mu(x) = \mu(Y)^{-1}H(K,\kappa)\sum_{i=1}^\infty \mu(\dom(\theta_i)) = H(K,\kappa)/\mu(Y),$$
where the $\mu(Y)^{-1}$ terms appear because we have to renormalize the measure on $Y$. So this finishes the theorem.

To prove that $\Phi$ is invertible, for $(x,\omega')\in X_\phi$, define $\omega'': [x]_\cR \to K$ by
$$\omega''( \theta_n(y)) = \omega'(y)_n \textrm{ for } y \in \dom(\theta_n) \cap [x]_\cR.$$
Because $\{ \theta_n(y):~ y \in \dom(\theta_n) \cap [x]_\cR, n \in \N\}$ partitions $[x]_\cR$, this is well-defined. Define $\Psi:X_\phi \to \tY$ by
$$\Psi(x,\omega')=(x,\omega'').$$
Then $\Psi$ is the inverse of $\Phi$. 

Fix $x \in Y$. Because $\{ \theta_n(y):~ y \in \dom(\theta_n) \cap [x]_\cR, n \in \N\}$ partitions $[x]_\cR$, $\Phi$ maps the fiber measure $\kappa^{[x]_{\cR}}$ to the fiber measure $\prod_{z} \kappa^{S(z)}$ where the production is over $z \in [x]_\cR \cap \bigcup_i \dom(\theta_i)$. Therefore, $\Phi_*(\mu_\kappa\resto Y)  = \mu_\phi$.
\end{proof}


\section{Bernoulli percolation on graphed equivalence relations}

This section is devoted to the first part of the proof of main assertion of Theorem \ref{main}. 
We start by recalling several concepts and results regarding Bernoulli percolation on graphs. 
 
 \subsection{Bernoulli percolation on graphs} Let $\mathcal G=(V,E)$ be an infinite (multi-)graph with vertex set $V$ and symmetric set of edges $E$. That is, we allow multiple edges between two given vertices. A connected component of $\mathcal G$ is called a {\it cluster}. We identify points in the standard Borel space $\{0,1\}^E$ with subsets of the edge set $E$. This allows us to view $\{0,1\}^E$ as the Borel space of all subgraphs of $\mathcal G$ with the same set of vertices $V$. 
 
  A {\it simple cycle} in $\mathcal G$ is a cycle that does not use any vertex or edge more than once. A {\it simple bi-infinite path} in $\mathcal G$ is a bi-infinite path that does not use any vertex or edge more than once.
 
An infinite set of vertices $V_0\subset V$ is {\it end convergent} if for every finite $K\subset V$, there is a connected component of $\mathcal G\setminus K$ that contains all but finitely many vertices of $V_0$. Two end-convergent sets $V_0,V_1$ are {\it equivalent} if $V_0\cup V_1$ is end-convergent. An {\it end} of $\mathcal G$ is an equivalence class of end-convergent sets.

The {\it Bernoulli}($p$) {\it bond percolation} on $\mathcal G$ is the process of independently keeping edges with probability $p$ and deleting them with probability $1-p$. Concretely, we endow $\{0,1\}^E$ with the probability measure $\lambda_p^E$, where $\lambda_p$ is the probability measure on $\{0,1\}$ with weights $1-p$ and $p$.

Since the event that $\omega \in \{0,1\}^E$ has an infinite cluster is a tail event, Kolmogorov's $0$-$1$ law implies that the probability $\alpha(p):=\lambda_p^E(\{\omega\in \{0,1\}^E|\;\omega \;\text{has an infinite cluster}\})$ is equal to $0$ or $1$. 

The {\it critical value} $p_c(\mathcal G)\in [0,1]$ is defined as $p_c(\mathcal G)=\sup\{p\in [0,1]|\;\alpha(p)=0\}$. It is easy to see that if $p\geq p_c(\mathcal G)$, then \begin{equation}\label{pc}p_c(\omega)={p_c(\mathcal G)}/{p},\;\;\; \text{for}\;\;\; \lambda_p^E\text{-almost every}\;\;\; \omega\in \{0,1\}^E.\end{equation}

One also defines $p_u(\mathcal G)$ as the infimum of the set of $p\in [0,1]$ such that $\omega$ has a unique infinite cluster, for $\lambda_p^E$-almost every $\omega\in \{0,1\}^E$.

The following result due to I. Benjamini and O. Schramm  (see \cite[Theorem 4]{BS96}) provides an upper bound for $p_c$.

\begin{theorem}\label{BS96} If $\mathcal G=(V,E)$ is a graph then $p_c(\mathcal G)\leq\frac{1}{\iota(\mathcal G)+1}$. 
\end{theorem}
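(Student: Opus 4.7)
The plan is to prove the contrapositive: if $p > \frac{1}{\iota(\mathcal G)+1}$, then Bernoulli$(p)$ bond percolation on $\mathcal G$ almost surely produces an infinite cluster. By Kolmogorov's 0-1 law it suffices to show this occurs with positive probability, so I would fix a vertex $v \in V$ and aim to prove $\lambda_p^E(|C(v)| = \infty) > 0$, where $C(v)$ is the open cluster of $v$.

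The main tool is a breadth-first exploration of $C(v)$ with respect to $\lambda_p^E$. Set $S_0 = \{v\}$ and, given $S_{t-1}$ and the collection $F_{t-1}$ of unexplored edges in $\partial_E S_{t-1}$, pick some $e_t \in F_{t-1}$ (in a canonical way) and reveal its state. If $e_t$ is open, adjoin its endpoint outside $S_{t-1}$ to form $S_t$; otherwise set $S_t = S_{t-1}$. Writing $o_t$ for the number of open edges revealed by step $t$, every such edge adds exactly one new vertex (its other endpoint was by construction outside $S_{t-1}$), so $|S_t| = 1 + o_t$. Moreover, every edge in $\partial_E S_t$ is either still unexplored (hence in $F_t$) or was revealed as closed---and there are at most $t - o_t$ of the latter---so the isoperimetric inequality gives
\[
|F_t| \;\geq\; \iota(\mathcal G)\,|S_t| \;-\; (t - o_t) \;=\; \iota(\mathcal G) \;+\; (\iota(\mathcal G)+1)\,o_t \;-\; t.
\]

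Because Bernoulli$(p)$ edge states are independent and the exploration queries distinct edges, $(o_t)$ is the partial-sum process of i.i.d.\ Bernoulli$(p)$'s, and therefore $M_t := (\iota(\mathcal G)+1)o_t - t$ is a random walk whose increments take the value $\iota(\mathcal G)$ with probability $p$ and $-1$ with probability $1-p$. The assumption $p > \frac{1}{\iota(\mathcal G)+1}$ makes the drift $(\iota(\mathcal G)+1)p - 1$ strictly positive, so the strong law of large numbers gives $M_t \to +\infty$ almost surely; in particular $\inf_{t \geq 0} M_t$ is a.s.\ finite, and a standard ladder-variable argument for random walks with positive drift starting at $0$ shows $\lambda_p^E(\inf_t M_t > -\iota(\mathcal G)) > 0$. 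On this event $|F_t| > 0$ for every $t$, the exploration never terminates, and $|C(v)| = \infty$.

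The step requiring the most care is the bookkeeping behind $|S_t| = 1 + o_t$ and the lower bound on $|F_t|$ when $\mathcal G$ is a multigraph: once a new vertex $w$ joins $S_t$ via an open edge, any parallel edges between $w$ and $S_{t-1}$ that had not yet been explored must be silently purged from the frontier. Such losses only decrease $|F_t|$ relative to the naive count, so the stated lower bound on $|F_t|$ still holds, and the rest of the argument proceeds unchanged.
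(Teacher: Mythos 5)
The paper itself does not prove this statement: it is quoted verbatim from Benjamini--Schramm \cite[Theorem~4]{BS96} and invoked as a black box (see the displayed inequality in the proof of Lemma~4.4). So there is no ``paper's own proof'' to compare against; what you have supplied is a self-contained argument, and it is correct.

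Your exploration/random-walk argument is essentially the standard one. The bookkeeping is sound: since $e_t\in F_{t-1}\subset\partial_E S_{t-1}$ always has exactly one endpoint outside $S_{t-1}$, each open reveal adds exactly one vertex, giving $|S_t|=1+o_t$; and any previously-revealed \emph{open} edge now has both endpoints in $S_t$, so the only explored edges remaining in $\partial_E S_t$ are closed ones, of which there are at most $t-o_t$. Combining with $|\partial_E S_t|\ge\iota(\mathcal G)|S_t|$ yields $|F_t|\ge\iota(\mathcal G)+M_t$ with $M_t=(\iota(\mathcal G)+1)o_t-t$. The increments of $M_t$ are i.i.d.\ (the exploration never re-queries an edge), with positive mean $p(\iota(\mathcal G)+1)-1$ precisely when $p>\frac{1}{\iota(\mathcal G)+1}$; the ladder-epoch fact you cite, that a mean-positive walk started at $0$ stays nonnegative forever with positive probability, then gives $\lambda_p^E(\inf_t M_t\ge 0)>0$, which is more than enough since $\iota(\mathcal G)>0$ in the non-vacuous case. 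On that event $|F_t|>0$ for all $t$ (one should formally continue the walk with fresh Bernoulli variables after any would-be termination, but that is routine), so the exploration never halts, $|C(v)|=\infty$, and Kolmogorov's $0$--$1$ law upgrades positive probability to probability one.

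One small remark: the closing caveat about ``silently purging'' parallel edges from the frontier addresses a concern that does not actually arise with the bound as you wrote it. You do not track $|F_t|$ incrementally; you bound it from below by $|\partial_E S_t|$ minus the count of closed reveals, and edges that become internal to $S_t$ simply drop out of $\partial_E S_t$ and never enter that estimate. So the multigraph case is already handled without any extra accounting.
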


For any subset $A\subset \{0,1\}^E$ and  edge $e\in E$, we denote by $\Pi_eA\subset\{0,1\}^E$ the set $\{\omega\cup\{e\}|\omega\in A\}$. We also denote by $\Pi_{\neg e}A$ the set $\{\omega\setminus\{e\}|\omega\in A\}$.  

The Bernoulli$(p)$ percolation with $p\in (0,1]$ is {\it insertion tolerant}: if $A\subset\{0,1\}^E$ is a Borel subset with $\lambda_p^E(A)>0$, then $\lambda_p^E(\Pi_eA)>0$, for any edge $e\in E$. If $p\in [0,1)$ then it is {\it deletion tolerant}: if $A\subset\{0,1\}^E$ is a Borel subset with $\lambda_p^E(A)>0$, then $\lambda_p^E(\Pi_{\neg e}A)>0$, for any edge $e\in E$.
Moreover, we have that $\lambda_p^E(\Pi_eA)\geq p\lambda_p^E(A)$ and $\lambda_p^E(\Pi_{\neg e}A)\geq (1-p)\lambda_p^E(A)$.

We end this subsection with two well-known consequences of insertion and deletion tolerance:

\begin{lemma}\label{perco}
Let $\mathcal G=(V,E)$ be a multi-graph and $p\in (0,1)$. Assume that $\omega$ has $N_p$ infinite clusters, for $\lambda_p^E$-almost every $\omega\in \{0,1\}^E$, for some constant $N_p\in\mathbb N\cup\{\infty\}$. 
Then  we have

(1) If $\mathcal G$ is connected, then $N_p\in\{0,1,\infty\}$. 

(2) If $N_p=1$, then the infinite cluster of $\omega$ has one end, for $\lambda_p^E$-almost every $\omega\in \{0,1\}^E$.

\end{lemma}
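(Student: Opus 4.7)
The plan is to follow the classical Newman--Schulman strategy: combine the almost-sure constancy of $N_p$ with the insertion/deletion tolerance recorded just before the lemma, in order to reach a contradiction from any configuration that either has $N_p$ strictly between $1$ and $\infty$, or has a unique infinite cluster with multiple ends.

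For (1), I argue by contradiction. Suppose $N_p = k$ with $2 \leq k < \infty$. Fix a vertex $v_0 \in V$ and let $B_n \subset V$ be its ball of radius $n$ in $\mathcal G$. Each of the $k$ almost-sure infinite clusters has infinitely many vertices, so the increasing events
\[
A_n := \{\omega \in \{0,1\}^E : \omega \text{ has exactly } k \text{ infinite clusters, each meeting } B_n\}
\]
have union of full $\lambda_p^E$-measure, and some $A_n$ has positive measure. Using connectedness of $\mathcal G$, choose a finite edge set $F \subset E$ such that all vertices of $B_n$ lie in one connected component of $(V, F)$. Iterating insertion tolerance over the finitely many edges in $F$ gives $\lambda_p^E(\Pi_F A_n) \geq p^{|F|} \lambda_p^E(A_n) > 0$, where $\Pi_F A_n := \{\omega \cup F : \omega \in A_n\}$. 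On $\Pi_F A_n$ the forced edges in $F$ merge all $k$ infinite clusters into one, and adding only finitely many edges cannot create new infinite clusters, so the number of infinite clusters equals $1$ on a set of positive measure, contradicting $N_p = k$.

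For (2), assume $N_p = 1$ almost surely, but that the unique infinite cluster has at least two ends on a set $A$ with $\lambda_p^E(A) > 0$. Unwinding the paper's definition of ``end'', having $\geq 2$ ends is equivalent to the existence of a finite vertex set $K$ whose removal separates the infinite cluster into at least two infinite components, and hence equivalent to the existence of a finite edge set $F \subset E$ contained in the infinite cluster of $\omega$ such that $\omega \setminus F$ has at least two infinite clusters. Let
\[
A_F := \{\omega : F \subset \omega \text{ and } \omega \setminus F \text{ has at least two infinite clusters}\},
\]
so that $A \subset \bigcup_F A_F$ over finite $F \subset E$, and some $A_F$ has positive $\lambda_p^E$-measure. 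Iterating deletion tolerance over $F$ yields $\lambda_p^E(\Pi_{\neg F} A_F) \geq (1-p)^{|F|} \lambda_p^E(A_F) > 0$, and every $\omega' \in \Pi_{\neg F} A_F$ has the form $\omega \setminus F$ with $\omega \in A_F$, hence has at least two infinite clusters, contradicting $N_p = 1$ almost surely.

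The main obstacle in both parts is not the contradiction itself but the reduction of the abstract ``bad'' event (having a finite $N_p \geq 2$, or having $\geq 2$ ends) to a finite-support cylinder event of positive measure, since insertion/deletion tolerance only acts on finitely many edges. Both reductions are routine countable exhaustions: by the balls $B_n$ in (1) and by the finite edge sets $F$ in (2), justified by measurability of the number-of-clusters and end-structure functionals on $\{0,1\}^E$; once the reduction is in place, tolerance delivers the contradiction immediately.
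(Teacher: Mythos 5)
Your proof is correct and follows the same strategy the paper uses (which it largely delegates to \cite{NS81} and \cite{LP13}): the Newman--Schulman insertion-tolerance merging argument for (1), and deletion tolerance to split a multi-ended unique cluster for (2). You have simply filled in the routine countable-exhaustion reductions to positive-measure cylinder events that the paper leaves implicit.
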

{\it Proof.} Part (1) is a direct consequence of insertion tolerance and is due to Newmann and Schulman (see \cite{NS81}  and the second part of the proof of\cite[Theorem 7.6]{LP13}). For part (2), we reproduce the argument given in the proof of \cite[Theorem 7.33]{LP13}. If $\omega$ has a unique infinite cluster for almost every $\omega\in \{0,1\}^E$, then that  cluster has one end. Otherwise, by removing a finite number of edges and using deletion tolerance, we would get that $\omega$ has at least two infinite clusters with positive probability. \hfill $\square$

\subsection{Infinitely many infinite clusters} Before stating the main result of this section, we need to introduce some notation that we will use throughout this and the next section.

\begin{notation}\label{setting} 
 Let $\mathcal R$ be an ergodic countable pmp equivalence relation on a probability space $(X,\mu)$. Suppose that $\mathcal R$ is generated by finitely many automorphisms $\theta_1,...,\theta_n\in [\mathcal R]$. 

\begin{itemize}

\item For $x\in X$, we define an unoriented connected (multi-)graph $\mathcal G_{x}=([x]_{\mathcal R},E_x)$ whose edge set $E_x$ consists of the pairs $\{y,\theta_i(y)\}$ with $y\in [x]_{\mathcal R}$ and $i\in\{1,...,n\}$ (see Section \ref{isop}). 

\item Fix  $p\in (0,1)$ and endow $\{0,1\}$ with the probability measure $\lambda_p$ with weights $1-p$ and $p$. 
 
\item Let $\tilde X$ be the set of pairs $(x,\omega)$ with $x\in X$ and $\omega\in \{0,1\}^{E_x}$, and endow $\tilde X$ with the probability measure $\tilde\mu$ given by $d\tilde\mu(x,\omega)=d\lambda_p^{E_x}(\omega)d\mu(x)$. 

\item Let $\tilde{\mathcal R}$ be the equivalence relation on $\tilde X$ given by $(x,\omega)\tilde{\mathcal R}(y,\xi)$ iff  $x\mathcal Ry$ and $\omega=\xi$.   
\end{itemize}
\end{notation}

Let $u:[\mathcal R]\rightarrow\mathcal U(L^2(\mathcal R,m))$ be the unitary representation defined in section \ref{equiv}.
Consider the self-adjoint operator $T=\sum_{i=1}^n(u(\theta_i)+u(\theta_i^{-1}))$ and note that $\|T\|\leq 2n$. 
The main goal of this section is to show that if $\|T\|\leq n$, then there is a non-trivial interval of $p\in (0,1)$ such that $\omega$ has infinitely many infinite clusters, for almost every $(x,\omega)\in\tilde X$.

Here, we view every $\omega\in\{0,1\}^{E_x}$ as a subgraph of $\mathcal G_x$. Recall that we allow $\mathcal G_x$ (and therefore $\omega$) to have multiple edges joining the same two points.

\begin{theorem}\label{percolation} In the setting from above, assume that  $\frac{1}{(2n-\|T\|)+1}<p<\frac{1}{\|T\|}$.

 Then $\omega$ has infinitely many infinite clusters, for $\tilde\mu$-almost every $(x,\omega)\in\tilde X$.
\end{theorem}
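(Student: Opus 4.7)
The plan is to prove the theorem in two steps, corresponding to the lower and upper bounds on $p$.

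\emph{Existence of an infinite cluster.} For the lower bound, I would first establish that $\omega$ has at least one infinite cluster $\tilde{\mu}$-almost surely. Applying Lemma \ref{iso}, the edge-isoperimetric constant satisfies $\iota(\mathcal{G}_x) \ge 2n - \|T\|$ for $\mu$-almost every $x$. Combined with the Benjamini--Schramm bound (Theorem \ref{BS96}), this gives $p_c(\mathcal{G}_x) \le \frac{1}{2n - \|T\| + 1}$ almost surely. Since $p > \frac{1}{2n - \|T\| + 1}$, Kolmogorov's 0-1 law (applied fiberwise and then integrated using the ergodicity of $\mathcal R$ and hence of $\tilde{\mathcal{R}}$ from Lemma \ref{ergodic}) yields at least one infinite cluster almost surely. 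Each $\mathcal{G}_x$ is connected (since $\theta_1, \dots, \theta_n$ generate $\mathcal{R}$), so by the Newman--Schulman result (Lemma \ref{perco}(1)) the number of infinite clusters is almost surely in $\{0, 1, \infty\}$. It therefore suffices to rule out a unique infinite cluster.

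\emph{Non-uniqueness.} For the upper bound $p < 1/\|T\|$, I would argue by contradiction: assume $\omega$ almost surely has a unique infinite cluster. Then by Lemma \ref{perco}(2) this cluster has one end almost surely. The goal is to derive a contradiction from $p\|T\| < 1$. The natural strategy is a mass-transport / $\ell^2$ argument on the equivalence relation $\tilde{\mathcal{R}}$: using one-endedness, one equivariantly selects a ``direction toward the end'' at each vertex of the cluster, and assembles a nonzero vector $\xi \in L^2(\mathcal{R}, m)$ whose Rayleigh quotient $\langle T\xi, \xi\rangle / \|\xi\|^2$ is at least $1/p$, contradicting the assumption $\|T\| < 1/p$.

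\emph{Main obstacle.} The hard step is the construction outlined above: one must adapt percolation techniques developed for non-amenable Cayley graphs to the graphed equivalence relation setting. Here the role played by the spectral radius $\rho$ of the simple random walk on a Cayley graph is taken by $\|T\|/(2n)$, and the classical bound $p_u \ge 1/(2n\rho)$ must be upgraded to the assertion $p_u(\mathcal{G}_x) \ge 1/\|T\|$ almost surely, using the $\tilde{\mathcal{R}}$-invariance of $\tilde{\mu}$ and the insertion/deletion tolerance bounds $\lambda_p^E(\Pi_e A) \ge p\,\lambda_p^E(A)$ and $\lambda_p^E(\Pi_{\neg e} A) \ge (1-p)\,\lambda_p^E(A)$. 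I would first attempt to lift Burton--Keane / H\"aggstr\"om--Peres--Schonmann-type arguments directly to $\tilde{\mathcal{R}}$, since the measurability and ergodicity of the cluster equivalence relation on its infinite locus provide the ``transitivity'' substitute that the Cayley-graph arguments ordinarily require.
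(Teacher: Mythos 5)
Your first step is exactly the paper's: Lemma \ref{iso} plus Theorem \ref{BS96} give $p_c(\mathcal G_x)\le \frac{1}{2n-\|T\|+1}<p$ a.e., ergodicity of $\tilde{\mathcal R}$ makes the number of infinite clusters an a.s.\ constant $N_p$, and insertion tolerance (Lemma \ref{perco}(1)) forces $N_p\in\{1,\infty\}$. That part is fine and faithful to the paper.

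The genuine gap is in the non-uniqueness step, and it is precisely the step you flag as ``the hard step'' and then leave as a sketch. You propose an equivariant ``direction toward the end'' selection feeding into an $\ell^2$/mass-transport Rayleigh-quotient bound, but you never actually construct the vector $\xi$, and it is not at all clear how one-endedness by itself would produce a vector with Rayleigh quotient $\ge 1/p$ against $T$. There is no obvious reason such a construction should see the factor $p$ at all; mass-transport arguments in this vein typically control isoperimetry or expected degree, not walk-return generating functions. The paper uses a completely different mechanism, namely Schramm's cycle-counting bound $p_u\ge 1/\gamma$ adapted to graphed equivalence relations. Concretely: one-endedness of the unique infinite cluster, combined with the fact that a.e.\ $\omega$ contains a simple bi-infinite path (because $p_c(\omega)=p_c(\mathcal G_x)/p<1$ and bounded-degree graphs with $p_c=1$ have no such path), forces infinitely many open simple cycles through the root $x$ with positive $\tilde\mu$-probability. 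Writing $A_{i_1,\dots,i_k}$ for the set of $x$ at which $\psi_{i_k}\cdots\psi_{i_1}$ traces a simple cycle, the event of an open such cycle has $\tilde\mu$-measure $p^k\mu(A_{i_1,\dots,i_k})$, so the Borel--Cantelli direction gives $\sum_k p^k\sum_{i_1,\dots,i_k}\mu(A_{i_1,\dots,i_k})=\infty$. This sum is dominated term-by-term by $\sum_k p^k\langle T^k\mathbf 1_\Delta,\mathbf 1_\Delta\rangle$, whose divergence forces $p\|T\|\ge 1$ --- the sought contradiction. So the contradiction comes from counting cycles against the return-probability generating function of $T$, not from a flow/end-direction argument. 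Without that cycle-counting idea (or a worked-out substitute), your proposal does not yet establish non-uniqueness, and the Burton--Keane / H\"aggstr\"om--Peres--Schonmann machinery you mention addresses different questions (uniqueness in amenable settings, monotonicity of uniqueness) and does not by itself yield $p<1/\|T\|\Rightarrow$ non-uniqueness.
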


\begin{remark}
Let $\mathcal G$ be the Cayley graph of a countable group $\Gamma$ with respect to a finite symmetric set of generators $S$. Let $\lambda:\Gamma\rightarrow\mathcal U(\ell^2\Gamma)$ be the left regular representation of $\Gamma$. Put $T=\sum_{g\in S}\lambda(g)$. I. Pak and T. Smirnova-Nagnibeda showed that if $\|T\|\leq \frac{|S|}{2}$, then $p_c(\mathcal G)<p_u(\mathcal G)$ (see \cite{PS-N00}). Theorem \ref{percolation} is an analogue of their result for equivalence relations.
\end{remark}

Towards Theorem \ref{percolation}, we first prove three lemmas:

\begin{lemma}\label{ber}
$\tilde{\mathcal R}$ is isomorphic to the Bernoulli extension of $\mathcal R$ with base space $(\{0,1\}^n,\lambda_p^n)$.
\end{lemma}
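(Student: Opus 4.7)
The plan is to exhibit an explicit measure-preserving Borel isomorphism $\Phi:X_K\to\tilde X$ with base space $(K,\kappa)=(\{0,1\}^n,\lambda_p^n)$, and to verify that it intertwines $\mathcal R_K$ with $\tilde{\mathcal R}$. The key observation driving the construction is that the edge set $E_x$ has a canonical Borel labeling by $[x]_{\mathcal R}\times\{1,\dots,n\}$: by definition, the edges of $\mathcal G_x$ are precisely the labeled pairs $(y,\theta_i(y))$ for $y\in[x]_{\mathcal R}$ and $i\in\{1,\dots,n\}$, and the assignment $(y,i)\mapsto(y,\theta_i(y))$ is the natural bijection (which also accounts for any multi-edges arising when $\theta_i(y)=\theta_j(y)$ for distinct $i,j$). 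Under this identification, specifying $\omega\in\{0,1\}^{E_x}$ is the same as specifying a function $[x]_{\mathcal R}\to\{0,1\}^n=K$.

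Concretely, I would set $\Phi(x,\eta)=(x,\omega_\eta)$ with $\omega_\eta(y,\theta_i(y)):=\eta(y)_i$, where $\eta(y)=(\eta(y)_1,\dots,\eta(y)_n)\in K$ denotes the coordinates of $\eta(y)$. Three short checks should then complete the argument. First, $\Phi$ is Borel and a bijection modulo null sets, with inverse obtained by reading off the coordinates of $\omega$ at each vertex. Second, for measure preservation, I would disintegrate $\mu_\kappa$ and $\tilde\mu$ over $(X,\mu)$; the fiber over $x$ carries the conditional law $\kappa^{[x]_{\mathcal R}}=(\lambda_p^n)^{[x]_{\mathcal R}}$ on the $X_K$ side and $\lambda_p^{E_x}$ on the $\tilde X$ side, and these two product measures agree under the edge-labeling bijection $E_x\cong[x]_{\mathcal R}\times\{1,\dots,n\}$. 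Third, the equivalence relations match tautologically: both $\mathcal R_K$ and $\tilde{\mathcal R}$ are defined by the same recipe, namely $\mathcal R$ in the first coordinate together with equality in the second, and $\eta=\eta'$ on $[x]_{\mathcal R}=[x']_{\mathcal R}$ is equivalent to $\omega_\eta=\omega_{\eta'}$ under $\Phi$.

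The only delicate point will be confirming the global Borel measurability of $\Phi$ as a map of standard Borel spaces, i.e., that the fiberwise edge-labeling bijections $E_x\cong[x]_{\mathcal R}\times\{1,\dots,n\}$ fit together measurably as $x$ varies. This follows from the fact that $\mathcal R$ and the generators $\theta_1,\dots,\theta_n$ are Borel, so that the pointwise-defined edges assemble into a Borel family; I do not anticipate a serious obstacle here, since the content of the lemma is essentially a careful unpacking of the two parallel product constructions on a countable equivalence class.
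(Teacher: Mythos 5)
Your proposal is correct and takes the same route as the paper: both rest on the canonical bijection $\beta_x\colon [x]_{\mathcal R}\times\{1,\dots,n\}\to E_x$, $(y,i)\mapsto(y,\theta_i(y))$, and its compatibility with the $\mathcal R$-class structure, from which the identification of product measures and of the two equivalence relations is immediate. You simply unpack the resulting isomorphism $\Phi$ more explicitly than the paper, whose proof records the bijection and then declares the conclusion, so there is nothing substantively new in either direction.
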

{\it Proof.} For  $x\in X$, the map $\beta_x:[x]_{\mathcal R}\times\{1,...,n\}\rightarrow E_x$ given by $\beta_x(y,i)=(y,\theta_i(y))$ is a bijection. Moreover, if $[x]_{\mathcal R}=[y]_{\mathcal R}$ and we identify $E_x$ and $E_y$ in the natural way, then $\beta_x\equiv\beta_y$. It follows that $\tilde{\mathcal R}$ is indeed isomorphic to the Bernoulli shift over $\mathcal R$ with base space $(\{0,1\}^n,\lambda_p^n)$. \hfill$\square$

\begin{lemma}\label{N_p} For $(x,\omega)\in\tilde X$, let $N(x,\omega)$ be the number of infinite clusters of $\omega\in\{0,1\}^{E_x}$.

Then there exists $N_p\in\{0,1,\infty\}$ such that $N(x,\omega)=N_p$, for $\tilde\mu$-almost every $(x,\omega)\in\tilde X$.
\end{lemma}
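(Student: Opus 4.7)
The plan is to show (a) that $N$ is $\tilde{\mathcal R}$-invariant, hence a.e.\ constant by ergodicity, and then (b) that this constant lies in $\{0,1,\infty\}$ by applying the classical Newmann--Schulman dichotomy (Lemma \ref{perco}(1)) along $\mu$-a.e.\ fibre.

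First I would check the invariance. Suppose $(x,\omega)\tilde{\mathcal R}(y,\xi)$. Then $x\mathcal R y$, so $[x]_{\mathcal R}=[y]_{\mathcal R}$, which implies that the vertex sets and edge sets of $\mathcal G_x$ and $\mathcal G_y$ coincide under the natural identification, and $\omega=\xi$ identifies the two subgraphs. Hence $N(x,\omega)=N(y,\xi)$, so $N:\tilde X\to\mathbb N\cup\{\infty\}$ is $\tilde{\mathcal R}$-invariant.

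Next, by Lemma \ref{ber} the extension $\tilde{\mathcal R}$ is isomorphic to the Bernoulli extension of $\mathcal R$ with base $(\{0,1\}^{n},\lambda_{p}^{n})$. Since $\mathcal R$ is ergodic, Lemma \ref{ergodic} gives that $\tilde{\mathcal R}$ is ergodic. Consequently the $\tilde{\mathcal R}$-invariant function $N$ is $\tilde\mu$-almost everywhere equal to some constant $N_p\in\mathbb N\cup\{\infty\}$.

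It remains to rule out the values $N_p\in\{2,3,\dots\}$. Because $\{\theta_{1},\dots,\theta_{n}\}$ generates $\mathcal R$, for $\mu$-a.e.\ $x\in X$ the graph $\mathcal G_x$ is connected; moreover we may assume the classes of $\mathcal R$ are infinite (otherwise $N\equiv 0$ and the conclusion is trivial). By Fubini applied to $d\tilde\mu(x,\omega)=d\lambda_{p}^{E_{x}}(\omega)\,d\mu(x)$, there is a $\mu$-conull set of $x\in X$ for which $N(x,\omega)=N_p$ for $\lambda_{p}^{E_{x}}$-a.e.\ $\omega$. For any such $x$, the Bernoulli$(p)$ bond percolation on the connected infinite multigraph $\mathcal G_x$ is insertion tolerant, so Lemma \ref{perco}(1) (the Newmann--Schulman $0$-$1$-$\infty$ law) forces the number of infinite clusters of $\omega$ to lie in $\{0,1,\infty\}$ almost surely. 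Combining these two statements gives $N_p\in\{0,1,\infty\}$, as required. The argument is essentially a packaging step, so no serious obstacle is expected; the only care needed is to justify that the fibre graphs are connected and to invoke the correct ergodicity statement for the Bernoulli extension.
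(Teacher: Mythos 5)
Your proof is correct and follows essentially the same route as the paper's: establish ergodicity of $\tilde{\mathcal R}$ via Lemmas \ref{ber} and \ref{ergodic} to get a.e.\ constancy of $N$, then pass to a single fibre $x$ where $\mathcal G_x$ is connected and invoke Lemma \ref{perco}(1). The only differences are cosmetic (you spell out the $\tilde{\mathcal R}$-invariance of $N$ and the Fubini step that the paper leaves implicit), so there is nothing substantive to add.
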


{\it Proof.} Combining lemmas  \ref{ergodic} and \ref{ber} yields that $\tilde {\mathcal R}$ is ergodic.  Since the measurable function $N:\tilde X\rightarrow\mathbb N\cup\{\infty\}$ is $\tilde{\mathcal R}$-invariant, we can find 
$N_p\in\mathbb N\cup\{\infty\}$ such that $N(x,\omega)=N_p$, for almost every $(x,\omega)\in\tilde X$. Hence, we can find $x\in X$ such that  $\omega$ has $N_p$ infinite clusters, for $\lambda_p^{E_x}$-almost every 
$\omega\in \{0,1\}^{E_x}$. Since $\mathcal G_x$ is connected, Lemma \ref{perco} (1) implies that $N_p\in\{0,1,\infty\}$. \hfill$\square$

\begin{lemma}\label{N_pp} If $\frac{1}{(2n-\|T\|)+1}<p\leq1$, then $N_p\in\{1,\infty\}$. 
\end{lemma}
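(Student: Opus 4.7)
The plan is to exclude the case $N_p = 0$ from Lemma~\ref{N_p}; then the stated conclusion follows immediately. The approach is to show that under the hypothesis on $p$, the Bernoulli$(p)$ percolation on $\mathcal{G}_x$ is supercritical for $\mu$-almost every $x$, so an infinite cluster exists almost surely.

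First, I would combine Lemma~\ref{iso} with Theorem~\ref{BS96} to control the critical probability of each leafwise graph. Lemma~\ref{iso} gives $\iota(\mathcal{G}_x)\geq 2n-\|T\|$ for $\mu$-almost every $x\in X$, and then the Benjamini--Schramm bound yields
\[
p_c(\mathcal{G}_x)\;\leq\;\frac{1}{\iota(\mathcal{G}_x)+1}\;\leq\;\frac{1}{(2n-\|T\|)+1},
\]
for $\mu$-almost every $x$.

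Next, under the assumption $p>\frac{1}{(2n-\|T\|)+1}$ (with $p<1$), we have $p>p_c(\mathcal{G}_x)$ for almost every $x$. Hence, by the standard monotone coupling of Bernoulli bond percolations and the Kolmogorov $0$--$1$ law recalled before Theorem~\ref{BS96}, the event that $\omega$ contains an infinite cluster has $\lambda_p^{E_x}$-probability equal to $1$. Integrating over $x$ against $\mu$ shows that $N(x,\omega)\geq 1$ for $\tilde\mu$-almost every $(x,\omega)\in\tilde X$, so $N_p\neq 0$. The edge case $p=1$ is immediate: then $\omega=E_x$ almost surely, so $\omega$ has the single infinite connected component $[x]_{\mathcal{R}}$, giving $N_1=1$. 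Together with Lemma~\ref{N_p}, we conclude $N_p\in\{1,\infty\}$.

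There is no real obstacle here; the lemma is essentially an assembly of two ingredients already at hand (the spectrally-driven isoperimetric lower bound and the Benjamini--Schramm upper bound on $p_c$). The only small care needed is to recall that $p>p_c$ indeed forces $\alpha(p)=1$, which is the standard monotonicity of percolation together with the $0$--$1$ law, both used implicitly in the definition of $p_c$ given earlier.
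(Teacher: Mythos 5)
Your proof is correct and follows essentially the same argument as the paper: combine Lemma~\ref{iso} with Theorem~\ref{BS96} to bound $p_c(\mathcal{G}_x)$ from above by $\frac{1}{(2n-\|T\|)+1}$, conclude that percolation is supercritical almost everywhere so $N_p\geq 1$, and then invoke Lemma~\ref{N_p}. The brief remark on the edge case $p=1$ is a reasonable extra care, though the surrounding Notation~\ref{setting} already fixes $p\in(0,1)$ so it is not strictly needed.
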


{\it Proof.} By combining Lemma \ref{iso} and Theorem \ref{BS96} we get that \begin{equation}\label{p_c}p_c(\mathcal G_x)\leq \frac{1}{\iota(\mathcal G_x)+1}\leq\frac{1}{(2n-\|T\|)+1}<p,\;\;\;\text{for}\;\;\mu\text{-almost every}\;\; x\in X.\end{equation}

Therefore, for almost every $x\in X$, we have that $\omega$ has at least one infinite cluster, for $\lambda_p^{E_x}$-almost every $\omega\in \{0,1\}^{E_x}$.
Thus, $N(x,\omega)\geq 1$, for almost every $(x,\omega)\in\tilde X$. Together with Lemma \ref{N_p} this gives that $N_p\in\{1,\infty\}$.
\hfill$\square$

We are now ready to prove Theorem \ref{percolation}. The proof is an adaptation of an argument due to O. Schramm showing that $p_u(\mathcal G)\geq 1/\gamma(\mathcal G)$, for any transitive graph $\mathcal G$ (see \cite[Theorem 7.33]{LP13}). Here, $\gamma(\mathcal G):=\limsup_{n\rightarrow\infty}a_n(\mathcal G)^{1/n}$, where $a_n(\mathcal G)$ is the number of simple cycles of length $n$ in $\mathcal G$.

\subsection{Proof of Theorem \ref{percolation}} 
By Lemma \ref{N_pp} we have that  $N_p\in\{1,\infty\}$. To show that $N_p=\infty$, assume by contradiction that $N_p=1$. Thus,  $\omega$ has a unique infinite cluster, for almost every $(x,\omega)\in\tilde X$. Denote by $C(x,\omega)$ this unique infinite cluster.
Lemma \ref{perco} (2) then implies that $C(x,\omega)$ has one end, for almost every $(x,\omega)\in\tilde X$.

 Let $\mathcal A$ be the set of $(x,\omega)\in\tilde X$ such that $\omega$ (viewed again as a subgraph of $\mathcal G_x=([x]_{\mathcal R},E_x)$) contains an infinite number of simple cycles through the vertex $x$. We continue with the following:

{\bf Claim.} $\tilde\mu(\mathcal A)>0$.

{\it Proof of the claim.}
By inequality \ref{p_c} we have that $p_c(\mathcal G_x)<p$, for almost every $x\in X$. In combination with formula \ref{pc} we get that $p_c(\omega)=p_c(\mathcal G_x)/p<1$, for almost every $(x,\omega)\in\tilde X$. On the other hand,  if a graph $\mathcal G$ of bounded degree does not contain a simple bi-infinite path, then $p_c(\mathcal G)=1$ (see \cite[Lemma 3.19]{LPS06}). Altogether, we deduce that $\omega$ contains a simple bi-infinite path, for almost every $(x,\omega)\in\tilde X$.

Recall that we view $\omega$ as a graph with vertex set $[x]_{\mathcal R}$.
It follows that there is a measurable map $\theta:X\rightarrow X$  such that for almost every $x\in X$, we have that $\theta(x)\in [x]_{\mathcal R}$ and that the set of $\omega\in \{0,1\}^{E_x}$ for which there is a simple bi-infinite path in $\omega$ containing $\theta(x)$ has positive measure. Since $\mu(\theta(X))>0$ and $\mathcal G_{\theta(x)}$ is naturally identified with $\mathcal G_x$, the set $\mathcal B$ of $(x,\omega)\in\tilde X$ for which there exists a simple bi-infinite path in $\omega$ containing $x$  must also have positive measure.

Since $\omega$ has a unique infinite cluster, we derive that there is a simple bi-infinite path  in $C(x,\omega)$ containing $x$, for almost every $(x,\omega)\in\mathcal B$.
Now, we view such an infinite path as the union of two disjoint infinite simple paths starting at $x$. Since $C(x,\omega)$ has only one end, these two paths can be connected by paths in $C(x,\omega)$ that do not intersect any given finite subset of $C(x,\omega)$. This implies that there are an infinite number of simple cycles in $C(x,\omega)$  (and hence in $\omega$) through $x$, for almost every $(x,\omega)\in\mathcal B$.  
We conclude that $\tilde\mu(\mathcal A)\geq\tilde\mu(\mathcal B)>0$, which proves the claim. \hfill$\square$

Next, let $m=2n$ and enumerate $\{\psi_1,...,\psi_m\}=\{\theta_1,...,\theta_n,\theta_1^{-1},...,\theta_n^{-1}\}$. Note that for every $y\in [x]_{\mathcal R}$ there are exactly $m$ edges having $y$ as an endpoint, namely $(y,\psi_i(y))$, for $i\in\{1,...,m\}$.

For $k\geq 1$ and $i_1,...,i_k\in\{1,...,m\}$, we define $A_{i_1,...,i_k}$ to be the set of $x\in X$ such that $\psi_{i_k}...\psi_{i_2}\psi_{i_1}(x)=x$ and $x\not=\psi_{i_a}...\psi_{i_1}(x)\not=\psi_{i_b}...\psi_{i_1}(x)\not=x$, for all $1\leq a<b<k$. In this case, $x,\psi_{i_1}(x),...,\psi_{i_k}...\psi_{i_1}(x)$ is a simple cycle in $\mathcal G_x$. Conversely, any simple cycle in $\mathcal G_x$ containing $x$ is of this form. Further, we define $\tilde A_{i_1,...,i_k}$ to be the measurable set of $(x,\omega)\in\tilde X$ such that $x\in A_{i_1,...,i_k}$ and the cycle  $x,\psi_{i_1}(x),...,\psi_{i_k}...\psi_{i_1}(x)$ belongs to $\omega$. 

Then $\mathcal A$ consists of the points $(x,\omega)\in\tilde X$ which belong to infinitely many sets of the form $\tilde A_{i_1,...,i_k}$. Since $\tilde\mu(\mathcal A)>0$ by the claim, we derive that $\sum_{k=1}^{\infty}\sum_{i_1,...,i_k\in\{1,...,m\}}\tilde\mu(\tilde A_{i_1,...,i_k})=\infty.$ Since $\tilde\mu(\tilde A_{i_1,...,i_k})=p^k\mu(A_{i_1,...,i_k})$ we conclude that \begin{equation}\label{cycles}\sum_{k=1}^{\infty}p^k\;\left(\sum_{i_1,...,i_k\in\{1,...,m\}}\;\mu(A_{i_1,...,i_k})\right)=\infty.\end{equation}

Let $\Delta=\{(x,x)|x\in X\}$ and view ${\bf 1}_{\Delta}\in L^2(\mathcal R,m)$. Then $\langle u(\psi)({\bf 1}_{\Delta}),{\bf 1}_{\Delta}\rangle=\mu(\{x\in X|\psi(x)=x\})$, for every $\psi\in [\mathcal R]$. Hence, since $T=\sum_{i=1}^mu(\psi_i)$, for every $k\geq 1$ we have that \begin{equation}\label{T} \langle T^k({\bf 1}_{\Delta}),{\bf 1}_{\Delta}\rangle=\sum_{i_1,...,i_k\in\{1,...,m\}}\mu(\{(x\in X|\psi_{i_k}...\psi_{i_1}(x)=x\})\geq\sum_{i_1,...,i_k\in\{1,...,m\}}\mu(A_{i_1,...,i_k}).\end{equation}

By combining equations \ref{cycles} and \ref{T} we deduce that $\sum_{k=1}^{\infty}p^k\langle T^k({\bf 1}_{\Delta}),{\bf 1}_{\Delta}\rangle=\infty$. This implies that $p\|T\|\geq 1$ which leads to the desired contradiction.
\hfill$\square$


\section{Ergodicity of the cluster equivalence relation}\label{erg}

This section is devoted to the second part of the proof of the main assertion of Theorem \ref{main}.

Consider the setting from \ref{setting}. In particular, $p\in (0,1)$ is fixed, and $\tilde X$ is the set of pairs $(x,\omega)$, with $x\in X$ and $\omega\in\{0,1\}^{E_x}$, endowed with the probability measure given by $d\tilde\mu(x,\omega)=d\lambda_p^{E_x}(\omega)d\mu(x)$.  Two points $(x,\omega),(y, \xi)\in\tilde X$ are $\tilde{\mathcal R}$-equivalent if $x\mathcal Ry$ and $\omega=\xi$. Recall that we view every $\omega\in\{0,1\}^{E_x}$ as a subgraph of $\mathcal G_x=([x]_{\mathcal R},E_x)$.

Following D. Gaboriau  \cite[Section 1.2]{Ga05} we define a subequivalence relation $\tilde{\mathcal R}_{\text{cl}}$ of $\tilde{\mathcal R}$, called the {\it cluster equivalence relation}. 
Thus, we say that  two points $(x,\omega),(y, \xi)\in\tilde X$ are $\tilde{\mathcal R}_{\text{cl}}$-equivalent if they are $\tilde{\mathcal R}$-equivalent and $x,y$ belong to the same cluster of $\omega=\xi$. 

For $(x,\omega)\in\tilde X$, we let $C(x,\omega)$ be the cluster of $x$ in $\omega$.
We denote by $U^{\infty}$ the set of points $(x,\omega)\in\tilde X$ such that $C(x,\omega)$ is infinite. Then $U^{\infty}$ is an $\tilde{\mathcal R}_{\text{cl}}$-invariant set and the restriction 
${\tilde{\mathcal R}_{\text{cl}}} {\resto U^{\infty}}$ has infinite classes.

In this section we show that if $\omega$ has infinitely many infinite clusters, for almost every $(x,\omega)\in\tilde X$, then ${\tilde{\mathcal R}_{\text{cl}}} {\resto U^{\infty}}$ is ergodic and has cost $>1$. 

\begin{theorem}\label{ergo}
Assume that $\omega$ has infinitely many infinite clusters, for $\tilde\mu$-almost every $(x,\omega)\in\tilde X$.

Then the restriction ${\tilde{\mathcal R}_{\text{cl}}} {\resto U^{\infty}}$ is ergodic. 
\end{theorem}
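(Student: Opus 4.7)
The strategy is to invoke the indistinguishability of infinite clusters, established by Lyons--Schramm \cite{LS99} for transitive unimodular graphs and extended to unimodular random networks by Aldous--Lyons \cite{AL06}; Gaboriau's framework of Bernoulli percolation on equivalence relations \cite{Ga05} places our situation in this setting. Let $A \subset U^\infty$ be $\tilde{\mathcal R}_{\text{cl}}$-invariant with $\tilde\mu(A) > 0$; our aim is to show that $A = U^\infty$ modulo null sets. The $\tilde{\mathcal R}_{\text{cl}}$-invariance of $A$ means precisely that, for $\tilde\mu$-a.e.\ $(x,\omega) \in U^\infty$, membership of $(x,\omega)$ in $A$ is determined by the infinite cluster $C(x,\omega)$ alone, independently of the basepoint $x$; thus $A$ encodes a measurable property $\Pi$ of infinite clusters.

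Next I verify the hypotheses of the indistinguishability theorem: (i) $\tilde{\mathcal R}$ is ergodic by Lemma \ref{ergodic}; (ii) Bernoulli-$p$ percolation with $p \in (0,1)$ is both insertion- and deletion-tolerant; (iii) there are infinitely many infinite clusters $\tilde\mu$-a.s.\ by hypothesis; (iv) the Feldman--Moore measure $m$ on $\mathcal R$ supplies the mass-transport / unimodularity property needed to place us in the \cite{AL06} framework. Indistinguishability then asserts that, for $\tilde\mu$-a.e.\ $(x,\omega)$, either every infinite cluster of $\omega$ has property $\Pi$ or no infinite cluster does.

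Finally, consider
\[
\tilde A := \{(x,\omega) \in \tilde X : \text{every infinite cluster of } \omega \text{ has property } \Pi\}.
\]
Since $\omega$ is constant along $\tilde{\mathcal R}$-classes, $\tilde A$ is $\tilde{\mathcal R}$-invariant. By construction $\tilde A \cap U^\infty \subset A$ (for $(x,\omega) \in \tilde A \cap U^\infty$, the cluster $C(x,\omega)$ is infinite and has $\Pi$, so $(x,\omega) \in A$), and by indistinguishability $A \subset \tilde A$ modulo null sets. By ergodicity of $\tilde{\mathcal R}$ (Lemma \ref{ergodic}) one has $\tilde\mu(\tilde A) \in \{0,1\}$; since $\tilde\mu(A) > 0$ forces $\tilde\mu(\tilde A) = 1$, combining with $\tilde A \cap U^\infty \subset A$ gives $A = U^\infty$ modulo null sets, as desired.

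The main obstacle is ensuring that the indistinguishability theorem applies in our generality: the classical Lyons--Schramm result is phrased for transitive unimodular graphs, whereas here the graphs $\mathcal G_x$ vary with $x \in X$ and the ambient symmetry is the equivalence relation $\mathcal R$ together with a finite graphing. Transferring the argument to the unimodular random rooted graph framework of Aldous--Lyons \cite{AL06}, in which mass transport is supplied by the Feldman--Moore measure on $\mathcal R$, is by now standard but must be carefully executed; this is the technical heart of the proof.
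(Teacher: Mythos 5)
Your proposal is correct and follows essentially the same route as the paper: both apply the Aldous--Lyons indistinguishability theorem for unimodular random networks (derived via the identification $\Phi(x)=(\mathcal G_x,x)$ and the observation that $\tilde\mu$ is insertion tolerant) and then combine it with ergodicity of $\tilde{\mathcal R}$ (Lemma \ref{ergodic}). The bookkeeping differs slightly—the paper shows directly that any $\tilde{\mathcal R}_{\text{cl}}\resto U^\infty$-invariant set is in fact $\tilde{\mathcal R}\resto U^\infty$-invariant, while you lift $A$ to the $\tilde{\mathcal R}$-invariant set $\tilde A$ on all of $\tilde X$—but this is cosmetic.
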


R. Lyons and O. Schramm proved that the infinite clusters that may appear in Bernoulli$(p)$ bond percolation on a transitive graph are {\it indistinguishable}  (see \cite[Theorem 1.1]{LS99} for the precise statement).
D. Gaboriau and R. Lyons then showed that indistinguishability of infinite clusters is equivalent to ergodicity of the restriction of the cluster equivalence relation to its infinite locus (see \cite[Proposition 5]{GL07}). Theorem \ref{ergo} is a generalization of these results.
Its proof is an immediate consequence of  work of D. Aldous and R. Lyons \cite{AL06} who noted that the results from \cite{LS99} extend to the more general context of unimodular random networks. 
More precisely, we will show that the following result, stated implicitly in \cite{AL06}, implies Theorem \ref{ergo}.

\begin{theorem}\label{indis}  Let $\mathcal A$ be a Borel subset of the set $\{(A,x)| A\in\{0,1\}^{[x]_{\mathcal R}}\times\{0,1\}^{E_x}, x\in X\}$.
Assume that if $(A,x)\in \mathcal A$ and $y\in [x]_{\mathcal R}$, then $(A,y)\in\mathcal A$.

Then the set of $(x,\omega)\in\tilde X$, for which there exist two infinite clusters $C_1,C_2$ of $\omega$ such that $((C_1,\omega),x)\in\mathcal A$ and $((C_2,\omega),x)\notin\mathcal A$, has $\tilde\mu$-measure zero.
\end{theorem}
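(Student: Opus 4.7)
The plan is to recognize Theorem \ref{indis} as the analog, for graphed pmp equivalence relations, of the indistinguishability of infinite clusters theorem of Lyons--Schramm \cite{LS99}, which was extended to the formalism of unimodular random networks by Aldous--Lyons \cite{AL06}.

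The first step is to recast the setup of Notation \ref{setting} as a unimodular random network. Sampling $x$ from $\mu$ gives a random rooted graph $(\mathcal G_x, x)$, and the pmp property of $\mathcal R$ translates into the mass transport principle
\begin{equation*}
\int_X \sum_{y \in [x]_{\mathcal R}} F(x, y) \, d\mu(x) = \int_X \sum_{y \in [x]_{\mathcal R}} F(y, x) \, d\mu(x)
\end{equation*}
for any Borel $F \colon \mathcal R \to [0, \infty]$, which follows from a Fubini argument identical to the one used to define the measure $m$ on $\mathcal R$. Adjoining the i.i.d. Bernoulli$(p)$ decoration $\omega$ on $E_x$ preserves unimodularity, yielding a unimodular random network whose underlying probability space is $(\tilde X, \tilde \mu)$. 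The hypothesis on $\mathcal A$ says precisely that $\mathcal A$-membership of a pair $(C, \omega)$ is independent of the chosen root $y \in [x]_{\mathcal R}$, so $\mathcal A$ singles out a well-defined cluster-intrinsic property: for a cluster $C$ of $\omega$ inside the class of $x$, one may speak unambiguously of $C$ being \emph{of type $\mathcal A$}.

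The second step is to run the Lyons--Schramm indistinguishability argument in this framework. Suppose for contradiction that with positive probability there exist infinite clusters $C_1, C_2$ of $\omega$, both contained in $[x]_{\mathcal R}$, with $C_1$ of type $\mathcal A$ and $C_2$ not. A measurability argument shows that with positive probability one can find two clusters of opposing types whose vertex sets come within bounded graph-distance of each other in $\mathcal G_x$. Insertion tolerance of Bernoulli bond percolation with $p \in (0, 1)$ then allows one to merge such pairs of clusters by inserting an edge along a short path; the merged cluster carries some single $\mathcal A$-type, and a bookkeeping argument via the mass transport principle shows that the resulting expected change in the count of clusters of a given type is incompatible with the invariance of $\tilde\mu$ under the insertion operation, producing the desired contradiction.

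The main obstacle is that the original proof in \cite{LS99} relies on the homogeneity of the underlying transitive graph, both to localize cluster pieces and to compare the statistics of typed clusters at different points. Replacing transitivity with the mass transport principle is conceptually straightforward but requires the careful combinatorial bookkeeping developed, in the unimodular random network setting, in \cite{AL06}. Theorem \ref{indis} is then obtained by specializing that indistinguishability statement back to the language of graphed pmp equivalence relations; this is the sense in which, as noted in the excerpt, it is stated implicitly in \cite{AL06}.
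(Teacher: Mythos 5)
Your proposal is correct and takes essentially the same route as the paper: reinterpret $(\mathcal G_x, x)_{x \in X}$ as a unimodular random rooted network (the paper cites \cite[Example 9.9]{AL06}), note that $\tilde\mu$ is Bernoulli$(p)$ percolation on it and hence insertion tolerant, and then invoke the Aldous--Lyons indistinguishability theorem \cite[Theorem 6.15]{AL06}, translating the conclusion back to the equivalence-relation language. The extra paragraph in which you sketch the Lyons--Schramm merging/mass-transport argument is just an outline of the proof of that cited theorem rather than a genuinely different argument, so the two proofs coincide.
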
 

Before deducing Theorem \ref{ergo} from Theorem \ref{indis}, let us   explain how the latter follows from \cite{AL06}. Recall from \cite[Section 2]{AL06} that a {\it network} is a (multi-)graph $\mathcal G=(V,E)$ together with a complete separable metric space $\Xi$ and maps from $V$ and $E$ to $\Xi$. A {\it rooted network} $(\mathcal G,o)$ is a network  with a distinguished vertex $o$. Then $\mathcal G_*$ denotes the set of  isomorphism classes of rooted connected locally finite networks. 

By \cite[Example 9.9]{AL06} the graphs $(\mathcal G_x)_{x\in X}$ give rise to a unimodular random rooted network. More precisely, consider the map $\Phi:X\rightarrow\mathcal G_*$ given by $\Phi(x)=(\mathcal G_x,x)$. Then the push-forward $\Phi_*\mu$ is a unimodular probability measure on $\mathcal G_*$ (see \cite[Definition 2.1]{AL06}). Moreover, the measure $\tilde\mu$ corresponds to  Bernoulli($p$) percolation on $\Phi_*\mu$. Since $p\in (0,1]$, we have that $\tilde\mu$  is insertion tolerant in the sense of \cite[Definition 6.4]{AL06}. Therefore, by \cite[Theorem 6.15]{AL06},  $\tilde\mu$ has indistinguishable infinite clusters. Finally, translating this fact leads to Theorem \ref{indis}.

{\it Proof of Theorem \ref{ergo}}.   Let $Y\subset U^{\infty}$ be a ${{\tilde{\mathcal R}}_{\text{cl}}}$-invariant Borel subset. 
We define  $\mathcal A$ as the set of $((C,\omega),x)$ with $x\in X$, $\omega\in\{0,1\}^{E_x}$ and $C$ infinite cluster of $\omega$ such that $(y,\omega)\in Y$, for all $y\in C$. 

Let $x\in X$, $\omega\in\{0,1\}^{E_x}$ and $C$ infinite cluster of $\omega$ such that  $((C,\omega),x)\not\in\mathcal A$. Then 
 $(y,\omega)\notin Y$, for some $y\in C$. But then for all $z\in C$ we have that $(z,\omega)\sim_{{{\tilde{\mathcal R}}_{\text{cl}}}}(y,\omega)$ and since $Y$ is   ${{\tilde{\mathcal R}}_{\text{cl}}}$-invariant, we deduce that $(z,\omega)\notin Y$.

Since $\mathcal A$ is clearly invariant under changing the ``root" $x$, Theorem \ref{indis} implies that  for almost every $(x,\omega)\in\tilde X$ we have that either $(y,\omega)\in Y$, for all $y$ contained in some infinite cluster of $\omega$, or  $(y,\omega)\notin Y$, for all $y$ contained in some infinite cluster of $\omega$.

This  implies that $Y$ is invariant under ${\tilde{\mathcal R}}{\resto U^{\infty}}$. Since by lemmas  \ref{ergodic} and \ref{ber} we have that ${\tilde{\mathcal R}}$ is ergodic, it follows that $\tilde\mu(Y)\in\{0,\tilde\mu(U^{\infty})\}$, which proves  that ${{\tilde{\mathcal R}}_{\text{cl}}}{\resto U^{\infty}}$  is ergodic.\hfill$\square$

\begin{proposition}\label{transient}
Assume $\omega$ has infinitely many infinite clusters, for $\tilde\mu$-almost every $(x,\omega)\in\tilde X$.
Then  the normalized cost of ${\tilde{\mathcal R}_{\text{cl}}} {\resto U^{\infty}}$ is $>1$.
\end{proposition}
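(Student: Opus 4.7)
The plan is to adapt the argument of Gaboriau--Lyons \cite[Prop.~14]{GL07} to the equivalence-relation setting, combining Gaboriau's cost theory \cite{Ga99} with the Lyons--Schramm indistinguishability theorem \cite{LS99} (in its unimodular-random-network version of \cite{AL06}). Set $\mathcal T := \tilde{\mathcal R}_{\text{cl}} \resto U^\infty$; by Theorem \ref{ergo}, $\mathcal T$ is ergodic, and since each class of $\mathcal T$ is an infinite cluster, the normalized cost of $\mathcal T$ is automatically $\geq 1$.

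To upgrade to strict inequality, I would first note that the natural graphing $\Phi$ of $\mathcal T$ given by the surviving percolation edges (for each $i$, the partial isomorphism sending $(x,\omega) \in U^\infty$ with $\omega((x,\theta_i(x)))=1$ to $(\theta_i(x),\omega)$) only produces an upper bound on $\text{cost}(\mathcal T)$, which does not, in general, fall below $1$. Instead, I would invoke Gaboriau's fundamental inequality
$$\text{cost}(\mathcal T) \geq 1 + \beta_1^{(2)}(\mathcal T)$$
from \cite{Ga99}, thereby reducing matters to showing $\beta_1^{(2)}(\mathcal T) > 0$.

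For this last point, in the non-uniqueness regime, the Lyons--Schramm arguments (already invoked in Theorem \ref{ergo} via \cite{AL06}) show that each infinite cluster has, almost surely, infinitely many ends: indistinguishability rules out any intermediate finite number, and insertion/deletion tolerance prevents the one-ended alternative when there are infinitely many infinite clusters. Through the Dirichlet-space and harmonic-cocycle formalism of \cite{AL06}, this abundance of ends produces non-trivial square-summable $1$-cycles on the groupoid of $\mathcal T$, equivalently non-trivial classes in the first $\ell^2$-cohomology, whence $\beta_1^{(2)}(\mathcal T) > 0$.

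The main obstacle is precisely this final bridge: rigorously translating \emph{infinitely many ends a.s.\ in each cluster} into a \emph{strictly positive} contribution to $\beta_1^{(2)}(\mathcal T)$. One cannot simply cite a treeability of $\mathcal T$ (cluster graphs can carry cycles), and the cost of any explicit edge graphing of $\mathcal T$ is only an upper bound; thus the heart of the argument is a quantitative lower bound on $\beta_1^{(2)}(\mathcal T)$ obtained from the end-structure of the clusters. Once that positivity is in hand, Gaboriau's inequality yields $\text{cost}(\mathcal T) > 1$ immediately.
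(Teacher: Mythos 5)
Your proposal correctly isolates the two key ingredients: (i) almost surely, each infinite cluster has infinitely many ends, and (ii) some bridge from this end-structure to strictly positive excess cost. For (i) you are on the right track, and the paper does essentially the same thing — it proves the claim by ruling out isolated ends, following Lyons--Schramm (Propositions 3.9 and 3.10 of \cite{LS99}) and using insertion/deletion tolerance via a mass-transport-style contradiction.

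The genuine gap is in step (ii), and you have named it yourself. You propose to reduce to showing $\beta_1^{(2)}(\mathcal T) > 0$ via the inequality $\mathrm{cost}(\mathcal T) \geq 1 + \beta_1^{(2)}(\mathcal T)$, and then to produce positive $\ell^2$-cohomology from the abundance of ends; but you explicitly acknowledge that you do not see how to complete this last translation, and it is indeed not a step one can wave through: there is no off-the-shelf statement giving a lower bound on $\beta_1^{(2)}$ of a graphed equivalence relation directly from the end-count of its leaf graphs. The paper avoids this entirely by taking the more direct route available within cost theory itself: it observes that the percolation edges give a finite graphing $\{\tilde\theta_i\}_{i=1}^n$ of $\tilde{\mathcal R}_{\mathrm{cl}} \resto U^\infty$ whose leaf graph at $(x,\omega)$ is precisely the cluster $C(x,\omega)$, and then cites Gaboriau's \cite[Corollaire~IV.24]{Ga99}, which states that an \emph{ergodic} pmp equivalence relation admitting a graphing whose leaf graphs have infinitely many ends almost everywhere has cost $>1$. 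Combined with the ergodicity established in Theorem \ref{ergo}, this yields the conclusion immediately. So the missing idea in your argument is this cost-theoretic result of Gaboriau: with it in hand, the $\ell^2$-Betti detour (and the hard step you flagged) is unnecessary. Note also that the Betti-number inequality you quote is not in \cite{Ga99} but in Gaboriau's later $\ell^2$-invariants paper, though this is a minor mis-citation compared to the substantive gap.
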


The proposition follows by combining Theorem \ref{ergo}, \cite{NS81}, and \cite[Corollaire IV.24]{Ga99}. For the reader's convenience we include a proof below.

{\it Proof.}  We begin with the following claim:

{\bf Claim.} Each infinite cluster of $\omega$ has infinitely many ends, for $\tilde\mu$-almost every $(x,\omega)\in\tilde X$.

{\it Proof of the claim.} The proof  is a straightforward adaptation of  the proofs of Propositions 3.9 and 3.10 in \cite{LS99}. 
By the discussion following \cite[Conjecture 4.1]{BS96} it is enough to show that no infinite cluster of $\omega$ has an isolated end. Assume that some cluster of $\omega$ has an isolated end, with positive probability. Then insertion tolerance guarantees that, with positive probability, a cluster of $\omega$ will have at least $3$ ends with one of them being isolated. 

Let $A_n$ be the set of $(x,\omega)\in\tilde X$ with the property that $C(x,\omega)\setminus\{y\in C(x,\omega)|\;d(x,y)\leq n\}$ has at least $3$ infinite components, where $d$ is the cluster metric. Our assumption implies  that the set of $(x,\omega)\in A_n$ for which $C(x,\omega)$ has an isolated end, has positive probability, for some $n\geq 1$.

If $C(x,\omega)\cap A_n\not=\emptyset$,  then we let $K(x,\omega)$ be the set of $y\in C(x,\omega)\cap A_n$ that are closest to $x$. Next, we let $\tilde m$ be the usual infinite measure of $\tilde{\mathcal R}$ and define $F:\tilde{\mathcal R}\rightarrow [0,1]$ by letting $$F((x,\omega),(y,\omega))=\begin{cases}|K(x,\omega)|^{-1}\;\;\text{if}\;\;C(x,\omega)\cap A_n\not=\emptyset\;\;\text{and}\;\;y\in K(x,\omega)\\0\;\;\text{otherwise}\end{cases}$$

Since $\sum_{(y,\omega)\in [(x,\omega)]_{\tilde{\mathcal R}}}F((x,\omega),(y,\omega))\in\{0,1\},$ for all $(x,\omega)\in\tilde X$, we get that $\int_{\tilde R}F\;\text{d}\tilde m\leq 1$.

 On the other hand, let $(x,\omega)\in A_n$ and $\eta$ be an isolated end of $C(x,\omega)$. Then we can find $B\subset C(x,\omega)$ finite and a neighborhood $D$ of $\eta$ such that the points in $C(x,\omega)\cap A_n$ that are closest to any given point $y\in D$ lie in $B$.
Thus, we have that $K(y,\omega)\subset B$, for all $(y,\omega)\in[(x,\omega)]_{\tilde{\mathcal R}}$ with $y\in D$. In particular, $|K(y,\omega)|\leq |B|$,  for all such $y$.

Since $D$ is infinite, it follows that $\sum_{(y,\omega), (z,\omega)\in [(x,\omega)]_{\tilde{\mathcal R}}, z\in B}F((y,\omega),(z,\omega))=\infty$. Since $B$ is finite, we derive that $\sum_{(y,\omega)\in [(x,\omega)]_{\tilde{\mathcal R}}}F((y,\omega),(z,\omega))=\infty$, for some $z\in B$.  This clearly implies that $\int_{\tilde{\mathcal R}}F\;\text{d}\tilde m=\infty$, which gives a contradiction.\hfill$\square$

 For $i\in\{1,...,n\}$, let $A_i$ be the set of $(x,\omega)\in U^{\infty}$ such that  $x$ and $\theta_i(x)$ lie in the same cluster of $\omega$. We define
$\tilde\theta_i\in[[\tilde{\mathcal R}_{\text{cl}}]]$ by letting $\tilde{\theta_i}(x,\omega)=(\theta_i(x),\omega)$, for all $(x,\omega)\in A_i$.
Then $\{\tilde{\theta_i}\}_{i=1}^n$ is a generating graphing of $\tilde{\mathcal R}_{\text{cl}}\resto {U^{\infty}}$. Moreover,  for all $(x,\omega)\in U^{\infty}$, the graph of the equivalence class of $(x,\omega)$ in $\tilde{\mathcal R}_{\text{cl}}\resto U^{\infty}$ associated to $\{\tilde{\theta_i}\}_{i=1}^n$ is isomorphic to the cluster $C(x,\omega)$. 

By the claim, the latter has infinitely many ends, for almost every $(x,\omega)\in U^{\infty}$. 
Since ${\tilde{\mathcal R}_{\text{cl}}}{\resto U^{\infty}}$  is ergodic by Theorem \ref{ergo},   \cite[Corollaire IV.24]{Ga99} gives that ${\tilde{\mathcal R}_{\text{cl}}}{\resto U^{\infty}}$  has normalized cost $>1$. 
 \hfill$\square$
 


\section{Proofs of Theorem \ref{main} and Corollary \ref{cor}}

\subsection{A generalization of Theorem \ref{main}} 
The main goal of this section is to prove Theorem \ref{main}.
Let $\cR$ be a non-amenable countable ergodic pmp equivalence relation on a probability space $(X,\mu)$.  We would like to understand for which probability spaces $(K,\kappa)$ there exist a free ergodic pmp action $\mathbb F_2\curvearrowright (X_K,\mu_\kappa)$ such that $\mathcal R(\mathbb F_2\curvearrowright X_K)\leq \cR_K$, almost everywhere. While we expect that this should be the case for any non-trivial $(K,\kappa)$, at this point we only have partial answers. The next theorem, which clearly generalizes Theorem \ref{main}, summarizes our main results. Recall the definition of the Shannon entropy $H(K,\kappa)$ from \S \ref{sec:isom}.

\begin{thm}\label{thm:other}
Let $\cR$ be a non-amenable countable ergodic pmp equivalence relation on $(X,\mu)$.
\begin{enumerate}
\item There is a number $\beta(\cR) \in [0,\infty]$ such that if $H(K,\kappa)>\beta(\cR)$, then there exists a free ergodic pmp action $\mathbb F_2\curvearrowright (X_K,\mu_\kappa)$ such that $\mathcal R(\mathbb F_2\curvearrowright X_K)\leq \cR_K,$ almost everywhere. If $H(K,\kappa)<\beta(\cR)$, then no such action exists. 

\item $\beta(\cR)$ is finite. In particular, if $(K,\kappa)$ is non-atomic, then there exists a free ergodic pmp action $\mathbb F_2\curvearrowright (X_K,\mu_\kappa)$ such that $\mathcal R(\mathbb F_2\curvearrowright X_K)\leq \cR_K,$ almost everywhere.

\item For any ergodic non-amenable subequivalence relation $\cS \le \cR$, $\beta(\cR) \le [\cR:\cS]^{-1}\beta(\cS)$. In particular, if $\cS$ has infinite index, then $\beta(\cR)=0$. 

\item For any non-null Borel set $Y \subset X$, $\beta(\cR) \le \mu(Y) \beta(\cR \resto Y)$. In particular, if $\cR$ has infinite fundamental group, then $\beta(\cR)=0$. 

\item If $\cR$ contains a normal ergodic subequivalence relation $\cS\vartriangleleft \cR$ with infinite index, then $\beta(\cR)=0$. 

\end{enumerate}
\end{thm}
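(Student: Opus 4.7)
\emph{Proof proposal.}
Define
\[
\beta(\cR) := \inf\{H(K,\kappa) : \cR_K \text{ contains the orbits of a free ergodic pmp } \mathbb{F}_2\text{-action}\},
\]
which is well-defined as a function of the entropy by Theorem~\ref{thm:isomorphism}. The crux of \textbf{part~(1)} is monotonicity: if $H(L,\lambda) > H(L',\lambda')$ and $\cR_{L'}$ contains $\mathbb{F}_2$-orbits, so does $\cR_L$. Since Bernoulli extensions compose, I can write $\cR_L \cong (\cR_{L'})_M$ for an appropriate $M$, making $\cR_L$ a class-bijective extension of $\cR_{L'}$. Given a free ergodic $\mathbb{F}_2 \curvearrowright X_{L'}$ with orbits in $\cR_{L'}$, the lift of $\mathcal{R}(\mathbb{F}_2 \curvearrowright X_{L'})$ into $\cR_L$ is, by Theorem~\ref{thm:subeq}, the Bernoulli extension of $\mathcal{R}(\mathbb{F}_2 \curvearrowright X_{L'})$, and by Proposition~\ref{exam} this coincides with the orbit equivalence relation of the product of the original action with a Bernoulli shift of $\mathbb{F}_2$---still free, and ergodic because Bernoulli shifts of $\mathbb{F}_2$ are weakly mixing. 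Upward-closedness of the set of admissible entropies follows, yielding part~(1).

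For \textbf{part~(2)}, I would revisit the proof of Theorem~\ref{main}: the percolation argument of Sections~4--6 only requires the Bernoulli extension with base $(\{0,1\}^n, \lambda_p^n)$, where $n$ is chosen via Lemma~\ref{spec} so that $\|T\| \leq n$ and $p \in \bigl(\tfrac{1}{2n-\|T\|+1}, \tfrac{1}{\|T\|}\bigr)$. This base has finite Shannon entropy $n H(\{0,1\}, \lambda_p)$, giving $\beta(\cR) < \infty$. The non-atomic case is then immediate from part~(1), since non-atomic bases have entropy $+\infty$.

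\textbf{Parts~(3)} and~\textbf{(4)} follow directly from Theorems~\ref{thm:subeq} and~\ref{thm:compression}. In (3), the lift $\tilde{\cS}$ of $\cS$ into $\cR_K$ is the Bernoulli extension of $\cS$ with entropy $H(K,\kappa)[\cR:\cS]$; when this exceeds $\beta(\cS)$ (finite by part~(2)), $\tilde{\cS}$---and hence $\cR_K$---contains $\mathbb{F}_2$-orbits, so $\beta(\cR) \leq [\cR:\cS]^{-1}\beta(\cS)$. In (4), $\cR_K \resto \tilde{Y}$ is the Bernoulli extension of $\cR\resto Y$ with entropy $H(K,\kappa)/\mu(Y)$; when this exceeds $\beta(\cR\resto Y)$, there is an ergodic subrelation of $\cR_K \resto \tilde{Y}$ of normalized cost $2$ (the free $\mathbb{F}_2$-orbit equivalence), and by Proposition~13 of~\cite{GL07} combined with Theorem~\ref{hj}, the full ergodic $\cR_K$ contains free ergodic $\mathbb{F}_2$-orbits, giving $\beta(\cR) \leq \mu(Y)\beta(\cR\resto Y)$. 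The ``in particular'' statements then follow: infinite index immediately forces $\beta(\cR)=0$, and any non-trivial $r \in (0,1)$ in the fundamental group of $\cR$ (supplied by infiniteness of the latter) yields $\beta(\cR) \leq r\,\beta(\cR)$, forcing $\beta(\cR)=0$.

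\textbf{Part~(5)} is the most delicate step. If $\cS$ is non-amenable, part~(3) immediately gives $\beta(\cR)=0$. When $\cS$ is amenable, it is hyperfinite by~\cite{CFW81} with fundamental group $\mathbb{R}_{>0}$; the plan is to propagate this to $\cR$ and then invoke part~(4). For each $r \in (0,1)$, I would fix $Y$ with $\mu(Y)=r$ and an isomorphism $\sigma\colon (X,\cS) \to (Y,\cS\resto Y)$, then use the countable family $\{\theta_n\} \subset [\cR]$ witnessing normality---together with the infinite coset decomposition supplied by infinite index---to transport $\sigma$ to an isomorphism $\cR \to \cR\resto Y$, thereby placing $r$ in the fundamental group of $\cR$. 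The hard part is making this transport consistent with the cocycle describing $\cR$ as a groupoid extension of $\cS$ by its countable quotient: infinite index is what provides enough room in $Y$ to accommodate the transported normalizers. If this direct route proves obstructed, a backup plan is to exploit non-amenability of the quotient $\cR/\cS$ (forced because extensions of amenable by amenable are amenable) in order to produce an intermediate non-amenable ergodic subrelation $\cS \leq \mathcal{T} \leq \cR$ of infinite index, and then conclude by part~(3).
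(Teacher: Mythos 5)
Your treatment of parts (1)--(4) tracks the paper's argument closely and is essentially correct. A couple of small things to tighten: in (1), you implicitly use that composing Bernoulli extensions gives a Bernoulli extension (the paper does this via Theorem~\ref{thm:isomorphism} applied to a product base, then invokes Theorem~\ref{thm:subeq}); in (2), the $n$ automorphisms coming from Lemma~\ref{spec} need not generate an \emph{ergodic} subequivalence relation, so an ergodic generator must be thrown in (the paper adds $\theta_0$ ergodic, uses $n+1$ generators, and verifies the resulting $T$ still has small norm). These are fixable.

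Part (5) is where there is a genuine gap, and both of your proposed routes are problematic. Your primary plan is to show that whenever $\cR$ has an amenable ergodic normal subrelation $\cN$ of infinite index, the fundamental group of $\cR$ contains all of $(0,1)$, and then invoke part~(4). But this is strictly stronger than what needs to be proved, and it is false in general: e.g.\ take a free ergodic pmp action of $\mathbb Z^n \rtimes SL_n(\mathbb Z)$ (so $\cN$ is the $\mathbb Z^n$-subrelation, amenable, ergodic, normal, infinite index), yet the ambient equivalence relation has countable fundamental group. So transporting the isomorphism $\sigma$ consistently with the cocycle genuinely cannot work. Your backup plan --- produce an intermediate non-amenable ergodic $\cS \le \mathcal T \le \cR$ of infinite index and apply part~(3) --- also fails, because this would require a non-amenable subgroup of infinite index inside the countable group $\cR/\cS$, and such a subgroup need not exist (consider Tarski monsters and similar simple non-amenable quotients whose proper subgroups are all finite). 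The paper sidesteps both obstructions entirely. The actual mechanism is: by Kesten's theorem, non-amenability of $\cR/\cN$ gives $\theta_1,\dots,\theta_n \in \cR/\cN$ with $\|\frac1n\sum\theta_i\|<1/4$; for each $m$, pick a finite subequivalence relation $\cF\le\cN$ with $m$-element classes, and lift the $\theta_i$ to $[\cR/\cF]$. The crucial point (Lemma~\ref{lem:3.2}) is that the operator norm of the lift is \emph{still} bounded by that in $\cR/\cN$, so $\alpha(\cR/\cF)\le\log n$ \emph{uniformly in $m$}. Since $\cR/\cF\cong\cR\resto Y_m$ with $\mu(Y_m)=1/m$ (Lemma~\ref{lem:3.4}), Propositions~\ref{lem:2} and \ref{lem:4} give $\beta(\cR)\le(\log n + C)/m\to 0$. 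It is this uniformity of the spectral gap under passing to $\cR/\cF$ --- not the fundamental group or a hypothetical intermediate subrelation --- that drives the argument. You should also note that the correct case split is on amenability of $\cR/\cN$, not of $\cN$: if $\cR/\cN$ is amenable, then $\cN$ must be non-amenable and part~(3) applies; if $\cR/\cN$ is non-amenable, the Kesten argument above applies.
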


Recall that the {\it fundamental group} of $\cR$ denotes  the set of all quotients $\mu(Y_1)/\mu(Y_2)\in (0,\infty)$, where $Y_1, Y_2\subset X$ are non-null Borel subsets such that $\cR \resto Y_1\cong \cR \resto Y_2$. For the definition of normality for subequivalence relations $\cS\le\cR$, see \cite{FSZ89}.

\subsection{Proof of Theorem \ref{thm:other}} We begin by defining $\beta(\mathcal R)$ and showing that it is finite.

\begin{definition}
We define $\beta(\cR) \in [0,\infty]$ to be the infimum of all numbers of the form $H(K,\kappa)$ where $(K,\kappa)$ is a probability space satisfying:  there exist a free ergodic pmp action $\mathbb F_2\curvearrowright (X_K, \mu_\kappa)$ such that $\mathcal R(\mathbb F_2\curvearrowright X_K)\leq \cR_K,$ almost everywhere.
\end{definition}

\begin{prop}\label{lem:1}
If $(L,\lambda)$ is {\em any} probability space with $H(L,\lambda)>\beta(\cR)$, then there exists a free ergodic pmp action $\mathbb F_2\curvearrowright (X_L, \mu_\lambda)$ such that $\mathcal R(\mathbb F_2\curvearrowright X_L)\leq \cR_L,$ almost everywhere.
\end{prop}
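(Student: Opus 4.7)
The plan is to bootstrap: use the definition of $\beta(\cR)$ to obtain an $\mathbb F_2$-subrelation inside some $\cR_K$ with $H(K,\kappa)$ strictly less than $H(L,\lambda)$, and then transport this to $\cR_L$ by realizing $\cR_L$ as an iterated Bernoulli extension $(\cR_K)_M$.

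First, by the definition of $\beta(\cR)$ as an infimum, I would pick $(K,\kappa)$ with $\beta(\cR)\le H(K,\kappa)<H(L,\lambda)$ together with a free ergodic pmp action $\mathbb F_2\curvearrowright(X_K,\mu_\kappa)$ satisfying $\cS:=\cR(\mathbb F_2\curvearrowright X_K)\le\cR_K$ a.e., and let $(M,\nu)$ be any probability space with $H(M,\nu)=H(L,\lambda)-H(K,\kappa)>0$. A direct computation, based on the bijection $[(x,\omega_K)]_{\cR_K}\leftrightarrow[x]_{\cR}$ (which holds because $\cR_K$-equivalence fixes the $\omega_K$ coordinate), yields a canonical isomorphism of extensions of $\cR$, namely $(\cR_K)_M\cong\cR_{K\times M}$; combined with Theorem \ref{thm:isomorphism} this identifies $\cR_L$ with $(\cR_K)_M$. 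Now I would apply Theorem \ref{thm:subeq} to the Bernoulli extension $(\cR_K)_M\to\cR_K$ and the ergodic subrelation $\cS\le\cR_K$: the lift $\tcS$ of $\cS$ to $\cR_L$ is isomorphic, as an extension of $\cS$, to the Bernoulli extension $\cS_N$ of $\cS$ over any $(N,\rho)$ with $H(N,\rho)=H(M,\nu)[\cR_K:\cS]>0$ (taking $(N,\rho)$ non-atomic when this entropy is infinite).

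Since $\cS=\cR(\mathbb F_2\curvearrowright X_K)$ arises from a free action, Proposition \ref{exam} identifies $\cS_N$ with the orbit equivalence relation of the diagonal action $\mathbb F_2\curvearrowright X_K\times N^{\mathbb F_2}$, where $\mathbb F_2$ acts via the given action on $X_K$ and by the Bernoulli shift on $N^{\mathbb F_2}$. This diagonal action is free because its restriction to the $X_K$ factor already is, and ergodic because the Bernoulli shift is weakly mixing while the factor on $X_K$ is ergodic. Composing the measure-preserving identifications $X_L\cong(X_K)_M\cong X_K\times N^{\mathbb F_2}$ and pulling back the diagonal action then produces a free ergodic pmp action $\mathbb F_2\curvearrowright(X_L,\mu_\lambda)$ whose orbit equivalence relation equals $\tcS\le\cR_L$, as required. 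The main point requiring care is verifying the composition formula $(\cR_K)_M\cong\cR_{K\times M}$ as class-bijective extensions of $\cR$ and checking that this isomorphism is compatible with the lift construction of Theorem \ref{thm:subeq}; once these bookkeeping items are in place, all freeness and ergodicity claims reduce to standard facts about Bernoulli shifts.
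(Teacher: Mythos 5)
Your proof is correct and follows essentially the same route as the paper's: choose $(K,\kappa)$ with entropy strictly between $\beta(\cR)$ and $H(L,\lambda)$, identify $\cR_L$ with the iterated Bernoulli extension $(\cR_K)_M$ via Theorem~\ref{thm:isomorphism}, apply Theorem~\ref{thm:subeq} to see that the lift $\tilde{\cS}$ of $\cS$ is a Bernoulli extension of $\cS$, and conclude ergodicity. The only difference is cosmetic: you deduce ergodicity of the lift by explicitly realizing $\cS_N$ via Proposition~\ref{exam} as the orbit relation of a diagonal action and invoking weak mixing of the Bernoulli shift, whereas the paper cites Lemma~\ref{ergodic} directly (Bernoulli extensions of ergodic relations are ergodic).
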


\begin{proof}
By hypothesis, there exists a probability space $(K,\kappa)$ with $H(K,\kappa)<H(L,\lambda)$ and a free ergodic pmp action $\mathbb F_2\curvearrowright (X_K, \mu_\kappa)$ such that $\mathcal S:=\mathcal R(\mathbb F_2\curvearrowright X_K)\leq \cR_K,$ almost everywhere. Let $(N,\eta)$ be a probability space such that $H(N,\eta) = H(L,\lambda) - H(K,\kappa)$. The Shannon entropy of $(N\times K,\eta \times \kappa)$ equals the Shannon entropy of $(L,\lambda)$. Theorem \ref{thm:isomorphism} implies that the extension $\cR_{L} \to \cR$ is isomorphic to $\cR_{N\times K} \to \cR$. The latter extension has $\cR_{K}$ as an intermediate factor. 

Next, we lift the action $\mathbb F_2\curvearrowright X_K$ to a free pmp action  $\mathbb F_2\curvearrowright X_{N\times K}$ so that $\tilde{\mathcal S}:=\mathcal R(\mathbb F_2\curvearrowright X_{N\times K}$) is the lift of $\mathcal S$ through the extension $\mathcal R_{N\times K}\to\mathcal R_{K}$. Since the extension $\mathcal R_{N\times K}\to\cR_{K}$ is isomorphic to the Bernoulli extension $(\mathcal R_{K})_{N}\to\mathcal R_K$,  Theorem \ref{thm:subeq} implies that the extension $\tilde{\mathcal S}\to\mathcal S$ is isomorphic to a Bernoulli extension. Since $\mathcal S$ is ergodic, Lemma \ref{ergodic} implies that $\tilde{\mathcal S}$ is ergodic, hence the action $\mathbb F_2\curvearrowright X_{N\times K}$ is ergodic. Since $\mathcal R_L\cong\mathcal R_{N\times K}$ by Theorem \ref{thm:isomorphism}, we are done. 
\end{proof}


Next, we obtain a nontrivial upper bound on $\beta$. Define $\alpha(\cR)=\log(n)$, where $n\ge 3$ is the smallest natural number  such that there exist $\theta_1,\ldots, \theta_n \in [\cR]$ with
$$\left\| \frac{1}{n} \sum_{i=1}^n u(\theta_i) \right\| < 1/4.$$

\begin{prop}\label{lem:2}
There is a universal constant $C>0$ such that
$$\beta(\cR) \le \alpha(\cR)+C.$$
In particular, $\beta(\cR)$ is finite.
\end{prop}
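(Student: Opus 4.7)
The plan is to use the spectral bound defining $\alpha(\cR)$ to set up the percolation machinery of Sections 4 and 5 on a minimal ergodic subequivalence relation $\cS\le\cR$, and then transfer the resulting ergodic cost-$>1$ subequivalence relation to a Bernoulli extension of $\cR$ of controlled entropy via Theorem \ref{thm:subeq}.

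First, fix an $n\ge 3$ realizing $\alpha(\cR)=\log n$ and $\theta_1,\ldots,\theta_n\in[\cR]$ with $\|n^{-1}\sum_{i=1}^n u(\theta_i)\|<1/4$. Using \cite[Theorem 3.5]{Ke10} I pick an ergodic $\theta_0\in[\cR]$, and let $\cS\le\cR$ be the subequivalence relation generated by $\theta_0,\theta_1,\ldots,\theta_n$. Then $\cS$ is ergodic (since $\theta_0$ is) and non-amenable (since $\theta_1,\ldots,\theta_n\in[\cS]$ realize the same strict spectral bound on $L^2(\cS,m_\cS)\subset L^2(\cR,m_\cR)$). Setting $n'=n+1$ and $T=\sum_{i=0}^n(u(\theta_i)+u(\theta_i^{-1}))$, the triangle inequality yields $\|T\|\le 2+n/2=(n+4)/2<n'$ for $n\ge 3$, so $p:=1/n'$ lies in the interval $\big(\tfrac{1}{2n'-\|T\|+1},\tfrac{1}{\|T\|}\big)$.

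Next, let $N=[\cR:\cS]\in\N\cup\{\infty\}$ and choose $(L,\lambda)$ so that $H(L,\lambda)=n'H(p)/N$ when $N<\infty$, respectively any small positive value when $N=\infty$. By Theorem \ref{thm:subeq} combined with Theorem \ref{thm:isomorphism}, the lift of $\cS$ to $\cR_L$ is, as an extension of $\cS$, either precisely the Bernoulli extension of $\cS$ with base $(\{0,1\}^{n'},\lambda_p^{n'})$ (when $N<\infty$) or a further Bernoulli extension of it (when $N=\infty$). In either case, applying Theorem \ref{percolation}, Theorem \ref{ergo} and Proposition \ref{transient} to $\cS$ with the graphing $\{\theta_0,\ldots,\theta_n\}$ and parameter $p$, and lifting through the additional Bernoulli extension when $N=\infty$, I obtain an ergodic subequivalence relation $\mathcal E\subset\cR_L$ supported on a set $U^\infty\subset X_L$ of positive $\mu_L$-measure, with normalized cost $c>1$.

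The last step upgrades $\mathcal E$, which lives only on $U^\infty$, to a free ergodic pmp action of $\mathbb F_2$ on all of $X_L$ with orbits in $\cR_L$. Since $\cR_L$ is ergodic by Lemma \ref{ergodic} and $\mathcal E\subset\cR_L\resto U^\infty$ is ergodic with normalized cost $c>1$, the combined Hjorth--Kechris-Miller--Pichot technology (Theorem \ref{hj} together with the results from \cite{KM04, Pi05} invoked at the end of the proof outline of Theorem \ref{main}) produces the desired action in $\cR_L$, giving $\beta(\cR)\le H(L,\lambda)\le n'H(p)$. A direct computation with $p=1/n'$, using $\log(1+t)\le t$, yields
\[
 n'H(1/n')=\log n'+(n'-1)\log\tfrac{n'}{n'-1}\le\log n'+1\le \log n+1+\log 2,
\]
so $\beta(\cR)\le \alpha(\cR)+C$ with universal constant $C=1+\log 2$.

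The main obstacle is the final extension step: the percolation output is a cost-$>1$ ergodic subequivalence relation living only on the infinite-cluster locus $U^\infty$, whereas Theorem \ref{hj} alone would only produce a free $\mathbb F_2$-action on $U^\infty$. Using ergodicity of $\cR_L$ to ``glue'' the action across $X_L\setminus U^\infty$ is the content of the Kechris-Miller--Pichot machinery referenced in the proof outline of Theorem \ref{main}; once this is invoked as a black box, the entropy bookkeeping is purely computational.
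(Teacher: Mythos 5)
Your argument follows the same blueprint as the paper's proof: use spectral gap to produce an ergodic subequivalence relation $\mathcal S\le\mathcal R$ with a good graphing, run the percolation machinery (Theorem~\ref{percolation}, Theorem~\ref{ergo}, Proposition~\ref{transient}) on the Bernoulli extension of $\mathcal S$ with base $(\{0,1\}^{n+1},\lambda_p^{n+1})$, and pass to a free ergodic $\mathbb F_2$-action via cost. Your choice $p=1/(n+1)$ instead of $1/(n+2)$ and the sharper bound $\|T\|<(n+4)/2$ are harmless variants; both satisfy the interval condition needed for Theorem~\ref{percolation}, and your constant $C=1+\log 2$ is just as good as the paper's $1+\log(5/3)$.

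Where you genuinely diverge is the transfer back to $\mathcal R$. The paper runs percolation on $(\mathcal R_0)_K$ (the Bernoulli extension of the \emph{subrelation}), then writes ``$\tilde{\mathcal R}\cong\mathcal R_K$'' and jumps directly to $\beta(\mathcal R)\le H(K,\kappa)$ — which, read literally, is a conflation of $(\mathcal R_0)_K$ with $\mathcal R_K$ and implicitly uses a transfer in the style of Proposition~\ref{lem:3}. You instead tune $(L,\lambda)$ so that by Theorem~\ref{thm:subeq} the lift of $\mathcal S$ to $\mathcal R_L$ is literally the Bernoulli extension of $\mathcal S$ used in the percolation argument. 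This makes the transfer explicit and is, if anything, more careful than what is printed in the paper.

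Two points need tightening. First, your final step misattributes the ``gluing'' from $U^\infty$ to $X_L$. It is not ``the content of the Kechris--Miller--Pichot machinery'': what actually happens is that one uses ergodicity of $\mathcal R_L$ to extend $\mathcal E$ to an ergodic subequivalence relation $\mathcal T\le\mathcal R_L$ on all of $X_L$ with $\mathcal T\resto U^\infty=\mathcal E$, then invokes Gaboriau's induction formula \cite[Proposition~II.6~(2)]{Ga99} to get $\mathrm{cost}(\mathcal T)\in(1,\infty)$, and only then applies Theorem~\ref{hj} (which already packages Hjorth and \cite{KM04,Pi05}). Second, the $N=\infty$ case is asserted rather than argued: ``lifting through the additional Bernoulli extension'' does not obviously preserve the normalized cost $>1$ for the cluster equivalence relation, and a direct cost-transfer lemma is not available in the paper. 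The clean way to dispose of $N=\infty$ is to note that the percolation argument already gives $\beta(\mathcal S)\le H(K,\kappa)<\infty$, after which Proposition~\ref{lem:3} (whose proof does not depend on the present statement) yields $\beta(\mathcal R)\le\beta(\mathcal S)[\mathcal R:\mathcal S]^{-1}=0$; alternatively one can lift a finished $\mathbb F_2$-action through a further Bernoulli extension exactly as in the proof of Proposition~\ref{lem:1}, rather than trying to lift a cost estimate.
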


\begin{proof}
By Lemma \ref{spec}, non-amenability of $\cR$ implies $\alpha(\cR)$ is finite. Let $n\ge 3$ with $\log(n)=\alpha(\cR)$. Let $\theta_0 \in [\cR]$ be ergodic and $\theta_1,\ldots, \theta_n \in [\cR]$ such that
$\left\| \frac{1}{n} \sum_{i=1}^n u(\theta_i) \right\| < 1/4.$
Then
$$\left\| \frac{1}{n+1} \sum_{i=0}^n u(\theta_i) \right\| < 1/2$$
and the subequivalence relation $\cR_0$ generated by $\theta_0,\ldots, \theta_n$ is ergodic. 

Let 
$T= \sum_{i=0}^n (u(\theta_i) + u(\theta_i^{-1})).$
Then $\|T\|< n+1$. Let $p=1/(n+2)$. Note that
\begin{equation}\label{manyends}\frac{1}{2(n+1) - \|T\| +1} < p < \frac{1}{\|T\|}.\end{equation}

Consider the notation from \ref{setting}, for the ergodic equivalence relation $\mathcal R_0$ and its generating graphing $\theta_0,...,\theta_n$ (instead of $\mathcal R$ and $\theta_1,...,\theta_n$) and for the parameter $p$ defined above.

By inequality \ref{manyends}, Theorem \ref{percolation} implies that $\omega$ has infinitely many infinite clusters, for almost every $(x,\omega)\in\tilde X$.
Let ${\tilde{\mathcal R}}_{\text{cl}}\subset{\tilde{\mathcal R}}$ be the cluster  equivalence relation and $U^{\infty}\subset\tilde X$ as defined in the beginning of Section \ref{erg}.
By combining Theorem \ref{ergo} and Proposition \ref{transient} we conclude that ${\tilde{\mathcal R}}_{\text{cl}}\resto{U^{\infty}}\subset{\tilde{\mathcal R}}\resto{U^{\infty}}$ is ergodic and has normalized cost $>1$.
Moreover,  the cost of ${{\tilde{\mathcal R}}_{\text{cl}}\resto{U^{\infty}}}\subset{\tilde{\mathcal R}}\resto{U^{\infty}}$ is clearly finite. 

By Lemma \ref{ber}, $\tilde{\mathcal R}$ is isomorphic to the Bernoulli extension with base space $(K,\kappa):=(\{0,1\}^{n+1},\lambda_p^{n+1})$.
In particular, since $\mathcal R$ is ergodic, Lemma \ref{ergodic} gives that $\tilde{\mathcal R}$ is ergodic.
Therefore, we can find an ergodic subequivalence relation $\mathcal S\subset\tilde{\mathcal R}$ whose restriction to $U^{\infty}$ coincides with  ${\tilde{\mathcal R}}_{\text{cl}}\resto{U^{\infty}}$. Then the induction formula \cite[Proposition II.6 (2)]{Ga99} implies that $\mathcal S$ has cost in $(1,\infty)$. 

By applying Theorem \ref{hj} to $\mathcal S$, it follows that there exists a free ergodic pmp action $\mathbb F_2\curvearrowright (\tilde X,\tilde\mu)$ such that $\mathcal S_0:=\mathcal R(\mathbb F_2\curvearrowright\tilde X)\leq\mathcal S$. 
In particular, $\mathcal S_0\leq\tilde{\mathcal R}\cong\mathcal R_K$.
Thus, we deduce that
\begin{eqnarray*}
\beta(\cR) &\le& H(K,\kappa) =-(n+1)(p\log(p)+(1-p)\log(1-p))\\
&=& (n+1) \frac{\log(n+2)}{n+2} - (n+1)^2 \frac{\log(1-1/(n+2)) }{n+2 }\\
&\le& \log(n+2) + 1\le \log(n) + C = \alpha(\cR)+C,
\end{eqnarray*}
where $C = 1 + \log(5/3)$. 
\end{proof}

\begin{prop}\label{lem:3}
If $\cS\le\cR$ is an ergodic non-amenable 
subequivalence relation, then we have $\beta(\cR) \le \beta(\cS) [\cR:\cS]^{-1}.$
\end{prop}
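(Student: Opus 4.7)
The plan is to apply Theorem \ref{thm:subeq} in tandem with Proposition \ref{lem:1}: a Bernoulli extension of $\cR$ with small base entropy restricts, on its $\cS$-lift, to a Bernoulli extension of $\cS$ whose base entropy is multiplied by $[\cR:\cS]$. This amplification lets us trade knowledge of $\beta(\cS)$ for knowledge of $\beta(\cR)$.

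Set $N=[\cR:\cS]\in\N\cup\{\infty\}$ and fix $\delta>0$. First I would choose a probability space $(K,\kappa)$ as follows: if $N<\infty$, pick $(K,\kappa)$ with $H(K,\kappa)=(\beta(\cS)+\delta)/N$; if $N=\infty$, pick any $(K,\kappa)$ with $0<H(K,\kappa)\le \delta$. This is always possible because every value in $(0,\infty]$ is the Shannon entropy of some probability space, and $\beta(\cS)<\infty$ by Proposition \ref{lem:2}. In either case one has $H(K,\kappa)\cdot N>\beta(\cS)$ (using the convention $\delta\cdot \infty=\infty$ in the second case).

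Now form $\cR_K$ and let $\tcS\le \cR_K$ be the lift of $\cS$. By Theorem \ref{thm:subeq}, the extension $\tcS\to\cS$ is isomorphic to the Bernoulli extension $\cS_L\to\cS$ with $H(L,\lambda)=H(K,\kappa)\cdot N>\beta(\cS)$. Applying Proposition \ref{lem:1} to $\cS$ produces a free ergodic pmp action $\F_2\curvearrowright (X_L^{\cS},\mu_\lambda^{\cS})$ whose orbit equivalence relation is contained in $\cS_L$. Transporting along the isomorphism of extensions — which is a measure-preserving bijection between the underlying spaces carrying $\cS_L$ to $\tcS$, and preserves freeness and ergodicity — yields a free ergodic pmp action $\F_2\curvearrowright (X_K,\mu_\kappa)$ with $\cR(\F_2\curvearrowright X_K)\le \tcS\le \cR_K$. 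By definition of $\beta(\cR)$ this gives $\beta(\cR)\le H(K,\kappa)$.

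Letting $\delta\to 0$ yields $\beta(\cR)\le \beta(\cS)/N$ when $N<\infty$ and $\beta(\cR)=0=\beta(\cS)/N$ when $N=\infty$, which is the claimed inequality. The argument is essentially a direct concatenation of the Bernoulli-extension machinery already developed; the only mild subtlety is the infinite-index case, where one must observe that even an arbitrarily small positive base entropy on $(K,\kappa)$ induces a Bernoulli extension of $\cS$ of infinite entropy, which therefore automatically exceeds $\beta(\cS)$.
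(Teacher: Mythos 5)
Your proof is correct and follows essentially the same route as the paper: fix a base space with entropy exceeding $\beta(\cS)[\cR:\cS]^{-1}$, apply Theorem \ref{thm:subeq} to identify the lift $\tcS\to\cS$ with a Bernoulli extension of $\cS$ of entropy exceeding $\beta(\cS)$, produce an $\F_2$-action inside $\tcS\le\cR_K$, and take the infimum. You are slightly more explicit in invoking Proposition \ref{lem:1} (rather than the bare definition of $\beta$) to produce the $\F_2$-action at the precise target entropy, and in spelling out the infinite-index case, but the substance is identical.
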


\begin{proof}
Let $(K,\kappa)$ be a probability space with $H(K,\kappa)>\beta(\cS)[\cR:\cS]^{-1}$. By Theorem \ref{thm:subeq}, if $\tcS$ is the lift of $\cS$ to $\cR_K$ then $\tcS \to \cS$ is isomorphic to the Bernoulli extension of $\cS$ with base entropy $H(K,\kappa)[\cR:\cS] > \beta(\cS)$. By the definition of $\beta$, there is a free ergodic pmp action $\mathbb F_2\curvearrowright (X_K,\mu_{\kappa})$ whose orbits are contained in $\tcS$. Since $\tcS\le \cR_K$, these orbits are also contained in $\cR_K$. Therefore, $\beta(\cR) \le H(K,\kappa)$, and the inequality follows by taking the infimum over all such $H(K,\kappa)$. 
\end{proof}

\begin{prop}\label{lem:4}
Let $Y \subset X$ be a non-null Borel set. Then $\beta(\cR) \le \beta(\cR \resto Y) \mu(Y).$
\end{prop}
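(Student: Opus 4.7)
The plan is to parallel Proposition \ref{lem:3}, using Theorem \ref{thm:compression} in place of Theorem \ref{thm:subeq}. First, fix a probability space $(K,\kappa)$ with $H(K,\kappa) > \mu(Y)\,\beta(\cR\resto Y)$, and let $\tY \subset X_K$ be the lift of $Y$. Theorem \ref{thm:compression} then identifies the extension $\cR_K \resto \tY \to \cR\resto Y$ with the Bernoulli extension of $\cR\resto Y$ whose base entropy is $H(K,\kappa)/\mu(Y) > \beta(\cR\resto Y)$. By the definition of $\beta(\cR\resto Y)$ this yields a free ergodic pmp action of $\F_2$ on $\tY$, endowed with its normalized probability measure, whose orbit equivalence relation $\cS_0$ satisfies $\cS_0 \le \cR_K \resto \tY$.

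The main task is then to upgrade this $\F_2$-action on $\tY$ to a free ergodic pmp action on all of $(X_K,\mu_\kappa)$ with orbits in $\cR_K$. The approach is to build an ergodic subequivalence relation $\tilde \cS_0 \le \cR_K$ with $\tilde \cS_0 \resto \tY = \cS_0$ and with finite cost, and then invoke Theorem \ref{hj}. Since $\cR_K$ is ergodic (Lemma \ref{ergodic}), one can pick countably many $\phi_i \in [[\cR_K]]$ whose domains are pairwise disjoint subsets of $\tY$, whose ranges are pairwise disjoint subsets of $X_K \setminus \tY$, and whose ranges together exhaust $X_K \setminus \tY$ up to null sets. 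Let $\tilde \cS_0$ be generated by $\cS_0$ together with these $\phi_i$'s. The disjointness of the ranges, and the fact that they avoid $\tY$, force any $\tilde \cS_0$-path between two points of $\tY$ to reduce, after cancelling matching pairs $\phi_i$ and $\phi_i^{-1}$, to a path of $\cS_0$-steps; hence $\tilde \cS_0 \resto \tY = \cS_0$. Ergodicity of $\tilde \cS_0$ follows from ergodicity of $\cS_0$ on $\tY$ together with the fact that every point of $X_K \setminus \tY$ is $\tilde \cS_0$-related to a point of $\tY$ via some $\phi_i^{-1}$. Gaboriau's induction formula \cite[Proposition II.6]{Ga99} then gives
\[
\mathrm{cost}(\tilde \cS_0) = 1 + \mu(Y)\bigl(\mathrm{cost}(\cS_0) - 1\bigr) = 1 + \mu(Y) \in (1,\infty),
\]
since $\cS_0$, being the orbit equivalence relation of a free pmp $\F_2$-action, has cost $2$. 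Applying Theorem \ref{hj} to $\tilde \cS_0$ produces a free ergodic pmp action of $\F_2$ on $(X_K,\mu_\kappa)$ whose orbits are contained in $\tilde \cS_0 \le \cR_K$, so $\beta(\cR) \le H(K,\kappa)$. Taking the infimum over admissible $(K,\kappa)$ yields the stated inequality.

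The hard part is this passage from $\tY$ back to $X_K$. A careless extension of $\cS_0$, for example by adjoining an arbitrary ergodic element of $[\cR_K]$, typically enlarges $\cS_0$ upon restriction to $\tY$, which in turn could drop $\mathrm{cost}(\tilde \cS_0 \resto \tY)$ below $2$ and pull $\mathrm{cost}(\tilde \cS_0)$ down to $1$, defeating Theorem \ref{hj}. The careful choice of $\phi_i$'s, with domains inside $\tY$ and ranges that are pairwise disjoint and disjoint from $\tY$, is precisely what guarantees $\tilde \cS_0 \resto \tY = \cS_0$ and hence the bound $\mathrm{cost}(\tilde \cS_0) = 1 + \mu(Y) > 1$.
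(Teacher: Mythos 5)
Your proof follows the same route as the paper's: use Theorem \ref{thm:compression} to realize $\cR_K\resto\tY\to\cR\resto Y$ as a Bernoulli extension with base entropy $>\beta(\cR\resto Y)$, obtain a free ergodic pmp $\F_2$-action on $\tY$ with orbits inside $\cR_K\resto\tY$, extend this to an ergodic $\mathcal T\le\cR_K$ with $\mathcal T\resto\tY=\cS_0$ and cost in $(1,\infty)$, and invoke Theorem \ref{hj}. The paper merely asserts the existence of such a $\mathcal T$ (citing Gaboriau), whereas you give an explicit construction, which is a welcome addition.

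However, your construction as stated has a genuine flaw: you require the $\phi_i$ to have pairwise disjoint domains contained in $\tY$ \emph{and} pairwise disjoint ranges that exhaust $X_K\setminus\tY$. Since each $\phi_i\in[[\cR_K]]$ is measure-preserving, $\sum_i\tilde\mu(\dom\phi_i)=\sum_i\tilde\mu(\operatorname{ran}\phi_i)=1-\mu(Y)$, while pairwise disjoint domains in $\tY$ force $\sum_i\tilde\mu(\dom\phi_i)\le\mu(Y)$. This is impossible whenever $\mu(Y)<1/2$. Fortunately, as your own path-reduction argument makes clear, domain disjointness is never used: what is needed is only that the ranges are pairwise disjoint, lie in $X_K\setminus\tY$, and exhaust it, with domains merely contained in $\tY$. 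This weaker requirement is always achievable: by ergodicity of $\cR_K$ there is a Borel map $\psi:X_K\setminus\tY\to\tY$ with $\psi(x)\in[x]_{\cR_K}$; partition $X_K\setminus\tY$ into countably many Borel sets $R_i$ on which $\psi$ is injective (Lusin–Novikov) and set $\phi_i:=(\psi|_{R_i})^{-1}$. With this correction everything else you wrote — the reduction of $\tilde\cS_0$-paths to $\cS_0$-paths via cancellation of $\phi_i,\phi_i^{-1}$ pairs, the ergodicity of $\tilde\cS_0$, and the cost computation $\operatorname{cost}(\tilde\cS_0)=1+\mu(Y)(\operatorname{cost}(\cS_0)-1)=1+\mu(Y)\in(1,\infty)$ via \cite[Proposition II.6]{Ga99} — goes through, and the proof is complete.
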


\begin{proof}
Let $(K,\kappa)$ be a probability space and suppose $H(K,\kappa)>\beta(\cR \resto Y) \mu(Y)$. By Theorem \ref{thm:compression}, if $\tY$ is the lift of $Y$ to $X_K$, then $\cR_K\resto \tY \to \cR \resto Y$ is isomorphic to the Bernoulli extension of $\cR\resto Y$ with base entropy $H(K,\kappa)\mu(Y)^{-1} > \beta(\cR\resto Y)$. So by the definition of $\beta$, there is a free ergodic pmp action $\mathbb F_2\curvearrowright\tY$ such that $\cS=\mathcal R(\mathbb F_2\curvearrowright\tY)$ satisfies $\mathcal S\leq\cR_K \resto \tY$, almost everywhere. 

Since $\mathcal R_K$ is ergodic by Lemma \ref{ergodic}, we can find an ergodic subequivalence equivalence relation $\mathcal T\leq\cR_K$  such that $\mathcal T\resto\tY=\mathcal S$. 
Then \cite[Theorem IV.15]{Ga99} and \cite[Proposition II.6 (2)]{Ga99} together imply that the cost of $\mathcal T$ belongs to $(1,+\infty)$. 
Theorem \ref{hj} further implies that $\mathcal T$ and thus $\mathcal R_K$ contains almost every orbit of a free ergodic pmp action $\mathbb F_2\curvearrowright X_K$. 
 Therefore, $\beta(\cR) \le H(K,\kappa)$, and the conclusion follows by taking the infimum over all such $H(K,\kappa)$. 
\end{proof}


\begin{prop}\label{thm:5}
If $\cR$ contains an ergodic normal subequivalence relation $\cN\vartriangleleft\cR$ such that $\cR/\cN$ is non-amenable, then $\beta(\cR)=0$.
\end{prop}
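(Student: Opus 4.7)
The plan is to produce an ergodic non-amenable subequivalence relation $\cS\le\cR$ of infinite index, and then invoke Proposition \ref{lem:3} together with the finiteness of $\beta(\cS)$ from Proposition \ref{lem:2} to conclude
\[
\beta(\cR)\le\beta(\cS)\,[\cR:\cS]^{-1}=0.
\]
As a preliminary observation, $[\cR:\cN]$ must be infinite: otherwise every $\cR/\cN$-class would be finite, forcing $\cR/\cN$ to be hyperfinite and hence amenable, contrary to the hypothesis. In particular, if $\cN$ itself happens to be non-amenable, then $\cS:=\cN$ already satisfies all the requirements, and the proof is complete.

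The interesting case is when $\cN$ is amenable, where the non-amenability of $\cS$ must be imported from the quotient via the normality $\cN\vartriangleleft\cR$. Applying Lemma \ref{spec} inside $\cR/\cN$ produces finitely many elements $\bar\theta_1,\dots,\bar\theta_m\in[\cR/\cN]$ with $\|m^{-1}\sum u(\bar\theta_i)\|<1$. Using normality, each $\bar\theta_i$ can be lifted to an element $\theta_i\in[\cR]$ that normalizes $\cN$ and descends to $\bar\theta_i$ on the quotient. Setting $\cS:=\langle\cN,\theta_1,\dots,\theta_m\rangle$, I obtain a subequivalence relation that is ergodic (since it contains the ergodic $\cN$) and non-amenable (since its quotient $\cS/\cN=\langle\bar\theta_1,\dots,\bar\theta_m\rangle$ is non-amenable, and amenability of equivalence relations passes to quotients).

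Arranging $[\cR:\cS]=[\cR/\cN:\cS/\cN]=\infty$ is the crux. I would carry this out by applying Lemma \ref{spec} not to $\cR/\cN$ itself but to a compression $(\cR/\cN)\resto B$ for a Borel set $B$ of sufficiently small positive measure; this compression is again ergodic and non-amenable, and yields spectral-gap generators supported inside $B$. Adjoining one ergodic element of $[\cR/\cN]$ restores ergodicity on the full quotient space. A cost estimate based on Gaboriau's induction formula (valid whenever $\operatorname{cost}(\cR/\cN)>1$) then shows that for $B$ of sufficiently small measure the resulting subequivalence relation of $\cR/\cN$ has infinite index; its preimage in $\cR$ is the desired $\cS$.

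The main obstacle is precisely this infinite-index requirement. A direct application of Lemma \ref{spec} to $\cR/\cN$ may well produce a finitely generated non-amenable subequivalence relation of finite, or even full, index, which renders Proposition \ref{lem:3} useless. The compression trick --- forcing the spectral-gap generators to live inside a small Borel set while restoring ergodicity with a single extra generator --- is what simultaneously secures non-amenability, ergodicity, and infinite index. The borderline case where $\operatorname{cost}(\cR/\cN)=1$ (which can occur even for non-amenable equivalence relations) will require a separate argument that does not rely on Gaboriau's formula directly.
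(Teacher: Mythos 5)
Your proposal has a genuine gap. In the framework of \cite{FSZ89} used by the paper, the quotient $\cR/\cN$ is a \emph{countable group}, not an equivalence relation, so there is no meaning to ``a compression $(\cR/\cN)\resto B$ for a Borel set $B$'' or to ``$[\cR/\cN]$''; the whole compression trick you describe for the case $\cN$ amenable is incoherent as stated. Moreover, even if one reinterprets the step as ``find an infinite-index subgroup $G_0\le\cR/\cN$ that is non-amenable,'' this can fail outright: non-amenable Tarski monsters have every proper subgroup finite, so no infinite-index non-amenable subgroup exists, and your construction of $\cS$ (a lift of $G_0$ together with $\cN$) produces nothing usable. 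You also acknowledge a separate unresolved gap in the cost-one case. Only your first case ($\cN$ itself non-amenable, take $\cS=\cN$) is sound; it handles what the paper covers in Theorem~\ref{thm:other}(5) via Proposition~\ref{lem:3}, but does not yield Proposition~\ref{thm:5} when $\cN$ is amenable.

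The paper's proof is organized around a different idea: rather than hunting for an infinite-index non-amenable $\cS\le\cR$ (which need not exist), it compresses $\cR$ itself. By Kesten's theorem, the group $\cR/\cN$ has $\theta_1,\dots,\theta_n$ with $\|\tfrac1n\sum\lambda(\theta_i)\|<1/4$. For each $m$, one picks a finite subequivalence relation $\cF\le\cN$ with all classes of size $m$; then $\cN/\cF\vartriangleleft\cR/\cF$ and $(\cR/\cF)/(\cN/\cF)\cong\cR/\cN$ (Lemma~\ref{lem:3.3}), and $\cR/\cF\cong\cR\resto Y_m$ with $\mu(Y_m)=1/m$ (Lemma~\ref{lem:3.4}). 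The crucial technical input is Lemma~\ref{lem:3.2}, which lifts the spectral-gap elements of $\cR/\cN$ into $[\cR/\cF]$ while preserving the norm bound, giving $\alpha(\cR\resto Y_m)\le\log n$ \emph{uniformly in} $m$. Combining Proposition~\ref{lem:2} ($\beta\le\alpha+C$) with the compression formula of Proposition~\ref{lem:4} gives $\beta(\cR)\le\mu(Y_m)\beta(\cR\resto Y_m)\le(\log n+C)/m\to0$. You would need something like this lifting lemma, and the compression formula for $\cR$ (not for the quotient group), to make the argument go through.
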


Recall from \cite{FSZ89} that there exists a countable group, denoted $\cR/\cN$, and a cocycle $c:\cR \to \cR/\cN$, such that $\cN$ is the kernel of $c$. Moreover, for any $\theta \in \cR/\cN$ there is an element $\ttheta \in [\cR]$ such that $c(\ttheta x,x) = \theta$ for a.e. $x$. The element $\ttheta$ is called a {\em lift} of $\theta$. These are all the facts we will need about normal subequivalence relations. We will prove Proposition \ref{thm:5} by lifting an appropriate set of elements from $\cR/\cN$ and using the bound in Proposition \ref{lem:2}. 

\begin{lem}\label{lem:3.2}
Let $\theta_1,\ldots, \theta_n \in \cR/\cN$ and let $\ttheta_1,\ldots, \ttheta_n \in [\cR]$ be lifts. Then
$$\left\| \frac{1}{n} \sum_{i=1}^n u(\ttheta_i) \right\| \le \left\|\frac{1}{n} \sum_{i=1}^n \lambda( \theta_i) \right\|$$
where $\lambda: \cR/\cN \to U(\ell^2(\cR/\cN))$ is the left-regular representation.
\end{lem}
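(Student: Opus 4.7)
The plan is to use the cocycle $c:\cR\to G:=\cR/\cN$ provided by \cite{FSZ89} to decompose $L^2(\cR,m)$ into blocks indexed by $G$, and to show that each $u(\ttheta_i)$ permutes these blocks exactly like $\lambda(\theta_i)$ permutes the canonical basis of $\ell^2(G)$. The desired operator norm inequality then follows from a triangle-inequality argument comparing the vector-valued sum to its scalar counterpart.

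\textbf{Setup and key computation.} For each $g\in G$, set $A_g:=\{(x,y)\in\cR\mid c(x,y)=g\}$ and $\cH_g:=L^2(A_g,m)$, so that $L^2(\cR,m)=\bigoplus_{g\in G}\cH_g$ orthogonally. I claim that $u(\ttheta_i)$ restricts to a unitary $V_{i,g}\colon \cH_g\to\cH_{\theta_i g}$ for every $g\in G$. Indeed, substituting $y=\ttheta_i^{-1}x$ in the lift relation $c(\ttheta_i y,y)=\theta_i$ yields $c(x,\ttheta_i^{-1}x)=\theta_i$, hence $c(\ttheta_i^{-1}x,x)=\theta_i^{-1}$ by the cocycle identity $c(u,v)c(v,u)=e$. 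Thus
$$c(\ttheta_i^{-1}x,y)=c(\ttheta_i^{-1}x,x)\,c(x,y)=\theta_i^{-1}c(x,y).$$
Since $(u(\ttheta_i)f)(x,y)=f(\ttheta_i^{-1}x,y)$ is supported where $c(\ttheta_i^{-1}x,y)=g$, i.e.\ where $c(x,y)=\theta_i g$, this shows $u(\ttheta_i)(\cH_g)\subseteq\cH_{\theta_i g}$; unitarity of $u(\ttheta_i)$ together with orthogonality of the blocks then forces each restriction $V_{i,g}$ to be a unitary.

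\textbf{Norm comparison.} Write $T:=\frac{1}{n}\sum_i u(\ttheta_i)$ and $S:=\frac{1}{n}\sum_i \lambda(\theta_i)$. For $f=(f_g)_{g\in G}\in\bigoplus_g \cH_g$, the $h$-th component of $Tf$ equals $(Tf)_h=\frac{1}{n}\sum_i V_{i,\theta_i^{-1}h}\bigl(f_{\theta_i^{-1}h}\bigr)$, so the triangle inequality combined with $\|V_{i,g}f_g\|=\|f_g\|$ yields $\|(Tf)_h\|\le\frac{1}{n}\sum_i\|f_{\theta_i^{-1}h}\|$. Define $\xi\in\ell^2(G)$ by $\xi(g):=\|f_g\|$, so that $\xi\ge 0$ and $\|\xi\|=\|f\|$. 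Squaring and summing over $h$ gives
$$\|Tf\|^2\le\sum_{h\in G}\left(\frac{1}{n}\sum_{i=1}^n\xi(\theta_i^{-1}h)\right)^2=\|S\xi\|^2\le\|S\|^2\|\xi\|^2=\|S\|^2\|f\|^2,$$
where the middle equality crucially uses $\xi\ge 0$ to identify $\bigl(\frac{1}{n}\sum_i\xi(\theta_i^{-1}h)\bigr)^2$ with $|(S\xi)(h)|^2$. Taking the supremum over $f$ yields $\|T\|\le\|S\|$.

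\textbf{Main obstacle.} There is no serious difficulty; the only subtle point is the block-permutation identification above, which depends entirely on the cocycle identity and the defining property $c(\ttheta_i y,y)=\theta_i$ of a lift. All subsequent estimates are standard applications of the triangle inequality and the scalar-valued domination $\xi(g):=\|f_g\|$.
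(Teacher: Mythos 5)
Your proof is correct, and it takes a genuinely different route from the paper's. The paper works with the canonical trace vector ${\bf 1}_{\Delta}\in L^2(\mathcal R,m)$ and $\delta_e\in\ell^2(\cR/\cN)$: it shows that $\big\langle u(\ttheta_i){\bf 1}_\Delta,{\bf 1}_\Delta\big\rangle=\mu(\{x\mid\ttheta_i x=x\})\leq\delta_{\theta_i,e}=\langle\lambda(\theta_i)\delta_e,\delta_e\rangle$, notes that lifts are closed under products and inverses so the same domination propagates to all words $\ttheta_{i_1}^{\pm1}\cdots\ttheta_{i_k}^{\pm1}$, and then invokes the spectral-radius identity $\|T\|=\lim_m\big(\langle (T^*T)^m{\bf 1}_\Delta,{\bf 1}_\Delta\rangle\big)^{1/2m}$ (valid because ${\bf 1}_\Delta$ is a separating trace vector for $L(\cR)$) together with the Cauchy--Schwarz bound on the $\ell^2(\cR/\cN)$ side. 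Your argument instead decomposes $L^2(\cR,m)$ into the cocycle fibers $\cH_g=L^2(\{c=g\},m)$, shows that each $u(\ttheta_i)$ is a block unitary permuting the fibers precisely as $\lambda(\theta_i)$ permutes the canonical basis of $\ell^2(\cR/\cN)$, and closes by passing to the positive scalar section $\xi(g)=\|f_g\|$. What the paper's route buys is that it only needs to control a sequence of inner products against a single fixed vector, sidestepping any structure theory of the cocycle beyond the two identities used; what your route buys is conceptual transparency (the operator really does ``look like'' the regular representation block-by-block), it avoids the spectral-radius machinery and the implicit appeal to ${\bf 1}_\Delta$ being separating, and it is essentially self-contained given the cocycle identity and the defining property of a lift. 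Both proofs are valid.
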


\begin{proof}
Let $\Delta=\{(x,x)|x\in X\}$ and view ${\bf 1}_{\Delta}\in L^2(\mathcal R,m)$.
Let $\delta_e\in\ell^2(\cR/\cN)$ denote the Dirac function at the identity $e\in\cR/\cN$. Then we have 

$$\left\langle \left(\frac{1}{n} \sum_{i=1}^n u(\ttheta_i)\right) {\bf 1}_{\Delta},{\bf 1}_{\Delta}\right\rangle=\frac{1}{n}\sum_{i=1}^n\mu(\{x\in X|\ttheta_i(x)=x\})\leq\frac{1}{n}\sum_{i=1}\delta_{\theta_i,e}=\left\langle\left(\frac{1}{n}\sum_{i=1}^nu(\theta_i)\right)\delta_e,\delta_e\right\rangle.$$

The conclusion follows immediately by combining this inequality with the following three facts:
\begin{itemize}
\item If $\ttheta_1,\ttheta_2\in [\mathcal R]$ are lifts of $\theta_1,\theta_2\in\cR/\cN$, then $\ttheta_1^{-1}$ is a lift of $\theta_1^{-1}$, and $\ttheta_1\ttheta_2$ is a lift of $\theta_1\theta_2$.
\item $\|T\|=\lim\limits_{m\rightarrow\infty}\Big(\big\langle(T^*T)^m {\bf 1}_{\Delta},{\bf 1}_{\Delta}\big\rangle\Big)^{\frac{1}{2m}}$, for every $T\in L(\mathcal R)$.
\item $\Big(\big\langle(T^*T)^m \delta_e,\delta_e\big\rangle\Big)^{\frac{1}{2m}}\leq \|T\|$, for every $T\in L(\mathcal R/\mathcal N)$ and all $m\geq 1$.
\end{itemize}
\end{proof}

Let $\cF \le \cR$ be a finite subequivalence relation. 
We denote by $X/\cF$ the quotient space and by $\cR/\cF$ the quotient equivalence relation on $X/\cF$. 
More precisely, the elements of $X/\cF$ are the $\cF$-classes of $X$. 
Let $\pi:X \to X/\cF$  be the natural projection map and endow $X/\cF$ with
 the push forward measure $\mu_\cF:=\pi_*\mu$. Note that $( [x]_\cF, [y]_\cF) \in \cR/\cF$ if and only if $x\cR y$. 

We leave the proof of the following easy lemmas as exercises.

\begin{lem}\label{lem:3.3}
If $\cN\vartriangleleft \cR$ is a normal subequivalence relation and $\cF \le \cN$ is a finite subequivalence relation, then $\cN/\cF$ is normal in $\cR/\cF$. Moreover $\cR/\cN \cong (\cR/\cF)/(\cN/\cF)$.
\end{lem}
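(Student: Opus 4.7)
The plan is to produce a cocycle on the quotient $\cR/\cF$ by descending the cocycle $c:\cR \to \cR/\cN$ coming from the normality of $\cN$ in $\cR$, and to verify that it witnesses both the normality of $\cN/\cF$ in $\cR/\cF$ and the group isomorphism $\cR/\cN \cong (\cR/\cF)/(\cN/\cF)$.

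First I would define $\bar c : \cR/\cF \to \cR/\cN$ by $\bar c([x]_\cF, [y]_\cF) := c(x,y)$ for any representatives $x,y$. Well-definedness follows from $\cF \le \cN = \ker c$ together with the cocycle identity $c(x', y') = c(x',x)\, c(x,y)\, c(y, y')$: when $(x, x'), (y, y') \in \cF$ the outer factors are trivial, so $c(x', y') = c(x, y)$. The cocycle identity for $\bar c$ on $\cR/\cF$ is then inherited from that of $c$. Next I compute the kernel: $([x]_\cF,[y]_\cF) \in \ker\bar c$ iff $c(x,y) = e$ iff $(x,y) \in \cN$ iff $([x]_\cF,[y]_\cF) \in \cN/\cF$. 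Since $\bar c$ is surjective (every $\theta \in \cR/\cN$ is realized by some lift $\tilde\theta\in[\cR]$), this exhibits $\cN/\cF$ as the kernel of a surjective cocycle on $\cR/\cF$ taking values in the countable group $\cR/\cN$, which simultaneously establishes the normality of $\cN/\cF$ in $\cR/\cF$ and the isomorphism in the second statement.

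The one step that requires care is the verification that every $\theta\in \cR/\cN$ admits a lift in $[\cR/\cF]$, i.e., an element $\bar\theta \in [\cR/\cF]$ satisfying $\bar c(\bar\theta[x]_\cF,[x]_\cF) = \theta$ for $\mu_\cF$-almost every $[x]_\cF$. I expect this to be the main (though still routine) technicality: starting from a lift $\tilde\theta\in[\cR]$ of $\theta$ through $c$, the cocycle identity yields $c(\tilde\theta x,\tilde\theta y) = \theta\, c(x,y)\, \theta^{-1}$, so $\tilde\theta$ preserves the partition of each $\cR$-class into $\cN$-classes, but it need not preserve $\cF$, and hence does not descend to $X/\cF$ in an obvious way. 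I would produce $\bar\theta$ by applying the Lusin--Novikov measurable uniformization theorem to the Borel subset $\{([x]_\cF,[y]_\cF)\in \cR/\cF : \bar c([y]_\cF,[x]_\cF) = \theta\}$, whose vertical and horizontal sections are countable and almost surely non-empty (by the existence of $\tilde\theta$). Finally, a standard Feldman--Moore style decomposition promotes the resulting Borel section into a Borel automorphism of $X/\cF$, which automatically preserves $\mu_\cF$ since $\cR/\cF$ is a pmp equivalence relation.
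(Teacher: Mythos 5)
The paper leaves this lemma (and Lemma~\ref{lem:3.4}) as an exercise, so there is no paper proof to compare against; I will assess your argument on its own terms.

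Your cocycle descent $\bar c([x]_\cF,[y]_\cF)=c(x,y)$ is correct: well-definedness from $\cF\le\cN=\Ker c$ and the cocycle identity, the cocycle property of $\bar c$, and the kernel computation $\Ker\bar c=\cN/\cF$ all check out, and this is plainly the right framework given that the paper only uses the cocycle characterization of normality. You also correctly identify the lift construction as the one nontrivial step. However, the proposed resolution has a genuine gap: Lusin--Novikov applied to $E_\theta=\{([x]_\cF,[y]_\cF)\in\cR/\cF:\bar c([y]_\cF,[x]_\cF)=\theta\}$ produces a Borel \emph{selector} $s:X/\cF\to X/\cF$, but $s$ will generally be many-to-one --- its level sets are exactly $\cN/\cF$-classes, since $s([x]_\cF)=s([x']_\cF)$ forces $\tilde\theta x\, \cN\, \tilde\theta x'$ and hence $x\,\cN\, x'$ --- and a ``Feldman--Moore style decomposition'' does not convert a selector into a Borel automorphism. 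What is actually needed is a Borel perfect matching inside $E_\theta$: on each matched pair of $\cN/\cF$-classes (the class of $[x]_\cF$ and the class of $[\tilde\theta x]_\cF$) the set $E_\theta$ is complete bipartite, so one must choose, Borel-measurably and globally, a bijection within each such pair. This is routine when those classes are countably infinite (e.g. when $\cN$ is ergodic, which is the situation in Proposition~\ref{thm:5}), or when every $\cF$-class has the same size $m$ (also the situation there), by measurably enumerating each $\cN/\cF$-class and matching index to index; but for a general finite $\cF\le\cN$ the two $\cN/\cF$-classes can have different (finite) cardinalities, in which case no such $\bar\theta\in[\cR/\cF]$ exists and the cocycle-with-lifts route breaks down. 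So either make the ergodicity/uniform-size hypothesis explicit (it holds where the lemma is used) or argue the matching more carefully. Finally, you are implicitly invoking the converse of the facts the paper quotes from \cite{FSZ89} --- that a cocycle onto a countable group whose kernel is $\cN/\cF$ and for which every group element admits a lift in $[\cR/\cF]$ \emph{witnesses} normality with the stated quotient; that converse should be stated or cited rather than left implicit.
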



\begin{lem}\label{lem:3.4}
There exists a Borel set $Y \subset X$  such that every $\cF$-class contains exactly one element of $Y$. Moreover $\cR \resto Y \cong \cR/\cF$. 
If each $\cF$ class contains exactly $m\in \N$ elements, then $\mu(Y)=1/m$.
\end{lem}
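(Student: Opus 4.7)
The plan is to construct $Y$ as a Borel transversal of $\cF$ via the standard ranking trick, then identify $\cR \resto Y$ with $\cR/\cF$ through the quotient map and compute the measure via a decomposition of $X$ into $m$ transversals.

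First I would fix a Borel linear order $<$ on $X$ (available since $X$ is a standard Borel space) and set $Y := \{x \in X \mid x = \min_< [x]_\cF\}$. Because $\cF$ has finite classes, every class has a $<$-minimum, so $Y$ meets each $\cF$-class in exactly one point. The set $Y$ is Borel: writing $\cF = \bigcup_n \mathrm{graph}(\varphi_n)$ for countably many Borel maps $\varphi_n$ (via Feldman--Moore or Lusin--Novikov), $x \in Y$ iff $x \le \varphi_n(x)$ for all $n$. The restricted quotient map $\pi\!\upharpoonright_Y \colon Y \to X/\cF$ is then a Borel bijection, and by construction $(y,y') \in \cR \resto Y$ iff $y \cR y'$ iff $([y]_\cF, [y']_\cF) \in \cR/\cF$, giving a Borel isomorphism of equivalence relations.

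Next, assume each $\cF$-class has exactly $m$ elements. Using the same linear order, for $1 \le i \le m$ let $Y_i$ be the set of points which are the $i$-th smallest in their $\cF$-class, so $Y_1 = Y$ and $X = \bigsqcup_{i=1}^m Y_i$, each $Y_i$ Borel. For each $i$, the assignment $y \mapsto (\text{the }i\text{-th smallest element of }[y]_\cF)$ defines a Borel isomorphism $\sigma_i \colon Y \to Y_i$ whose graph lies inside $\cF$, i.e.\ $\sigma_i \in [[\cF]]$. Since $\cF$ is pmp, each $\sigma_i$ is measure-preserving, so $\mu(Y_i) = \mu(Y)$ for every $i$, and hence $1 = \mu(X) = m\,\mu(Y)$, giving $\mu(Y) = 1/m$.

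Finally, to confirm that the isomorphism $\cR \resto Y \cong \cR/\cF$ is measure-preserving in the sense of Section 2, one checks that for any Borel $A \subset Y$ the $\cF$-saturation of $A$ is $\bigsqcup_{i=1}^m \sigma_i(A)$, so $\mu_\cF(\pi(A)) = \mu(\pi^{-1}(\pi(A))) = m\,\mu(A) = \mu(A)/\mu(Y)$, which is exactly the probability of $A$ with respect to the normalized measure on $Y$. The only non-obvious step is the existence of a Borel transversal, which is handled cleanly by the minimum-in-class construction; everything else is bookkeeping using the finiteness of classes and measure preservation of $[[\cF]]$.
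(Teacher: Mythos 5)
The paper explicitly leaves this lemma (together with Lemma \ref{lem:3.3}) as an exercise, so there is no in-paper proof to compare yours against. Your argument is correct and is the standard one: the Borel linear-order minimum gives a transversal, Lusin--Novikov (or Feldman--Moore) gives a countable Borel graph cover of $\cF$ from which Borel-ness of $Y$ follows, the restricted quotient map $\pi\!\upharpoonright_Y$ conjugates $\cR \resto Y$ to $\cR/\cF$ by the definition of $\cR/\cF$, and the decomposition $X = \bigsqcup_{i=1}^m \sigma_i(Y)$ with $\sigma_i \in [[\cF]]$ (hence measure-preserving, since $\cF \le \cR$ is pmp) gives $\mu(Y) = 1/m$ and the measure-preservation of $\pi\!\upharpoonright_Y$ in the normalized sense. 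One small remark for precision: your final measure-preservation check relies on the uniform class size $m$, so it establishes the ``$\cong$'' as a measure-preserving isomorphism only under the hypothesis of the last sentence; if $\cF$-class sizes are non-constant, $\pi\!\upharpoonright_Y$ is still a Borel isomorphism of the relations, but need not match the normalized measure on $Y$ with $\pi_*\mu$ on $X/\cF$. This is consistent with how the lemma is invoked in the proof of Proposition \ref{thm:5}, where $\cF$ is chosen with exactly $m$ elements in each class.
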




\begin{proof}[Proof of Proposition \ref{thm:5}]

By Kesten's Theorem \cite{Ke59} non-amenability of the group $\cR/\cN$ implies the existence of elements $\theta_1,\ldots, \theta_n \in \cR/\cN$ with $n\ge 3$ such that
$$\left\| \frac{1}{n} \sum_{i=1}^n \theta_i \right\| < 1/4.$$

Let $m>1$ be a natural number. Let $\cF\le \cN$ be a finite subequivalence relation such that every $\cF$-class contains $m$ elements. By Lemma \ref{lem:3.3}, $\cR/\cN \cong (\cR/\cF)/(\cN/\cF)$. So there exist elements $\theta'_1,\ldots, \theta'_n \in (\cR/\cF)/(\cN/\cF)$ such that
$$\left\| \frac{1}{n} \sum_{i=1}^n \theta'_i \right\| < 1/4.$$
By Lemma \ref{lem:3.2} we get that $\alpha(\cR/\cF) \le \log(n)$. Lemma \ref{lem:3.4} implies that $\alpha(\cR\resto Y_m) \le 
\log(n)$, where $Y_m \subset X$ is any Borel subset with $\mu(Y_m)=1/m$. By Propositions \ref{lem:2} and \ref{lem:4}, 
$$\beta(\cR) \le \beta(\cR \resto Y_m)/m \le \log(n)/m + C/m$$
where $C>0$ is a universal constant. Taking $m \to \infty$, we obtain $\beta(\cR)=0$.
\end{proof}

By collecting the above results, we are now ready to prove Theorem \ref{thm:other}.

\begin{proof}[Proof of Theorem \ref{thm:other}]
Items (1-4) are proven in Propositions \ref{lem:1}, \ref{lem:2}, \ref{lem:3}, \ref{lem:4} respectively. To prove item (5), suppose $\cN\le \cR$ is ergodic and normal, and $\cR/\cN$ is infinite. If $\cR/\cN$ is amenable, then since $\cR$ is non-amenable, $\cN$ must also be non-amenable. In this case, the conclusion follows from Proposition \ref{lem:3}. On the other hand, if $\cR/\cN$ is non-amenable, the conclusion follows from Proposition \ref{thm:5}.
\end{proof}

\subsection{Proof of Corollary \ref{cor}} If $\mathcal{R}$ is non-amenable, it admits such an extension by Theorem \ref{main}. On the other hand, if $\mathcal{R}$ is amenable, it is hyperfinite by \cite{CFW81}. Any extension of $\mathcal{R}$ is then also hyperfinite, since the lift as in Remark \ref{R: lift} of a finite subequivalence relation remains finite. 
Thus if $\mathcal{R}$ is amenable, no extension of $\mathcal{R}$ can contain the orbit equivalence relation of a free ergodic pmp action of $\mathbb{F}_2$, as the latter is non-amenable.


 \section{Uncountably many ergodic extensions of nonamenable $\mathcal{R}$}
 
 The goal of this section is to prove Theorem \ref{T: ext}. To this end, we will make use of I. Epstein's co-induction construction \cite{Ep07}. 
 
 \subsection{Co-induced Equivalence Relation}
 
\begin{Hoff_Commands}

Let $\Gamma_0 \on{\beta} (X, \mu)$ be a free ergodic pmp action and $\R$ an ergodic pmp equivalence relation on $(X, \mu)$ such that $\R_0 = \R(\Gamma_0 \on{\beta} X) \leq \R$. 
Since $\R$ is ergodic, there is $N_0 \in \ZZ_{> 0} \cup \{\infty\}$ such that $[x]_\R$ contains exactly $N_0$ $\R_0$-classes for almost every $x \in X$. Let $N = [0, N_0) \cap \ZZ$.

Then for any pmp action $\Gamma_0 \on{\alpha} (Y, \nu)$, there is a pmp countable equivalence relation $\R_\alpha = {\rm CInd}_\beta^{\R}(\alpha)$ on $(X \times Y^N, \mu \times \nu^N)$ called the \emph{coinduced equivalence relation}, whose construction we will briefly recall (see also \cite[Section 3]{IKT08}). 

Let $\{C_j\}_{j \in N} \subset [\R]$ with $C_0 = \id$ and such that for almost every $x \in X$, the sequence $\{C_j(x)\}_{j \in N}$ contains exactly one member of each $\R_0$-class contained in $[x]_\R$. These are called {\it choice functions} (see \cite[Lemmas 1.1 and 1.3]{FSZ89} for proof of their existence). 
For almost every $x \in X$, this gives us a way to number the $\R_0$-classes contained in $[x]_\R$. If $(x, x') \in \R$, then $x'$ will give rise to a new numbering of the $\R_0$-classes in $[x']_\R = [x]_\R$ and hence a permutation $\pi(x, x') \in S_N$ defined by
\begin{align}
n = \pi(x, x')(k)  \iff  [C_n(x)]_{\R_0} = [C_k(x')]_{\R_0}
\end{align}
which satisfies $\pi(x, x')\pi(x', x'') = \pi(x, x'')$ for almost every $(x, x'), (x', x'') \in \R$. Since $\beta$ is free, we can then define $\deltab_{(x, x')} \in (\Gamma_0)^N$ by 
\begin{align}
C_{\pi(x, x')(k)}(x) = \deltab_{(x, x')}(k) \cdot C_{k}(x') \textt{for} k \in N.
\end{align}
For $\y \in Y^N$, let $y_n \in Y$ denote the $n$th component of $\y$. Then we can then define the co-induced equivalence relation $\R_\alpha$ on $(X \times Y^N, \mu \times \nu^N)$ by 
\begin{align*}
(x, \y) {\R_\alpha} (x', \y') \iff \left[x {\R} x' \text{ and } y_{\pi(x, x')(k)} = \deltab_{(x, x')}(k) \cdot y'_k \text{ for all $k \in N$}\right].
\end{align*}

Proposition \ref{P: cind props} below gives some important properties that this constructions satisfies. For clarity in its proof, we first isolate the following basic fact as a lemma:

\begin{lemma}\label{L: basic}
Let $\H_1$ and $\H_2$ be Hilbert spaces, $\H = \H_1 \ten \H_2$, $\{\xi_n\} \subset \H_1$, $\{\eta_n\} \subset \H_2$ such that $\sup_{n} \|\xi_n\| < \infty$ and $\eta_n \to 0$ weakly in $\H_2$. Then $\xi_n \ten \eta_n \to 0$ weakly in $\H$. 
\end{lemma}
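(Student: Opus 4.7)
The plan is to verify that $\langle \xi_n \ten \eta_n, v\rangle \to 0$ for every $v \in \mathcal{H}$, using a standard density-plus-uniform-boundedness argument. The key preliminary observation is that weakly convergent sequences in a Hilbert space are norm-bounded (uniform boundedness principle), so $M := \sup_n \|\eta_n\| < \infty$. Combined with the hypothesis $L := \sup_n \|\xi_n\| < \infty$, this gives
\[
\sup_n \|\xi_n \ten \eta_n\| = \sup_n \|\xi_n\|\,\|\eta_n\| \le LM < \infty.
\]

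First I would handle the case of elementary tensors $v = \alpha \ten \beta$ with $\alpha \in \mathcal{H}_1$, $\beta \in \mathcal{H}_2$. Here
\[
\langle \xi_n \ten \eta_n, \alpha \ten \beta \rangle = \langle \xi_n, \alpha\rangle\, \langle \eta_n, \beta\rangle.
\]
The factor $|\langle \xi_n, \alpha\rangle| \le L\|\alpha\|$ is bounded, while $\langle \eta_n, \beta\rangle \to 0$ by the weak convergence of $\eta_n$; so the product tends to $0$. By linearity this extends to the linear span $D$ of elementary tensors, which is dense in $\mathcal{H}$.

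Now for arbitrary $v \in \mathcal{H}$ and $\varepsilon > 0$, choose $w \in D$ with $\|v - w\| < \varepsilon/(2(LM + 1))$. Then
\[
|\langle \xi_n \ten \eta_n, v\rangle| \le |\langle \xi_n \ten \eta_n, w\rangle| + \|\xi_n \ten \eta_n\|\,\|v - w\| < |\langle \xi_n \ten \eta_n, w\rangle| + \varepsilon/2,
\]
and the first term is eventually $< \varepsilon/2$ by the elementary-tensor case. Hence $\langle \xi_n \ten \eta_n, v\rangle \to 0$, as required.

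There is really no obstacle here; the only subtle point worth stating explicitly is the invocation of uniform boundedness of the weakly convergent sequence $\{\eta_n\}$, which is what allows the density argument to close.
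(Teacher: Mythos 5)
Your proof is correct and takes essentially the same approach as the paper: invoke uniform boundedness to see $\sup_n\|\eta_n\|<\infty$, deduce that $\{\xi_n\otimes\eta_n\}$ is norm-bounded, and then reduce to checking weak convergence against the dense span of elementary tensors. The paper leaves the final density step implicit ("it is enough to check $\ldots$ for each $\xi\in\mathcal H_1$, $\eta\in\mathcal H_2$"), which you have simply written out in full.
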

\begin{proof}
Note that $\sup_{n} \|\eta_n\| < \infty$ by the uniform boundedness principle, so $\{\xi_n \ten \eta_n\}$ is bounded and it is enough to check that 
$|\<\xi_n \ten \eta_n, \xi \ten \eta\>| \le  \|\xi\| \cdot \sup_k \|\xi_k\|\cdot|\<\eta_n, \eta\>|  \to 0$ as $n \to \infty$ for each $\xi \in \H_1$, $\eta \in \H_2$. 
\end{proof}

\begin{proposition}\label{P: cind props}
Let $\Gamma_0 \on{\beta} (X, \mu)$ be a free ergodic pmp action and $\R$ an ergodic pmp equivalence relation on $(X, \mu)$ such that $\R(\Gamma_0 \on{\beta} X) \leq \R$. Then for any pmp action $\Gamma_0 \on{\alpha} (Y, \nu)$ with $\nu$ nonatomic, the coinduced equivalence relation $\R_\alpha = {\rm CInd}_\beta^{\R}(\alpha)$ satisfies:
\begin{enumerate}
\item $\R_\alpha$ is an extension of $\R$. 
\item If $\alpha$ is weakly mixing, then $\R_\alpha$ is ergodic. 
\item If $\alpha$ is free, then $\R_\alpha$ is an expansion of $\R(\Gamma_0 \on{\alpha} Y)$.
\end{enumerate}
\end{proposition}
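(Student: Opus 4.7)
The three parts are addressed separately, with (2) providing the main technical obstacle. Throughout, for $\theta\in[\R]$ the lift $\tilde\theta\in[\R_\alpha]$ is the $\mu\otimes\nu^N$-preserving map $(x,\y)\mapsto(\theta x,T_\theta(x)\y)$, where $T_\theta(x):Y^N\to Y^N$ is $(T_\theta(x)\y)_n=\deltab_{(x,\theta x)}(n)\cdot y_{\pi(x,\theta x)^{-1}(n)}$; this is a coordinate permutation composed with a product of $\Gamma_0$-translations, so it preserves $\nu^N$.

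For (1), I would take $p(x,\y)=x$. The product form of $\mu\otimes\nu^N$ yields condition (1), and for each $x'\in[x]_\R$ the defining relation of $\R_\alpha$ supplies an explicit $\y'$ with $(x,\y)\R_\alpha(x',\y')$, so $p$ surjects onto $\R$-classes. For injectivity on $\R_\alpha$-classes: if $(x,\y)\R_\alpha(x,\y')$, then because $\{C_j(x)\}_{j\in N}$ lies in pairwise distinct $\R_0$-classes, $\pi(x,x)$ must be the identity permutation; freeness of $\beta$ applied to $C_n(x)=\deltab_{(x,x)}(n)\cdot C_n(x)$ then forces $\deltab_{(x,x)}(n)=e$ for all $n$, whence $\y'=\y$. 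For (3), I would take $q(x,\y)=y_0$. Measure preservation is immediate. Since $C_0=\id$, one checks $\pi(x,\gamma x)(0)=0$ and $\deltab_{(x,\gamma x)}(0)=\gamma$ for every $\gamma\in\Gamma_0$, so $q(\widetilde{\beta(\gamma)}(x,\y))=\gamma\cdot y_0$, giving $q([(x,\y)]_{\R_\alpha})\supset[y_0]_{\R(\Gamma_0\cc Y)}$. For injectivity on $\R_\alpha$-classes almost everywhere: if $(x',\y')\in[(x,\y)]_{\R_\alpha}$ satisfies $y'_0=y_0$, set $k=\pi(x,x')^{-1}(0)$ and $\gamma=\deltab_{(x,x')}(0)$, so $y_0=\gamma y_k$. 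If $k=0$, freeness of $\alpha$ forces $\gamma=e$, and then $C_0=\id$ forces $x=x'$, so $\y=\y'$ by (1); if $k\ne 0$, then for each of the countably many admissible $x'$ this equation cuts out a $\nu^N$-null set of $\y$'s by non-atomicity of $\nu$ and independence of the coordinates.

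For (2), note first that for any $\R_\alpha$-invariant $F\in L^\infty(X\times Y^N)$, a change of variables via $T_\theta(x)$ together with the $\tilde\theta$-invariance of $F$ shows the fiber average $h(x):=\int F(x,\y)\,d\nu^N(\y)$ satisfies $h(\theta x)=h(x)$ for every $\theta\in[\R]$. By ergodicity of $\R$, $h$ is almost everywhere constant, and subtracting this constant reduces matters to showing $F=0$ whenever $h=0$. My plan for this reduction is to expand $F(x,\y)=\sum_\sigma F_\sigma(x)\,e_\sigma(\y)$ in the tensor basis $e_\sigma=\bigotimes_{n\in N}e_{\sigma(n)}$ of $L^2(Y^N)$ determined by an orthonormal basis $\{e_k\}$ of $L^2(Y)$ with $e_0=1$, and to translate $F\circ\tilde\theta=F$ into identities of the form $F_\eta(x)=\sum_\sigma F_\sigma(\theta x)\,A^\theta_{\sigma,\eta}(x)$ in which each coefficient $A^\theta_{\sigma,\eta}(x)$ is a product of matrix coefficients of the Koopman representation $\rho:\Gamma_0\to U(L^2(Y))$ evaluated on the entries of $\deltab_{(x,\theta x)}$, supported on $\{\sigma:\supp\sigma=\pi(x,\theta x)(\supp\eta)\}$. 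Weak mixing of $\alpha$ provides the vanishing of such matrix coefficients along suitable sequences in $\Gamma_0$, and applying the identity with $\theta=\beta(\gamma_k)$ for well-chosen $\gamma_k\in\Gamma_0$ and passing to an $L^2$-limit forces $F_\eta=0$ for every nonzero finitely supported $\eta$. Making this iteration work carefully---in the spirit of the classical proof that coinduction from a subgroup preserves weak mixing---is the technical heart of the argument.
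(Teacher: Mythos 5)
Your treatment of (1) and (3) follows essentially the same route as the paper: $p(x,\y)=x$ for (1), with injectivity on classes coming from $\pi(x,x)=\id$ (distinctness of the $\R_0$-classes through $\{C_j(x)\}$) and $\deltab_{(x,x)}\equiv e$ (freeness of $\beta$); and $q(x,\y)=y_0$ for (3), with the almost-everywhere injectivity handled by splitting on whether the relevant coordinate indices agree, invoking freeness of $\alpha$ in one case and non-atomicity of $\nu$ in the other. Both are fine, and your comparison-with-the-base-point variant of (3) is equivalent to the paper's since the ``bad'' set is carried into itself by the (countable) $\R_\alpha$-saturation.

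For (2), however, there is a genuine gap in the step you flag as the technical heart. Taking $\theta=\beta(\gamma)$ gives you control only over the $0$th coordinate of the cocycle: since $C_0=\id$ and $x\,\R_0\,\gamma x$, you get $\pi(x,\gamma x)(0)=0$ and $\deltab_{(x,\gamma x)}(0)=\gamma$, but for $n\neq 0$ the permutation $\pi(x,\gamma x)(n)$ and the cocycle entries $\deltab_{(x,\gamma x)}(n)$ are $x$-dependent elements of $\Gamma_0$ that you cannot prescribe. Consequently, your matrix-coefficient identity lets weak mixing of $\alpha$ kill only those Fourier coefficients $F_\eta$ with $\eta(0)\neq 0$; a coefficient supported entirely away from coordinate $0$ (which exists whenever $[\R:\R_0]>1$, i.e. precisely the interesting case) is untouched by any sequence $\beta(\gamma_k)$. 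The paper's essential device, which your plan does not identify, is to conjugate by the choice function: for a target index $k$ one sets $\theta_n=C_k^{-1}g_nC_k\in[\R]$, which forces $\pi(x,\theta_n x)(k)=k$ and $\deltab_{(x,\theta_n x)}(k)=g_n$ for a.e.\ $x$. Applying $\sigma_{\tilde\theta_n}$ to a simple tensor $\xi_\ii\otimes\eta_\ii$ with $i_k\neq 0$ then produces a tensor whose $k$th factor is exactly $\alpha_{g_n}(\eta_{i_k})$, and weak mixing of $\alpha$ combined with the elementary weak-convergence lemma for tensors shows $\langle 1_E,\sigma_{\tilde\theta_n}(\xi_\ii\otimes\eta_\ii)\rangle\to 0$, whence $\xi_\ii=0$. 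Your preliminary reduction via fiber averages (constancy of $h$) is correct and amounts to the same thing as showing $1_E\in L^2(X)\otimes\C$, but without the conjugation trick the reduction to zero of the remaining coefficients does not go through.
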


\begin{remark}
Assume that $\mathcal R$ is the orbit equivalence relation of some free pmp action $\Gamma\curvearrowright^{\sigma} (X,\mu)$ of a countable group $\Gamma$. Let $\Gamma\curvearrowright^{\tau} (X\times Y^N,\mu\times\nu^N)$ be the co-induced action of $\alpha$, modulo $(\beta,\sigma)$ (see \cite{Ep07} and also \cite[Section 3 (A)]{IKT08}, where this terminology is defined). Then $\mathcal R_{\alpha}$ is precisely the orbit equivalence relation of $\tau$.
In particular, if $\alpha$ is weakly mixing, then Proposition \ref{P: cind props} (2) implies that $\tau$ is ergodic.

This fact allows to simplify the proof of \cite[Lemma 2.6]{Ep07}. Indeed, in the context from \cite[Lemma 2.6]{Ep07}, it follows that the action $c$ of $\Gamma$ obtained by coinducing the weakly mixing action $a\times a_{\pi}$ of $\mathbb F_2$ modulo $(a_0,b_0)$ is ergodic, hence the use of the ergodic decomposition of $c$ is redundant.
\end{remark}

\begin{proof}{\rm (1)}.
Consider the measurable map $p: X \times Y^N \to X$ defined by $p(x, \y) = x$. Then $\mu = [\mu \times \nu^N] \circ p^{-1}$ and for $(x, \y) \in X \times Y^N$ we have $p([(x, \y)]_{\R_\alpha}) = [x]_\R$ injectively since $\pi_{(x, x)} = \id$, $\deltab_{(x, x)} = \id^N$. 

 {\rm (2)}.
Let $E \subset X \times Y^N$ be an $\R_\alpha$-invariant Borel subset and let $1_E$ denote the characteristic function viewed as an element of $L^2(X) \ten \bigotimes_{k \in N} L^2(Y) \cong L^2(X \times Y^N)$, where the tensor product is taken with respect to the reference vector $1 \in L^2(Y)$ in each component. Then $\sigma_{\theta}(1_E)=1_E$ for all $\theta\in [\mathcal R_{\alpha}]$, where we define $\sigma_{\theta}(\xi)=\xi\circ\theta^{-1}$ for $\xi\in L^2(X\times Y^N)$.
For $\theta\in[\mathcal R]$, let $\tilde\theta\in[\mathcal R_{\alpha}]$ be its {\it lift}, i.e. the unique element in $[\mathcal R_{\alpha}]$ such that $p\circ\tilde\theta=\theta\circ p$. 

Denote by $I \subset (\ZZ_{\ge 0})^N$ the subset consisting of $(i_j)_{j \in N}$ such that $i_j = 0$ for all but finitely many $j \in N$. 
Let $\{\eta_i\}_{i = 0}^\infty$ be an orthonormal basis of $L^2(Y)$ with $\eta_0 = 1$, and for $\ii = (i_j)_{j \in N} \in I$, let $\eta_\ii = \bigotimes_{j \in N} \eta_{i_j}$. 
Then expanding $1_E = \sum_{\ii \in I} \xi_\ii \ten \eta_\ii$ with $\xi_\ii \in L^2(X)$, we will show that $1_E \in L^2(X) \ten \CC$ by showing that $\xi_\ii = 0$ for any $\ii \in I$ which has $i_k \ne 0$ for some $k \in N$. This will finish the proof. Indeed, since $1_E\in L^2(X)\ten\CC$ is $\mathcal R_{\alpha}$-invariant, it follows that $1_E$ is $\mathcal R$-invariant. Since $\R$ is ergodic, this will then force $1_E \in \CC$, i.e., $\mu(E) \in \{0, 1\}$.

Fix such $\ii$ with $i_k \ne 0$. Since $\alpha$ is weakly mixing and $\eta_{i_k} \perp \CC$, there is a sequence $\{g_n\}_{n = 1}^\infty \subset \Gamma_0$ such that $\alpha_{g_n}(\eta_{i_k}) \to 0$ weakly in $L^2(Y)$ as $n \to \infty$. Let $\{C_j\}_{j = 0}^N \subset [\R]$ be the choice functions used to construct $\R_\alpha$, and for each $n \ge 1$, set $\theta_n = C_{k}^{-1}\circ g_n \circ C_k \in [\R]$. Then $C_k(x) = g_n \cdot C_k(\theta_n^{-1}x)$  
and so 
\begin{align*}
\pi_{(x, \theta_n^{-1} x)}(k) = k \quad \text{and} \quad
\deltab_{(x, \theta_n^{-1} x)}(k) = g_n \quad \text{for all} \quad n \ge 1.
\end{align*}

Hence $y'_k = g_n^{-1}y_k$ for $n \ge 1$ and $x, x' \in X$, $\y, \y' \in Y^N$ with
${\tilde{\theta}_n}^{-1} \cdot (x, \y) = (x', \y')$. 
Therefore defining
$\zeta_n \in L^2(X) \ten \bigotimes_{j \in N \setminus \{k\}} L^2(Y)$ 
by 
\begin{align*}
\zeta_n(x, y_0, \dots, \hat{y_k}, \dots) 
= \xi_\ii(\theta_n^{-1}x) \ten \bigotimes_{j \in N \setminus \{k\}} \eta_{i_j}(y'_j) \textt{where} (x', \y') = \tilde{\theta_n}^{-1} \cdot (x, \y)
\end{align*}
we have 
$\sigma_{\tilde\theta_n}(\xi_\ii \ten \eta_\ii) = \zeta_n \ten \alpha_{g_n}(\eta_{i_k})$ with $\|\zeta_n\| = \|\xi_\ii\|$. 
Then for any $n$,
\begin{align*}
\|\xi_\ii\|^2 
= \|\xi_\ii \ten \eta_\ii\|^2 
= \<1_E, \xi_\ii \ten \eta_\ii\> 
= \<\sigma_{\tilde\theta_n}^{-1}(1_E), \xi_\ii \ten \eta_\ii\>
= \<1_E, \sigma_{\tilde\theta_n}(\xi_\ii \ten \eta_\ii)\>
= \<1_E, \zeta_n \ten \alpha_{g_n}(\eta_{i_k})\>
\end{align*}
and $\<1_E, \zeta_n \ten \alpha_{g_n}(\eta_{i_k})\> \to 0$ as $n \to \infty$ by Lemma \ref{L: basic}, so we indeed have $\xi_\ii = 0$. 

{\rm (3)}.
Consider the surjection $p: X \times Y^N \to Y$ by $p(x, \y) = y_0$. Then $\nu = \mu \times \nu^N \circ p^{-1}$ and $p([(x, \y)]_{\R_\alpha}) \supset [p(x, \y)]_{\R(\Gamma_0 \on{\alpha} Y)}$. 
 
Take any $(x, \y) \in X \times Y^N$ and suppose that $(x', \y') \ne (x'', \y'')$ are members of $[(x, \y)]_{\R_\alpha}$ with $y_0' = y_0''$.
Then for $k = \pi(x, x')(0)$ and $m = \pi(x, x'')(0)$ we have 
\begin{align*}
y_{k} = \deltab_{(x, x')}(0)y_0' 
= \deltab_{(x, x')}(0)y_0'' 
= \deltab_{(x, x')}(0)\deltab_{(x, x'')}(0)^{-1}y_m.
\end{align*}
If $k = m$ and $g = \deltab_{(x, x')}(0)\deltab_{(x, x'')}(0)^{-1} = e$ (the identity of $\Gamma_0$), then 
\begin{align*}
x' = C_0(x') = \deltab_{(x, x')}(0)^{-1}C_k(x) = \deltab_{(x, x'')}(0)^{-1}C_k(x) = C_0(x'') = x''
\end{align*}
which would contradict $(x', \y') \ne (x'', \y'')$. On the other hand, if $k = m$ and $g \ne e$, then by the freeness of $\Gamma_0 \on{\alpha} (Y, \nu)$,
\begin{align*}
(\mu \times \nu^N)(\{(x, \y) \in X \times Y^N: y_k = gy_k\}) 
= \nu(\{y \in Y: y = gy\}) = 0.
\end{align*} 
Hence
\begin{align*}
&(\mu \times \nu^N)(\{(x, \y) \in X \times Y^N: p \text{ is not injective on } [(x, \y)]_{\R_\alpha}\}) \\
\le &\sum_{k \ne m \in N} \sum_{g \in \Gamma_0} (\mu \times \nu^N)(\{(x, \y) \in X \times Y^N: y_k = gy_m\}) \\
= &\sum_{k \ne m \in N} \sum_{g \in \Gamma_0} \int_{X \times Y^{N-1}} \nu(\{gy_m\})d(\mu \times \nu^{N-1})(x, (y_0, \dots, \hat y_k, \dots))
= 0
\end{align*}
since $\nu$ is non-atomic. 
\end{proof}
\end{Hoff_Commands}

\subsection{A separability argument}
\begin{Hoff_Commands}
Let $\lambda$ denote the Haar measure on $\mathbb{T}$.
Let $SL_2(\mathbb Z)\curvearrowright (\mathbb T^2,\lambda^2)$ be the pmp action given by matrix multiplication. Consider a fixed embedding of $\mathbb F_2$ as a finite index subgroup of $SL_2(\mathbb Z)$. Then the restricted action $\FF_2 \on{\alpha^0} (\mathbb{T}^2, \lambda^2)$ is free, weakly mixing, and {\it rigid}, in the sense of S. Popa \cite[Corollary 5.2]{Po01}.
The latter means that the inclusion of von Neumann algebras $L^{\infty}(\mathbb T^2)\subset L^{\infty}(\mathbb T^2)\rtimes\mathbb F_2$ has relative property (T), as defined in \cite[Definition 4.2]{Po01}.

If an equivalence relation $\mathcal R$ on $(X,\mu)$ is an expansion of $\R(\FF_2 \on{\alpha^0} \mathbb{T}^2)$, then there is a canonical way to define an extension $\mathbb F_2\curvearrowright^{\sigma} X$ of $\alpha^0$ whose orbit equivalence relation is contained in $\mathcal R$.
Specifically, if $p:X\rightarrow\mathbb T^2$ denotes the quotient map, then $\sigma$ is the unique  such action satisfying $p\circ\sigma(g)=\alpha^0(g)\circ p$, for every $g\in\mathbb F_2$.

\begin{lemma} \label{L: sep}
Let $\{\R_i\}_{i \in I}$ on $\{(X_i, \mu_i)\}_{i \in I}$ be an uncountable collection of stably von Neumann equivalent ergodic pmp countable equivalence relations, each an expansion of $\R(\FF_2 \on{\alpha^0} \mathbb{T}^2)$. For each $i \in I$, let $\FF_2 \on{\sigma^i} X_i$ denote the canonical extension of $\alpha^0$ with $\mathcal R(\FF_2 \on{\sigma^i} X_i)\leq\mathcal R_i$. 

Then there exists an uncountable set $J \subset I$ such that for any $i, j \in J$ there is a $\sigma^i$-invariant (resp. $\sigma^j$-invariant)  non-null Borel set  $E_i \subset X_i$ (resp. $E_j \subset X_j$) with the restricted actions $\sigma^i|_{E_i}$ and $\sigma^j|_{E_j}$ conjugate. 
\end{lemma}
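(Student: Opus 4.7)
The plan is a Popa-style separability argument exploiting the rigidity of the inclusion $L^{\infty}(\mathbb T^2) \subset L^{\infty}(\mathbb T^2) \rtimes_{\alpha^0} \FF_2$, in the spirit of \cite{Io06}. First, I would use stable von Neumann equivalence to place every $L(\mathcal R_i)$ inside a single finite von Neumann algebra $M$ with separable predual: fixing $i_0 \in I$ and passing to a suitable amplification of $L(\mathcal R_{i_0})$, for each $i$ one obtains a trace-preserving embedding of $L(\mathcal R_i)$ onto a corner $p_iMp_i$. A pigeonhole on the values $\tau(p_i) \in (0,\infty)$ lets one assume, after passing to an uncountable subset of $I$, that all $p_i$ have the same trace and, after a unitary adjustment, coincide with a single projection $p$. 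Applying Lemma~\ref{L: exp} to the extension $\mathcal R(\FF_2 \curvearrowright^{\sigma^i} X_i) \to \mathcal R(\FF_2 \curvearrowright^{\alpha^0} \mathbb T^2)$ then produces a trace-preserving $*$-embedding $\pi_i : L^{\infty}(\mathbb T^2) \rtimes_{\alpha^0} \FF_2 \hookrightarrow L(\mathcal R(\FF_2 \curvearrowright^{\sigma^i} X_i)) \subseteq pMp$ whose restriction to $L^{\infty}(\mathbb T^2)$ is the canonical diagonal $L^{\infty}(\mathbb T^2) \hookrightarrow L^{\infty}(X_i)$ induced by the quotient map $X_i \to \mathbb T^2$.

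Next, the inclusion $L^{\infty}(\mathbb T^2) \subset L^{\infty}(\mathbb T^2) \rtimes_{\alpha^0} \FF_2$ has relative property (T) by \cite[Corollary 5.2]{Po01}. The intertwining reformulation standard in Popa's deformation/rigidity theory (as in \cite[Section 4]{Io06}) yields: for every $\varepsilon > 0$ there exist a finite $F \subset L^{\infty}(\mathbb T^2)$ and $\delta > 0$ such that whenever $\pi, \pi' : L^{\infty}(\mathbb T^2) \rtimes \FF_2 \to pMp$ are trace-preserving embeddings with $\|\pi(a) - \pi'(a)\|_2 < \delta$ for every $a \in F$, there is a nonzero partial isometry $v \in pMp$ satisfying $v\pi(x) = \pi'(x)v$ for every $x \in L^{\infty}(\mathbb T^2) \rtimes \FF_2$. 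Since $L^2(pMp)$ is separable, pigeonhole applied to the map $i \mapsto (\pi_i(a))_{a \in F} \in L^2(M)^F$ produces an uncountable $J \subset I$ with $\|\pi_i(a) - \pi_j(a)\|_2 < \delta$ for all $a \in F$ and all $i, j \in J$; each such pair thus admits a nonzero intertwining partial isometry $v_{ij} \in pMp$.

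For any $i, j \in J$, the identity $v_{ij}\pi_i(x) = \pi_j(x)v_{ij}$ yields a trace-preserving $*$-isomorphism of corners
\[\mathrm{Ad}(v_{ij}) : \pi_i(L^{\infty}(\mathbb T^2) \rtimes \FF_2)(v_{ij}^*v_{ij}) \longrightarrow \pi_j(L^{\infty}(\mathbb T^2) \rtimes \FF_2)(v_{ij}v_{ij}^*).\]
Since $v_{ij}^*v_{ij}$ commutes with $\pi_i(L^{\infty}(\mathbb T^2) \rtimes \FF_2)$, projecting it onto $L^{\infty}(X_i) = \pi_i(L^{\infty}(\mathbb T^2))' \cap L(\mathcal R(\FF_2 \curvearrowright^{\sigma^i} X_i))$ (Lemma~\ref{L: exp}(3)) and averaging over $\sigma^i$ yields a non-null $\sigma^i$-invariant Borel set $E_i \subset X_i$; symmetrically one obtains $E_j \subset X_j$. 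A Feldman--Moore unpacking of $\mathrm{Ad}(v_{ij})$ on the corresponding invariant corners then produces the desired measurable conjugacy $\sigma^i|_{E_i} \cong \sigma^j|_{E_j}$.

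The hardest step will be the last one: passing from the intertwining by $v_{ij}$, which a priori only respects corners of $\pi_i(L^{\infty}(\mathbb T^2) \rtimes \FF_2)$ inside $pMp$ and not the larger subalgebras $L(\mathcal R(\FF_2 \curvearrowright^{\sigma^i} X_i))$, to a genuine Cartan-preserving isomorphism between corners of $L^{\infty}(X_i) \rtimes \FF_2$ and $L^{\infty}(X_j) \rtimes \FF_2$. This requires careful use of Lemma~\ref{L: exp}(3), together with the fact that $\sigma^i$ and $\sigma^j$ are canonical lifts of the same rigid action $\alpha^0$, so that the unitaries implementing $\sigma^i$ are essentially determined by $\pi_i$ once its abelian part is known.
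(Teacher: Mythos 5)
Your blueprint matches the paper's: a Popa-style separability argument driven by relative property (T) of $L^\infty(\mathbb{T}^2) \subset L^\infty(\mathbb{T}^2) \rtimes_{\alpha^0} \mathbb{F}_2$. But your statement of the intertwining consequence of relative property (T) has the quantifiers reversed, and this is a genuine gap. For a rigid inclusion $A \subset Q$, the finite ``test'' set $F$ must be taken inside $Q$ (the paper takes $F \subset (qQq)_1$), while the resulting vector/partial isometry is only $A$-central: relative property (T) gives $v$ with $v\pi(a) = \pi'(a)v$ for $a \in L^\infty(\mathbb{T}^2)$, \emph{not} for all of $L^\infty(\mathbb{T}^2)\rtimes\mathbb{F}_2$. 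This matters twice over. First, the separability pigeonhole must be applied to the images of a finite subset of the crossed product, since the group unitaries are what distinguish $\sigma^i$ from $\sigma^j$; testing only on $L^\infty(\mathbb{T}^2)$ discards precisely the information you need. Second, it changes what the last step has to accomplish.

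Because $v$ intertwines only the abelian rigid subalgebras, the passage to a conjugacy is more delicate than a ``Feldman--Moore unpacking.'' The paper's mechanism is: Lemma~\ref{L: exp}(3) gives $\pi_i(L^\infty(\mathbb{T}^2))' \cap L(\mathcal{R}_i) = L^\infty(X_i)$, whence $v L^\infty(X_j) v^* \subset L^\infty(X_i)$ and $v^* L^\infty(X_i) v \subset L^\infty(X_j)$, yielding a measure-space isomorphism $\Theta$ between positive-measure sets $F_i\subset X_i$, $F_j\subset X_j$. The real work is extending $\Theta$ equivariantly: for each $g\in\mathbb{F}_2$ one verifies that $u_{g_j}v^*u_{g_i}^*v$ commutes with the copy of $L^\infty(\mathbb{T}^2)$ in $L(\mathcal{R}_j)$ and hence (again by Lemma~\ref{L: exp}(3)) lies in $L^\infty(X_j)$; this is exactly what makes the formula $\Theta(x)=\sigma_g^j\circ\Theta\circ\sigma_{g^{-1}}^i(x)$ on $\sigma^i_g(F_i)$ independent of $g$, so that it glues to a conjugacy on the saturations $E_i$, $E_j$. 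Your remark that the $\sigma^i$-unitaries are ``determined by $\pi_i$ once its abelian part is known'' is not right as stated; the correct observation is the relative-commutant computation just described. A smaller issue: pigeonhole on uncountably many trace values $\tau_i(p_i)$ gives values within $\epsilon$ of some fixed $t$, not exact equality, and the paper must carry this $\epsilon$ through (the set $I_\epsilon$, the estimate $\|1_M - q_i\|_2\le\epsilon$); your phrasing ``all $p_i$ have the same trace'' needs this repair.
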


Lemma \ref{L: sep} is an analogue of \cite[Theorems 1.3 and 4.7]{Io06} for equivalence relations.
Its proof combines relative property (T) with a separability argument. Property (T) was first employed in the context of von Neumann algebras by A. Connes in \cite{Co80}. 
The original idea of combining property (T) and its relative version with a separability argument is due to S. Popa \cite{Po86}. It has since proven greatly influential and has been successfully used in various contexts, including in the work of D. Gaboriau and S. Popa in \cite{Po01,GP03}.

\begin{proof}
Since $\{\R_i\}_{i\in I}$ are stably von Neumann equivalent, after replacing $I$ with an uncountable subset, we may find a separable ${\rm II}_1$ factor $M$ and non-zero projections $p_i \in L(\R_i)$ such that $M \cong p_iL(\R_i)p_i$, for all $i \in I$. 
We denote by $\tau$ and $\|.\|_2$ the trace and $2$-norm on $M$, and by $\tau_i$ the trace on $L(\mathcal R_i)$.
For each $i \in I$, let $B_i = L^\infty(X_i)$ and $N_i = L^\infty(X_i) \rtimes_{\sigma^i} \FF_2$, regarded  as subalgebras of $L(\R_i)$.
Let $A = L^\infty(\mathbb{T}^2)$ and $Q = L^\infty(\mathbb{T}^2) \rtimes_{\alpha^0} \FF_2$.
We have copies $A_i \cong A$, $Q_i \cong Q$ with $A_i \subset B_i$, $Q_i \subset N_i$, and by Lemma \ref{L: exp}, $A_i' \cap L(\mathcal R_i) = B_i$ for each $i\in I$.

Since $I$ is uncountable, we can find $t\in (0,1]$ such that  $I_\epsilon = \{i \in I| 1-\epsilon^2 \le t/\tau_i(p_i)\le 1\}$ is uncountable, for all $\epsilon > 0$. 
As $A$ is diffuse, there is a projection $q \in A$ such that $\tau(q)=t$. Since $Aq \subset qQq$ has relative property (T), there is a finite set $F \subset (qQq)_1$ and $\delta > 0$ such that for any $qQq$-$qQq$ bimodule $\H$ with nonzero $\xi_0 \in \H$ satisfying $\|x\xi_0 - \xi_0 x\| < \delta \|\xi_0\|$ for all $x \in F$, there is nonzero $\xi \in \H$ with $a\xi = \xi a$ for all $a \in Aq$.
Let $\epsilon > 0$ small enough that $\frac{3\epsilon}{1-2\epsilon} < \delta$ and set $I_1 = I_\epsilon$. 

For $x \in Q$, we let $x_i \in Q_i$ denote the image in $Q_i$.
Each $L(\R_i)$ is a factor, so by conjugating by a unitary in each, we may assume that $q_i \le p_i$, for all $i \in I_1$. Then identifying $p_iL(\R_i)p_i$ with $M$, we have $q_i \in M$ and $\tau(q_i) \ge 1-\epsilon^2$ so that $\|1_M - q_i\|_2 \le \epsilon$, for each $i \in I_1$. 

Then for any $i, j \in I_1$, endow $q_iL^2(M)q_j$ with a $qQq$-$qQq$ bimodule structure given by defining $x \cdot \xi \cdot y = x_i\xi y_j$, for all $x, y \in qQq$ and $\xi \in q_iL^2(M)q_j$. 
 Let $\xi_{i, j} = q_iq_j \in q_iL^2(M)q_j$ and note that $\|\xi_{i, j} - 1_M\|_2 \le \|1_M - q_i\|_2 + \|1_M - q_j\|_2 \le 2\epsilon$ and hence $\|\xi_{i, j}\|_2 \ge 1 - 2\epsilon$.
 
Since $M$ is $\|\cdot\|_2$-separable, there is an uncountable set $J \subset I_1$ such that $\|x_i - x_j\|_2 < \epsilon$ for all $i, j \in J$ and $x \in F$. Fix any $i, j \in J$. Then for any $x \in F$,
\begin{align*}
\|x_i\xi_{i, j} - \xi_{i, j}x_j\|_2 
\le \|x_i - x_j\|_2 + \|1_M - q_i\|_2 + \|1_M - q_j\|_2 
\le 3\epsilon < \delta(1-2\epsilon) \le \delta \|\xi_{i, j}\|_2,
\end{align*}

and so by relative property (T) there is nonzero $\xi \in q_iL^2(M)q_j$ with $a_i\xi = \xi a_j$ for all $a \in Aq$. Then the polar decomposition $\xi = v|\xi|$ has $v \in M$ with $a_iv = v a_j$ for all $a \in Aq$. Set $e_i = vv^*$ and $e_j = v^*v$. 

For any $b \in B_jq_j$ and $a \in Aq$ we have $a_ivbv^* = va_jbv^* = vba_jv^* = vbv^*a_i$, so $vB_jv^* \subset (A_iq_i)' \cap q_iMq_i = B_iq_i$ and similarly $v^*B_iv \subset B_jq_j$, and in particular, $e_i \in B_i$, $e_j \in B_j$.  
We thus define a trace preserving $*$-isomorphism $\Psi: B_je_j \to B_ie_i$ by $b \mapsto vbv^*$. 

Then for positive measure sets $F_i \subset X_i$ and $F_j \subset X_j$ with $e_i = 1_{F_i}$ and $e_j = 1_{F_j}$, there is a measure space isomorphism $\Theta: (F_i, \mu_i) \to (F_j, \mu_j)$ such that 
$\Psi(b) = b \circ \Theta$ for all $b \in B_j$. 
Let $E_i = \bigcup_{g \in \FF_2} \sigma^i_g(F_i)$ and $E_j = \bigcup_{g \in \FF_2} \sigma^j_g(F_j)$. 
Then $E_i$ is $\sigma^i$-invariant, $E_j$ is $\sigma^j$-invariant and 
we will show that $\Theta$ can be extended to a measure space isomorphism $\Theta: (E_i, \mu_i) \to (E_j, \mu_j)$ by the formula 
\begin{align}\label{E: extgoal}
\Theta(x) = [\sigma_g^j \circ \Theta \circ \sigma_{g^{-1}}^i](x) \quad \text{for} \quad x \in \sigma^i_g(F_i), \;g \in \FF_2
\end{align}
which will then satisfy $[\sigma_g^j \circ \Theta](x) = [\Theta \circ \sigma_{g}^i](x)$ for $x \in E_i$, $g \in \FF_2$, showing that $\sigma^i|_{E_i}$ and $\sigma^j|_{E_j}$ are conjugate.
Toward showing that \eqref{E: extgoal} is well defined, for $g \in \FF_2$ let $u_{g_i} \in Q_i$ and $u_{g_j} \in Q_j$ denote respectively the canonical unitaries implementing $\sigma^i$ and $\sigma^j$. Viewing $v^*u_{g_i}^*v \in e_jL(\R_j)e_j \subset L(\R_j)$, for $a \in A$ we have 
\begin{align*}
a_ju_{g_j}v^*u_{g_i}^*v 
= u_{g_j}\sigma^j_{g^{-1}}(a_j)v^*u_{g_i}^*v 
= u_{g_j}v^*\sigma^i_{g^{-1}}(a_i)u_{g_i}^*v
= u_{g_j}v^*u_{g_i}^*a_iv
= u_{g_j}v^*u_{g_i}^*va_j
\end{align*}
so that $u_{g_j}v^*u_{g_i}^*v \in A_j' \cap L(\R_j) = B_j$. Therefore for any $b \in B_j$, we have
\begin{align*}
u_{g_i}vu_{g_j}^*b(u_{g_j}v^*u_{g_i}^*v)v^*
= u_{g_i}vu_{g_j}^*(u_{g_j}v^*u_{g_i}^*v)bv^*
= (u_{g_i}e_iu_{g_i}^*)vbv^*
\end{align*}
and hence 
\begin{align*}
\sigma_g^i(v\sigma_{g^{-1}}^j(b)v^*)e_i = vbv^*\sigma_g^i(e_i) \quad \text{for all} \quad b \in B_j, g \in \FF_2,
\end{align*}
which when applied to $h^{-1}g \in \FF_2$ for $g, h \in \FF_2$ gives
\begin{align*}
\sigma_g^i(v\sigma_{g^{-1}}^j(b)v^*)\sigma^i_h(e_i) = \sigma_h^i(v\sigma_{h^{-1}}^j(b)v^*)\sigma_g^i(e_i) \quad \text{for all} \quad b \in \sigma_{h}^j(B_j) = B_j,
\end{align*}
which translates to
\begin{align*}
[\sigma_g^j \circ \Theta \circ \sigma_{g^{-1}}^i](x) = [\sigma_h^j \circ \Theta \circ \sigma_{h^{-1}}^i](x) \quad \text{for all} \quad x \in \sigma^i_g(F_i) \cap \sigma^i_h(F_i)
\end{align*}
showing that \eqref{E: extgoal} is well defined.
\end{proof}
\end{Hoff_Commands}

\subsection{Proof of Theorem \ref{T: ext}}
\begin{Hoff_Commands}

 Let $\mathcal R$ be a non-amenable ergodic countable pmp equivalence relation on a probability space $(X,\mu)$. 
Our goal is to show that $\mathcal R$ has uncountably many ergodic extensions which are pairwise not stably von Neumann equivalent.
Below, for a pmp action $\mathbb F_2\curvearrowright^{\alpha} (Y,\nu)$, we denote by $\pi_{\alpha}^0$ and $\pi_{\alpha}$ the Koopman representations of $\mathbb F_2$ on $L^2(Y)\ominus\mathbb C1$ and $L^2(Y)$, respectively.

Let $\tilde \R$ on $(\tilde X, \tilde \mu)$ denote the Bernoulli extension of $\R$ with base space $([0,1],\lambda)$. By Theorem \ref{main}, there is a free ergodic pmp action $\FF_2 \on{\beta} \tilde X$ such that $\R_0: = \R(\FF_2 \on{\beta} \tilde X) \leq \tilde \R$. 

By \cite{Sz88} there is an uncountable family $\{\pi_i: \FF_2 \to \U(H_i)\}_{i \in I}$ of non-equivalent irreducible representations who are mixing, i.e. $\<\pi_i(g)\xi, \eta\> \to 0$ as $g \to \infty$ for any $\xi, \eta \in \H_i$. 
By considering the Gaussian action corresponding to the realification of $\pi_i$ (as in \cite{Ke10}, for example), we obtain an uncountable family of actions $\{\FF_2 \on{\alpha^i} (Y_i, \mu_i)\}_{i \in I}$ such that $\pi_i \subset \pi_{\alpha^i}^0$, for each $i \in I$. 

For each $i \in I$, note that $\alpha^i \times \alpha^0$ is weakly mixing since $\alpha^i$ is mixing and $\alpha^0$ is weakly mixing. By Proposition \ref{P: cind props}, $\tilde \R_i = {\rm CInd}_\beta^{\tilde \R}(\alpha^i \times \alpha^0)$ on $(\tilde Y_i, \tilde \nu_i)$ is an ergodic extension of $\tilde \R$ and hence of $\R$. Thus, we are done, unless uncountably many of the $\tilde \R_i$ are stably von Neumann equivalent. 
Therefore, assume toward a contradiction that there is an uncountable subset $I_0 \subset I$ such that the $\{\tilde \R_i\}_{i \in I_0}$ are stably von Neumann equivalent. 

By Proposition \ref{P: cind props}, each $\tilde\R_i$ is an expansion of $\mathcal R(\mathbb F_2\curvearrowright^{\alpha^i \times \alpha^0}Y_i\times\mathbb T^2)$ and hence of $\mathcal R(\mathbb F_2\curvearrowright^{\alpha^0}\mathbb T^2)$. Let $\FF_2 \on{\sigma^i} \tilde Y_i$ denote the canonical extension of $\alpha^0$.
Then by Lemma \ref{L: sep}, there is an uncountable subset $J \subset I_0$ such that for each $i, j \in J$ there is a $\sigma^i$-invariant (resp. $\sigma^j$-invariant) positive measure set $E_i \subset \tilde Y_i$ (resp. $E_j \subset \tilde Y_j$) with the restricted actions $\sigma^i|_{E_i}$ and $\sigma^j|_{E_j}$ conjugate. 

Since $\sigma^i$ is an extension of the ergodic action $\alpha^i \times \alpha^0$ of $\mathbb F_2$, $\sigma^i|_{E_i}$ is also an extension thereof. Hence, for all $i, j \in J$,
\begin{align*}
\pi_i \subset \pi^0_{\alpha^i} \subset \pi^0_{\alpha^i \times \alpha^0} \subset \pi^0_{\sigma^i|_{E_i}} \cong \pi^0_{\sigma^j|_{E_j}}\subset\pi_{\sigma^j}
\end{align*}
so that $\pi_{\sigma^j}$ has uncountably many nonequivalent irreducible sub-representations, contradicting the separability of $L^2(E_j)$. 
\hfill$\square$
\end{Hoff_Commands}


\section{Actions of locally compact groups} In this section we  prove Theorem \ref{lc} and explain how Theorem \ref{main} implies \cite[Theorem B]{GM15}. We begin by recalling the notion of cross section of actions of lcsc groups (see \cite[Definition 4.1]{KPV13}).

\begin{definition} Let $G$ be a lcsc group and $G\curvearrowright (X,\mu)$ a free nonsingular action on a standard probability space $(X,\mu)$. 
 A Borel set $Y\subset X$ is called a {\it cross section} of  $G\curvearrowright (X,\mu)$ if there exists a neighborhood $U$ of the identity in $G$ such that the map $U\times Y\rightarrow X$ given $(g,y)\mapsto gy$ is injective, and $\mu(X\setminus G\cdot Y)=0$. 
A cross section $Y\subset X$ is called {\it co-compact} if there is a compact set $K\subset G$ such that $K\cdot Y$ is a $G$-invariant Borel set and 
$\mu(X\setminus K\cdot Y)=0$. 
\end{definition}

\begin{remark} Assume that $G$ is a lcsc unimodular group. Let  $G\curvearrowright (X,\mu)$ be a free pmp action and $Y\subset X$ a cross section.
Then $\mathcal R=\{(y,y')\in Y\times Y|Gy=Gy'\}$ defines a countable Borel equivalence relation, called the {\it cross section equivalence relation}. 
Moreover, if $\lambda$ is a fixed Haar measure of $G$, then there exist a unique  $\mathcal R$-invariant probability measure $\nu$ on $Y$  and constant $c\in (0,+\infty)$ such that for every neighborhood $U$ of the identity in $G$ such that the map $\zeta:U\times Y\rightarrow X$ given by $\zeta(g,y)=gy$ is injective, we have $\zeta_*({\lambda}_{|U}\times\nu)=c\;\mu_{|U\cdot Y}$ (see \cite[Proposition 4.3]{KPV13}). Hereafter, we refer to $\nu$ as the {\it canonical} $\mathcal R$-invariant probability measure on $Y$.
\end{remark}

We continue with an elementary result which gives a construction of actions of locally compact groups with prescribed cross section equivalence relations.

\begin{proposition}\label{cross}
Let $G$ be a lcsc unimodular group and $G\curvearrowright (X,\mu)$ a free pmp action. Let $Y\subset X$ be a co-compact cross section of $G\curvearrowright (X,\mu)$, $\mathcal R$ be the cross section equivalence relation, and $\nu$ be the canonical $\mathcal R$-invariant probability measure on $Y$. Let $\bar{\mathcal R}$ be a countable pmp extension of $\mathcal R$ on a standard probability space $(\bar{Y},\bar{\nu})$.

Then there exist a free pmp action $G\curvearrowright (\tilde X,\tilde\mu)$,  and a co-compact cross section $\tilde Y\subset\tilde X$  such that the following holds. Denote by $\tilde{\mathcal R}$ the cross section equivalence relation on $\tilde Y$, and endow $\tilde Y$ with the canonical $\tilde{\mathcal R}$-invariant probability measure $\tilde\nu$. Then $\tilde{\mathcal R}$ is isomorphic to $\bar{\mathcal R}$.

\end{proposition}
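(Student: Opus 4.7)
The plan is to build $(\tilde X, \tilde\mu)$ as a quotient of $G \times \bar Y$ by an equivalence relation defined via a cocycle lifted from $\mathcal R$ to $\bar{\mathcal R}$ through the extension map. First, by freeness of $G \curvearrowright X$, for each $(y,y') \in \mathcal R$ there is a unique $\alpha(y',y) \in G$ with $\alpha(y',y)\cdot y = y'$, giving a Borel cocycle $\alpha:\mathcal R \to G$ satisfying $\alpha(y'',y')\alpha(y',y) = \alpha(y'',y)$ and $\alpha(y,y)=e$. Composition with the extension map $p:\bar Y \to Y$ yields a Borel cocycle $\bar\alpha:\bar{\mathcal R}\to G$ defined by $\bar\alpha(\bar y',\bar y) := \alpha(p(\bar y'),p(\bar y))$.

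Next, declare $(g_1,\bar y_1) \sim (g_2,\bar y_2)$ on $G \times \bar Y$ iff $(\bar y_1,\bar y_2) \in \bar{\mathcal R}$ and $g_2 = g_1 \bar\alpha(\bar y_2,\bar y_1)^{-1}$, and set $\tilde X := (G \times \bar Y)/\sim$. Left multiplication of $G$ on the first factor preserves $\sim$ and descends to a $G$-action on $\tilde X$; it is free, since $(gh,\bar y) \sim (h,\bar y)$ forces $h = gh\,\bar\alpha(\bar y,\bar y)^{-1} = gh$, so $g=e$. To equip $\tilde X$ with an invariant probability measure, I would use co-compactness of $Y \subset X$ to exhibit a Borel fundamental domain $F \subset G \times Y$ for the $\mathcal R$-action induced by $\alpha$, with $(\lambda \times \nu)(F) = c^{-1}$, where $c$ is the constant in $\zeta_*(\lambda_{|U} \times \nu) = c\mu_{|UY}$ (constructed from a measurable Voronoi-type selector based on a compact $K$ with $K\cdot Y = X$). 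Its preimage $\bar F = \{(g,\bar y) : (g,p(\bar y)) \in F\}$ is a fundamental domain for $\sim$, and since $p_*\bar\nu = \nu$ it has the same $(\lambda \times \bar\nu)$-mass; pushing forward $c\cdot(\lambda \times \bar\nu)|_{\bar F}$ through the quotient map yields a $G$-invariant probability measure $\tilde\mu$ on $\tilde X$.

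Finally, the candidate cross section is $\tilde Y := \{[(e,\bar y)] : \bar y \in \bar Y\}$, and the map $\bar y \mapsto [(e,\bar y)]$ is Borel and injective: $(e,\bar y_1) \sim (e,\bar y_2)$ forces $\bar\alpha(\bar y_2,\bar y_1) = e$, i.e.\ $p(\bar y_1) = p(\bar y_2)$, so injectivity of $p$ on $\bar{\mathcal R}$-classes gives $\bar y_1 = \bar y_2$. The neighborhood $U$ and compact set $K$ that witness co-compactness of $Y \subset X$ transfer directly to $\tilde Y \subset \tilde X$, using the injectivity of $\zeta:U\times Y \to X$ to deduce injectivity of the corresponding map $U\times \bar Y \to \tilde X$. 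By construction, $[(e,\bar y)]$ and $[(e,\bar y')]$ lie on the same $G$-orbit in $\tilde X$ exactly when $(\bar y,\bar y') \in \bar{\mathcal R}$, with translating element $\bar\alpha(\bar y',\bar y)$; hence the cross-section equivalence relation $\tilde{\mathcal R}$ on $\tilde Y$ is isomorphic to $\bar{\mathcal R}$ via $\bar y \mapsto [(e,\bar y)]$, and matching the normalizations shows that the canonical $\tilde{\mathcal R}$-invariant probability measure on $\tilde Y$ is $\bar\nu$. The main obstacle I expect is the measurable bookkeeping for the fundamental domain $F$ and the precise matching of the normalization constants to ensure $\tilde\mu(\tilde X) = 1$ and that the canonical cross-section measure on $\tilde Y$ is indeed $\bar\nu$; the algebraic idea is transparent, since a free pmp action of $G$ with a co-compact cross section is determined up to isomorphism by the pair consisting of its cross-section equivalence relation and its $G$-valued cocycle, and class-bijective extensions of the equivalence relation correspond to extensions of this data under pullback of the cocycle.
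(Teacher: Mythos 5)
Your construction is correct in substance but takes a genuinely different route from the paper. You realize $\tilde X$ via the Mackey/cocycle suspension: pull the $G$-valued cocycle $\alpha$ on $\mathcal R$ (coming from freeness of $G \curvearrowright X$) back through $p$ to a cocycle $\bar\alpha$ on $\bar{\mathcal R}$, and then set $\tilde X = (G\times \bar Y)/\!\sim$ with $G$ acting by left translation on the first factor. The paper instead builds $\tilde X$ concretely as the fibered product $X\times_Y \bar Y = \{(x,\bar y): \pi(x)=p(\bar y)\}$, where $\pi:X\to Y$ is a Borel selector with $\pi(x)\in Gx$ (obtained after the same preliminary reduction you would need, to an everywhere-free action with $K\cdot Y=X$ exactly); the $G$-action moves $x$ and lifts the resulting change of $\pi(x)$ along the class-bijective map $p$, and the invariant measure is lifted from $\tilde\nu$ via the result of Slutsky quoted in the paper. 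The two spaces are $G$-equivariantly isomorphic, but the paper's fibered product is by construction a Borel subset of $X\times\bar Y$, so one gets a standard Borel space for free and the verification that $\tilde Y$ is a co-compact cross section is a direct transfer of the corresponding facts for $Y\subset X$. In your approach, you still owe two technical points that you only sketch: (i) that $\sim$ is a smooth countable Borel equivalence relation so that the quotient $\tilde X$ is standard Borel (your fundamental-domain selector does this, and a clean choice is $F=\{(\alpha(x,\pi(x)),\pi(x)): x\in X\}$, whose $p$-preimage gives $\bar F$); and (ii) the matching of normalization constants so that $\bar\nu$ is exactly the canonical cross-section measure, for which unimodularity of $G$ is used in the same way it enters in the paper's use of \cite[Proposition~4.3]{KPV13}. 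Once those are filled in, your argument is a valid and arguably more conceptual alternative, since it makes explicit that a free pmp $G$-action with co-compact cross section is encoded by the pair (cross-section relation, $G$-cocycle) and that class-bijective extensions correspond to cocycle pullback.
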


{\it Proof.}  
Let $X_1\subset X$  be the set of points with trivial stabilizer. Then $X_1$ is a $G$-invariant Borel set (see e.g. \cite[Lemma 10]{MRV11}). Moreover, the freeness assumption implies that $X_1\subset X$ is co-null. Let $K\subset G$ be a compact set such that $X_2=K\cdot Y$ is a co-null $G$-invariant Borel subset of $X$.

 Put $X_0:=X_1\cap X_2$ and $Y_0:=Y\cap X_0$. Then  $X_0\subset X$ is a co-null $G$-invariant Borel subset,
$Y_0\subset Y$ is an $\mathcal R$-invariant Borel subset, and $K\cdot Y_0=X_0$. Let $U$ be a neighborhood of the identity in $G$ such that the map $U\times Y\rightarrow X$ given $(g,y)\mapsto gy$ is injective. Since $U\cdot (Y\setminus Y_0)$ is contained in $X\setminus X_0$, it is a null set. Let $\lambda$ be a  Haar measure of $G$. Since $\lambda(U)\nu(Y\setminus Y_0)=c\;\mu(U\cdot(Y\setminus Y_0))=0$, for some  $c>0$, and $\lambda(U)>0$, we get that $Y_0$ is co-null in $Y$. 

Altogether, we have that  $G\curvearrowright (X_0,\mu_{|X_0})$ is a pmp action such that every point has trivial stabilizer, $Y_0\subset X_0$ is a co-compact cross section with $K\cdot Y_0=X_0$,  $\mathcal R\resto Y_0$ is the associated cross section equivalence relation, and $\nu_{|Y_0}$ is the canonical $\mathcal R\resto Y_0$-invariant probability measure on $Y_0$. Moreover, since $Y_0\subset Y$ is co-null, $\mathcal R\resto Y_0$ is isomorphic to $\mathcal R$.
Thus, after replacing $X$, $Y$ with $X_0$, $Y_0$,
 we may assume that the stabilizer of every point in $X$ is trivial, and $K\cdot Y=X$, for a compact set $K\subset G$.  
 
Let $U$ be a neighborhood of the identity in $G$ such that the map $U\times Y\rightarrow X$ given $(g,y)\mapsto gy$ is injective. 
Define $\pi:U\cdot Y\rightarrow Y$ by letting $\pi(gy)=y$. Since $K$ is compact, we can find $g_1,...,g_n\in G$ such that $K\subset\cup_{i=1}^ng_iU$. Hence $X=K\cdot Y\subset \cup_{i=1}^ng_iU\cdot Y$. It follows that we can extend $\pi$ to a Borel map $\pi:X\rightarrow Y$ in such a way that $\pi(x)\in Gx$, for every $x\in X$.

Let $p:\bar{Y}\rightarrow Y$ be the quotient map. After replacing $\bar{Y}$ with a co-null $\bar{\mathcal R}$-invariant Borel subset, we may assume that $p|_{[\bar{y}]_{\bar{\mathcal R}}}$ is injective and $p([\bar{y}]_{\bar{\mathcal R}})=[p(\bar{y})]_{\mathcal R}$, for all $\bar{y}\in\bar{Y}$.

Let $\tilde X=X\times_{Y}\bar{Y}$ be the ``fibered product" Borel space given by $\tilde X=\{(x,\bar{y})\in X\times\bar{Y}|\pi(x)=p(\bar{y})\}$. We define a free Borel action $G\curvearrowright\tilde X$ as follows. Let $g\in G$ and $(x,\bar{y})\in\tilde X$. Since $\pi(gx)\in Gx\cap Y$, we get that $\pi(gx)\in [\pi(x)]_{\mathcal R}=[p(\bar{y})]_{\mathcal R}$. Thus, there is a unique $\hat{y}\in [\bar{y}]_{\bar{\mathcal R}}$ such that $p(\hat{y})=\pi(gx)$.
Finally, we let $g (x,\bar{y})=(gx,\hat{y})$. It is easy to check that this indeed defines a Borel action of $G$.

Next, let $\tilde Y=\{(p(\bar{y}),\bar{y})|\bar{y}\in\bar{Y}\}$. Then $\tilde Y$ is a Borel subset of $\tilde X$.
Let $(g_1,\bar{y}_1),(g_2,\bar{y}_2)\in U\times\bar{Y}$ such that $g_1 (p(\bar{y}_1),\bar{y}_1)=g_2 (p(\bar{y}_2),\bar{y}_2)$. Then $g_1p(\bar{y}_1)=g_2p(\bar{y}_2)$ and since $p(\bar{y}_1),p(\bar{y}_2)\in Y$, we deduce that $g_1=g_2$, which implies that $y_1=y_2$. Thus, the map $U\times\tilde Y\rightarrow \tilde X$ given by $(g,y)\mapsto g y$ is injective. Let $(x,\bar{y})\in\tilde X$. Since $K\cdot Y=X$, we can find $g\in K$ such that $g^{-1}x\in Y$.
 Let $\hat{y}\in [\bar{y}]_{\bar{\mathcal R}}$ such that $g^{-1}(x,\bar{y})=(g^{-1}x,\hat{y})$. Since $p(\hat{y})=\pi(g^{-1}x)=g^{-1}x$, we deduce that $g^{-1}(x,\bar{y})\in\tilde Y$. Thus, $K\cdot\tilde Y=\tilde X$. This  proves that $\tilde Y$ is  a co-compact cross section for the Borel action $G\curvearrowright\tilde X$. 
In particular, the first paragraph of the proof implies that there exists a Borel map $\tilde\pi:\tilde X\rightarrow\tilde Y$ such that $\tilde\pi(y)=y$, for every $y\in\tilde Y$, and $\tilde\pi(x)\in Gx$, for every $x\in\tilde X$.

Further, consider the cross section equivalence relation $\tilde{\mathcal R}=\{(y,y')\in\tilde Y\times\tilde Y|Gy=Gy'\}$.
Let $\theta:\bar{Y}\rightarrow\tilde Y$ be the Borel isomorphism given by $\theta(\bar{y})=(p(\bar{y}),\bar{y})$. It is easy to see that $(\theta\times\theta)(\bar{\mathcal R})=\tilde{\mathcal R}$. We endow $\tilde Y$ with the probability measure $\tilde\nu=\theta_*\bar{\nu}$. Since $\bar{\nu}$ is $\bar{\mathcal R}$-invariant, $\tilde{\nu}$ is $\tilde{\mathcal R}$-invariant.

By \cite[Section 4.2]{Sl15}, the $\tilde{\mathcal R}$-invariant probability measure $\tilde{\nu}$ on the co-compact cross section $\tilde Y$ can be ``lifted" to a $G$-invariant finite measure $\tilde{\mu}$ on $\tilde X$.  Specifically, for a Borel set $A\subset\tilde X$, we have

$$\tilde\mu(A)=(\lambda\times\tilde\nu)(\{(g,\tilde y)\in G\times\tilde Y|\tilde\pi(g\tilde y)=\tilde y\;\;\text{and}\;\;g\tilde y\in A\}).$$

Since the map $U\times\tilde Y\rightarrow U\cdot\tilde Y$ given by $(g,y)\mapsto g y$ is a bijection, it follows that under this identification we have that $\tilde\mu_{|U\cdot\tilde Y}=\lambda_{|U}\times\tilde\nu$. By using \cite[Proposition 4.3]{KPV13} we conclude that $\tilde\nu$ is the canonical $\tilde{\mathcal R}$-invariant probability measure on the cross section $\tilde Y$ for the free pmp action $G\curvearrowright (\tilde X,\frac{1}{\tilde\mu(X)}\tilde\mu)$. This concludes the proof of the proposition.
\hfill$\square$

\subsection{Proof of Theorem \ref{lc}}
Let $G\curvearrowright (X,\mu)$ be a free ergodic pmp action (see \cite[Remark 1.1]{KPV13} for a proof of existence). By \cite[Theorem 4.2]{KPV13} we can find a co-compact cross section $Y$ of $G\curvearrowright (X,\mu)$. Denote by $\mathcal R$ the associated cross section equivalence relation, and endow $Y$ with the canonical $\mathcal R$-invariant probability measure $\nu$. Since $G$ is non-amenable and $G\curvearrowright (X,\mu)$ is ergodic, \cite[Proposition 4.3]{KPV13} gives that $\mathcal R$ is non-amenable and ergodic.

By Theorem \ref{T: ext}, we can find an uncountable family $\{\tilde{\mathcal R}_i\}_{i\in I}$ of countable ergodic pmp extensions of $\mathcal R$ which are pairwise not stably von Neumann equivalent. 
By Proposition \ref{cross}, for every $i\in I$ we can find a free ergodic pmp action $G\curvearrowright (\tilde X_i,\tilde{\mu}_i)$ and a co-compact cross section $\tilde Y_i\subset\tilde X_i$ such that the associated cross section equivalence relation is isomorphic to $\tilde{\mathcal R}_i$.

We claim that the actions $G\curvearrowright (\tilde X_i,\tilde\mu_i)$, $i\in I$, are pairwise not von Neumann equivalent. Indeed, assume that $L^{\infty}({\tilde X}_i)\rtimes G\cong L^{\infty}(\tilde X_j)\rtimes G$, for some $i\not=j$. On the other hand, $L^{\infty}(\tilde X_i)\rtimes G$ and $L^{\infty}(\tilde X_j)\rtimes G$ are  amplifications of the II$_1$ factors $L(\tilde{\mathcal R}_i)$ and $L(\tilde{\mathcal R}_j)$ by \cite[Lemma 4.5]{KPV13}. It follows that we can find non-zero projections $p_i\in L(\tilde{\mathcal R}_i)$ and $p_j\in L(\tilde{\mathcal R}_j)$ such that $p_iL(\tilde{\mathcal R}_i)p_i\cong p_jL(\tilde{\mathcal R}_j)p_j$. This contradicts the fact that $\tilde{\mathcal R}_i$ and $\tilde{\mathcal R}_j$ are not stably von Neumann equivalent.
\hfill$\square$

\subsection{Deducing  \cite[Theorem B]{GM15} from Theorem \ref{main}}\label{GM} Let $G$ be a non-amenable lcsc group. Let $\lambda$ be a Haar measure of $G$. 
To show the existence of a tychomorphism from $\mathbb F_2$ to $G$, in the sense of \cite[Definition 14]{GM15}, we first reduce to the case when $G$ is unimodular. 

Denote by $G_0$ the kernel of the modular homomorphism of $G$. Then $G_0$ is non-amenable. Moreover, $G_0$ is unimodular. Indeed, since $G_0<G$ is a closed normal subgroup,  $G/G_0$ is a locally compact group, thus it admits a $G$-invariant Borel measure. \cite[Corollary B.1.7.]{BdHV08} now implies that $G_0$ is unimodular.
Thus, by \cite[Proposition 18]{GM15}, we may assume that $G$ is unimodular. Since the conclusion follows from the Gaboriau-Lyons theorem in the discrete case, we may additionally assume that $G$ is not discrete.

 Let $G\curvearrowright (X,\mu)$ be a free ergodic pmp action, $Y$ a co-compact cross section, $\mathcal R$ the cross section equivalence relation, and $\nu$ the canonical $\mathcal R$-invariant probability measure on $Y$. Since $G$ is non-amenable and $G\curvearrowright (X,\mu)$ is ergodic, $\mathcal R$ is non-amenable and ergodic.  

 By  Theorem \ref{main} there exist a countable ergodic pmp extension $\tilde{\mathcal R}$ of $\mathcal R$ on a probability space $(\tilde Y,\tilde\nu)$ and a free ergodic pmp action $\mathbb F_2\curvearrowright (\tilde Y,\tilde\nu)$ such that $\mathbb F_2y\subset [y]_{\tilde{\mathcal R}}$, for all $y\in\tilde Y$. By Proposition \ref{cross}, we can be realize $\tilde Y$ as a co-compact cross section of some free ergodic pmp action $G\curvearrowright (\tilde X,\tilde\mu)$, such that $\tilde{\mathcal R}$ is precisely the associated cross section equivalence relation. Moreover,  the proof of Proposition \ref{cross} gives that any point in $\tilde X$ has trivial stabilizer and $K\cdot\tilde Y=\tilde X$, for $K\subset G$ compact.

Let $U\subset G$ be a neighborhood of the identity such that the map $\zeta:U\times\tilde Y\rightarrow\tilde X$ given by $\zeta(h,y)=hy$ is injective. Define $$\tilde X_0:=U\cdot\tilde Y\;\;\;\text{and}\;\;\;\mathcal D:=\{(x, x')\in\tilde X\times\tilde X_0)| Gx=Gx'\}.$$ Consider the obvious action of $G$ on $\mathcal D$ on the first coordinate.
As in the end of \cite[Section 5]{GM15}, we endow $\mathcal D$ with a $G$-invariant measure $m$ by pushing forward $\lambda\times\mu_{|\tilde X_0}$ through the identification $G\times\tilde X_0\rightarrow\mathcal D$ given by $(g,x)\mapsto (gx,x)$. Then $(\mathcal D,m)$ is a finite amplification of the $G$-space $(G,\lambda)$, in the sense of \cite[Definition 11]{GM15}.

Next, we define an $m$-preserving action $\mathbb F_2\curvearrowright\mathcal D$, as follows. Fix $\theta\in\mathbb F_2$. If $(x,x')\in\mathcal D$, then $x'\in\tilde X_0$, hence
 we can write $x'=hy$, for some $h\in U$ and $y\in\tilde Y$. We define $\tilde\theta(x,x')=(x,h\theta(y))$. Let $\alpha:\tilde Y_0\rightarrow G$ be given by $\theta(y)=\alpha(y)y$, for every $y\in\tilde Y_0$. Then in the above identification $G\times\tilde X_0\equiv\mathcal D$, $\tilde\theta$ corresponds to the Borel automorphism of $G\times\tilde X_0$ given by $(g,hy)\mapsto (gh\alpha(y)^{-1}h^{-1},h\theta(y)),$ for all $h\in U, y\in\tilde Y_0$. Since $G$ is unimodular, $\zeta_*(\lambda_{|U}\times\tilde\nu)=c\;\tilde\mu_{|X_0}$, for some $c>0$, and $\theta$ preserves $\nu$,  it follows that $\tilde\theta$ preserves $m$.

We claim that $\mathbb F_2\curvearrowright \mathcal D$ admits a non-null  measurable fundamental domain. Since the  actions of $\mathbb F_2$ and $G$ on $\mathcal D$ commute,
it will follow that $\mathcal D$ gives rise to a tychomorphism from $\mathbb F_2$ to $G$.
To prove the claim, since $K$ is compact, let $g_1,...,g_n\in G$ such that $K\subset\cup_{j=1}^ng_jU$. Then $\tilde X=K\cdot\tilde Y\subset\cup_{j=1}^ng_j\tilde X_0$ and therefore  $\mathcal D=\cup_{j=1}^ng_j\mathcal D_0$, where $\mathcal D_0:=\{(x,x')\in\tilde X_0\times\tilde X_0|Gx=Gx'\}$.

Since $\mathcal D_0$ is $\mathbb F_2$-invariant,  in order to prove the claim, it suffices to show that the action $\mathbb F_2\curvearrowright\mathcal D_0$ admits a non-null  measurable fundamental domain.
To see this, using that the action $\mathbb F_2\curvearrowright (\tilde Y,\tilde\nu)$ is ergodic, we choose a sequence $\{C_i\}_{i\geq 1}\subset [\tilde{\mathcal R}]$ such that $[y]_{\tilde{\mathcal R}}$ is the disjoint union of $\mathbb F_2C_i(y)$, $i\geq 1$, for almost every $y\in\tilde Y$ (see \cite[Remark 2.1]{IKT08}). Since $\mathcal D_0=\{(hy,h'y')|h,h'\in U, (y,y')\in\mathcal R\}$, one
 checks that $\mathcal F:=\{(hy,h'C_i(y))|h,h'\in U,y\in\tilde Y,i\geq 1\}$ is a non-null measurable fundamental domain for the action $\mathbb F_2\curvearrowright\mathcal D_0$.
\hfill$\square$

\end{document}